\documentclass[11pt,a4paper,reqno]{amsart} 



\usepackage{latexsym,eqnarray}
\usepackage{subfig}
\usepackage{latexsym}
\usepackage{verbatim}
\usepackage{epsfig}
\usepackage{rotating}
\usepackage{amssymb}
\usepackage[T1]{fontenc}
\usepackage{afterpage}
\usepackage{color}
\usepackage{dsfont}
\usepackage{url}
\usepackage[utf8]{inputenc}
\usepackage{amssymb,amsfonts,amsmath,stmaryrd,bbm}
\usepackage[plainpages=false,pdfpagelabels,colorlinks=true,citecolor=blue,hypertexnames=false]{hyperref}
\graphicspath{{Figures/}}

\usepackage{pst-node}
\usepackage{tikz-cd} 

%
\addtolength{\textheight}{-1mm} \topmargin5mm
\addtolength{\textwidth}{20mm} 
\hoffset -6mm

\newtheorem{Theorem}{Theorem}[section]
\newtheorem{Lemma}[Theorem]{Lemma}
\newtheorem{Proposition}[Theorem]{Proposition}
\newtheorem{Corollary}[Theorem]{Corollary}

\theoremstyle{definition}
\newtheorem{Definition}[Theorem]{Definition}
\newtheorem{Remark}[Theorem]{Remark}

\newtheorem{Example}[Theorem]{Example}



%
\newcommand{\N}{\mathbb{N}}

\newcommand{\Z}{\mathbb{Z}}




 %








\newcommand{\GD}{\mathbb{D}}

\newcommand{\beq}{\begin{equation}}
\newcommand{\eeq}{\end{equation}}


\def\emm#1,{{\em #1}}

\newcommand{\tC}{\widetilde C}

\newcommand{\ts}{\tilde s}
\newcommand{\tw}{\tilde w}
\newcommand{\td}{\tilde d}
\newcommand{\tib}{\tilde b}
\newcommand{\tu}{\tilde u}

%



\newcommand{\TL}{\operatorname{TL}}
\newcommand{\FC}{\operatorname{FC}}

\newcommand{\Zd}{\mathbb{Z}[\delta]}
\newcommand{\tn}{\blacktriangle}
\newcommand{\tb}{\vartriangle}

\newcommand{\lott}{\mathcal{L}_{\tn}^{\tb}}

\newcommand{\Snakes}{\operatorname{Snakes}}


\newcommand{\PLR}{\mathcal{P}^{LR}_k(\Omega)}

\newcommand{\LRPZ}{\operatorname{LRP-Z}}
\newcommand{\ALT}{\operatorname{ALT}}
\newcommand{\LP}{\operatorname{LP}}
\newcommand{\RP}{\operatorname{RP}}
\newcommand{\LRP}{\operatorname{LRP}}
\newcommand{\ZZ}{\operatorname{ZZ}}
\newcommand{\PZZ}{\operatorname{PZZ}}

\newcommand{\cru}{{\mathfrak{R}}}
\newcommand{\del}{\operatorname{del}}

\newcommand{\Hc}{{\check{H}}}
\newcommand{\dc}{{d_{\check{e}}}}








\begin{document}
\title[Heaps, decorated diagrams and $\widetilde{C}$-Temperley-Lieb algebra]{Heaps reduction, decorated diagrams, and the affine Temperley-Lieb algebra of type $C$}

\author[R. Biagioli]{Riccardo Biagioli}
\address{R. Biagioli : Dipartimento di Matematica, Universit\`a di Bologna, Piazza di Porta San Donato 5, 40126 Bologna, Italy}
\email{riccardo.biagioli2@unibo.it}

\author[G. Calussi]{Gabriele Calussi}
\address{G. Calussi, G. Fatabbi : Dipartimento di Matematica, Universit\`a degli Studi di Perugia, Via Vanvitelli 1, 06123 Perugia, Italy}
\email{giuliana.fatabbi@unipg.it, gabriele.calussi@gmail.com}

\author[G. Fatabbi]{Giuliana Fatabbi}

\thanks{The first author thanks GNSAGA of INdAM and the Universit\`a degli Studi di Perugia for their partial support while visiting the third author.}

\begin{abstract}
In this paper we propose a combinatorial framework to study a diagrammatic representation of the affine Temperley-Lieb algebra of type $\tC_n$ introduced by Ernst. In doing this, we define two procedures, a decoration algorithm on diagrams and a reduction algorithm on heaps of independent interest. Using this approach, an explicit algorithmic description of Ernst representation map is provided from which its  faithfulness can be deduced. We also give a construction of the inverse map.  
\end{abstract}
\date{\today}

\keywords{Coxeter groups, Temperley-Lieb algebras, heaps of pieces, fully commutative elements, diagrammatic representations}

\maketitle


\section*{Introduction}\label{sec:intro}

The Temperley-Lieb algebra is a  finite dimensional associative algebra which was first introduced by Temperley and Lieb in 1971 in \cite{TemperleyLieb} and since then it has been  playing a central role in several domains of mathematical physics, mainly in the statistical physics description of lattice models and in conformal field theory.
It has been natural for mathematicians to consider it as an abstract algebra over the complex field and to study its representation theory.
In~\cite{HKAUFFMAN1987395,Penrose71angularmomentum:} Kauffman  and Penrose viewed the Temperley-Lieb algebra  as a diagram
algebra, i.e., an associative algebra with a basis made of certain diagrams and a multiplication given by the application of local combinatorial rules to the diagrams.
On the other hand, in \cite{Jones1983}, Jones independently found the Temperley-Lieb algebra as an algebra defined by generators and relations and in \cite{JonesAnnals} he showed that it occurs naturally as a quotient of the Hecke algebra of type $A$. The realization of
the Temperley-Lieb algebra as a Hecke algebra quotient was generalized by  Graham in~\cite{Graham}, where he defined the so-called generalized Temperley-Lieb algebra $\TL(\Gamma)$ for any Coxeter system of type $\Gamma$, and showed that it admits a monomial basis indexed by the fully commutative elements (FC) of the corresponding Coxeter group. 
This gave rise to the problem of finding analogous diagrammatic descriptions of $\TL(\Gamma)$ for an arbitrary Coxeter system. In a series of paper, \cite{Green_TLBD}, \cite{Green_TLE} and \cite{Green_H}, Green defined a diagram calculus in finite Coxeter types $B, D, E$ and $H$. In the affine case, Fan and Green, in~\cite{FanGreen_Affine}, gave a realization of $\TL(\widetilde{A}_{n+1})$ as a diagram algebra on a cylinder, while Ernst, in~\cite{ErnstDiagramI,ErnstDiagramII}, interpreted $\TL(\tC_n)$ as an algebra of decorated diagrams. More precisely, Ernst introduced an infinite associative algebra denoted by $\GD(\widetilde{C}_{n})$, whose elements are the classical diagrams with decorated edges. He provided an explicit basis for $\GD(\widetilde{C}_{n})$, consisting of admissible diagrams, and defined an algebra homomorphism $\tilde{\theta}\colon \TL(\tC_n)\longrightarrow \GD(\widetilde{C}_{n})$ mapping any monomial basis element to an admissible diagram. One of his main results is that the map $\tilde{\theta}$ is a faithful representation.  

In the finite case, the diagrammatic representations of $\TL(\Gamma)$ previously mentioned are the faithful and this is proved by a counting argument. In type $\tC_n$, Ernst proof of injectivity requires many preliminary technical results and it is based on a classification of the so-called non-cancellable elements. In the conclusion of his paper, Ernst himself wonders whether a shorter proof of the injectivity exists. 

In this paper, motivated by his question, we propose a combinatorial framework to study  and give more insight on Ernst representation. Our approach is more algorithmic and it is based on a classification of the fully commutative elements in type $\tC_n$ given in \cite{BJN_FC}. We define several constructions on certain posets called heaps that encode the elements of the monomial basis of the $\TL(\tC_n)$ algebra. 
Our main result is an explicit combinatorial description of Ernst map $\tilde{\theta}$ from which its injectivity follows quite easily. This new characterization is based on two procedures, a reduction algorithm $\mathfrak{R}$ on heaps and a decoration algorithm on diagrams. Our technique can be briefly illustrated by the following schema 

\begin{center}
\begin{tikzcd}
\FC(\tC_n) \arrow{r}{\tilde{\theta}} 
  \dar[color=black]{\mathfrak{R}}
        & \GD(\widetilde{C}_{n})\\ 
  \FC(A_{n+1}) \rar[color=black]{\theta}
& \GD({A}_{n+1})\arrow[dashed]{u}{}
\end{tikzcd}
\end{center}
where the image through $\tilde{\theta}$ of the monomial basis element of $\TL(\tC_n)$ indexed by a FC heap $H$, can be obtained by adding loops and decorations to the diagram corresponding to the monomial basis element of $\TL(A_{n+1})$ indexed by $\mathfrak{R}(H)$.

The reduction algorithm on heaps can be generalized to other Coxeter types. In a forthcoming work~\cite{BFS}, we plan to extend such diagram calculus to the remaining classical affine types $\widetilde{B}$ and $\widetilde{D}$, by exploiting the algorithms and the other techniques developed here. 

This paper is organized as follows. In Section~\ref{sub:fullycomm}, we recall definitions and basic results on Coxeter groups, heaps theory, fully commutative elements and the classification of FC elements in type $\tC_n$ in terms of heaps given in \cite{BJN_FC}. In Sections~\ref{sec:TL-algebra} and \ref{sec:diagrammi}, we recall the construction of the algebra of decorated diagrams given by Ernst in~\cite{ErnstDiagramI}. In Section~\ref{sec:algoCA}, we associate to any FC element in $\tC_n$ a unique element in $A_{n+1}$ by means of a reduction algorithm on heaps. In Sections~\ref{sec:snakes_paths} and \ref{sec:algdec}, we distinguish a set of paths, called snakes paths, that cover an alternating heap and that will be used to define a decoration algorithm.
In Section~\ref{sec:inj}, all the introduced material is combined to exhibit an alternative proof of the injectivity of the map $\tilde{\theta}$. Finally, in Section~\ref{surjectivity}, given an admissible diagram $d$ in $\GD(\widetilde{C}_{n})$, we present an algorithm that recovers the unique element in $\FC(\tC_n)$ that indexes the inverse image of $d$ through $\tilde{\theta}$. 

\section{Fully commutative elements in Coxeter groups}
\label{sub:fullycomm}

Let $M$ be a square symmetric matrix indexed by a finite set $S$, satisfying $m_{ss}=1$ and, for $s\neq t$, $m_{st}=m_{ts}\in\{2,3,\ldots\}\cup\{\infty\}$. The {\em Coxeter group} $W$ associated with the  {\em Coxeter matrix} $M$ is defined by generators $S$ and relations $(st)^{m_{st}}=1$ if $m_{st}<\infty$. These relations can be rewritten more explicitly as $s^2=1$ for all $s$, and \[\underbrace{sts\cdots}_{m_{st}}  = \underbrace{tst\cdots}_{m_{st}},\]  where $m_{st}<\infty$, the latter being called \emph{braid relations}. When $m_{st}=2$, they are simply \emph{commutation relations} $st=ts$. 

The {\em Coxeter graph} $\Gamma$ associated to the {\em Coxeter system} $(W,S)$ is the graph with vertex set $S$ and, for each pair $\{s,t\}$ with $m_{st}\geq 3$, an edge between $s$ and $t$ labeled by $m_{st}$. When $m_{st}=3$ the edge is usually left unlabeled since this case occurs frequently. Therefore non adjacent vertices correspond precisely to commuting generators.
 
\begin{figure}[h]
\centering
\includegraphics[width=0.6\linewidth]{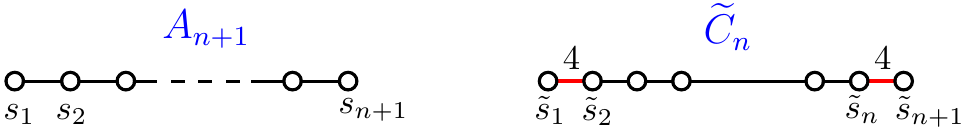}
\caption{Coxeter graphs of types $A_{n+1}$ and $\tC_{n}$.}\label{Ctilde-renamed}
\end{figure}

For $w\in W$, the {\em length} of $w$, denoted by $\ell(w)$, is the minimum length $l$ of any expression $w=s_1\cdots s_l$ with $s_i\in S$. These expressions of length $\ell(w)$ are called \emph{reduced}, and we denote by $\mathcal{R}(w)$ the set of all reduced expressions of $w$, which will be denoted with bold letters. A fundamental result in Coxeter group theory, sometimes called the {\em Matsumoto property}  states that any expression in $\mathcal{R}(w)$ can be obtained from any other one using only braid relations (see for instance~\cite{Humphreys}). The notion of full commutativity is a strengthening of this property.

\begin{Definition}
    \label{defi:FC}
 An element $w$ is \emph{fully commutative} (FC) if any reduced expression for $w$ can be obtained from any other one by using only commutation relations.
\end{Definition}

The following characterization of FC elements, originally due to Stembridge, is particularly useful in order to test whether a given element is FC or not.

\begin{Proposition} [Stembridge \cite{St1}, Prop. 2.1]
\label{prop:caracterisation_fullycom}
An element $w\in W$ is fully commutative if and only if for all $s,t$ such that $3\leq m_{st}<\infty$, there is no expression in $\mathcal{R}(w)$ that contains the factor $\underbrace{sts\cdots}_{m_{st}}$.
\end{Proposition}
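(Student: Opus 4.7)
The plan is to prove both implications separately. For both, the key is to identify an invariant preserved by commutation moves but broken by every braid move. The natural candidate is the subword projection: for any pair $s\neq t$ with $m_{st}\geq 3$, let $\pi_{s,t}(\mathbf{u})$ denote the subsequence of a word $\mathbf{u}$ obtained by deleting every generator different from $s$ and $t$. The first step I would carry out is to check that $\pi_{s,t}$ is invariant under any commutation move: swapping two adjacent generators $u,v$ with $m_{uv}=2$ cannot swap $s$ with $t$ since $m_{st}\geq 3$, so it either leaves all $\{s,t\}$-letters in place or transposes two neighbouring non-$\{s,t\}$ letters, both of which preserve $\pi_{s,t}$.

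For the ``only if'' direction I would argue by contrapositive: suppose some $\mathbf{w}\in\mathcal{R}(w)$ contains $\underbrace{sts\cdots}_{m_{st}}$ as a consecutive factor. Applying the braid relation in place yields another reduced expression $\mathbf{w}'\in\mathcal{R}(w)$ whose corresponding factor is $\underbrace{tst\cdots}_{m_{st}}$. Then $\pi_{s,t}(\mathbf{w})$ and $\pi_{s,t}(\mathbf{w}')$ agree outside the braid factor but differ on it, since the two alternating $\{s,t\}$-words of length $m_{st}$ are distinct. By the invariance of $\pi_{s,t}$ under commutations, $\mathbf{w}$ and $\mathbf{w}'$ cannot be connected by commutation moves alone, so $w$ is not FC.

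For the ``if'' direction I would invoke Matsumoto's property just recalled: any two $\mathbf{w},\mathbf{w}'\in\mathcal{R}(w)$ are linked by a chain $\mathbf{w}=\mathbf{w}_0,\ldots,\mathbf{w}_N=\mathbf{w}'$ of reduced expressions of $w$, each obtained from the previous one by a commutation or a braid move. If some step were a braid move applied to a pair $\{s,t\}$ with $3\leq m_{st}<\infty$, then the corresponding $\mathbf{w}_i$ would be a reduced expression of $w$ containing $\underbrace{sts\cdots}_{m_{st}}$ as a factor, contradicting the hypothesis. Hence every move in the chain is a commutation, which is exactly the full commutativity condition.

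The main obstacle is isolating the correct invariant in the ``only if'' direction: a naive count of occurrences of each generator suffices when $m_{st}$ is odd, but for even $m_{st}$ the braid move preserves every letter's multiplicity (witness the $C$-type relation $stst=tsts$, most relevant to the present paper), so one genuinely needs to track the ordered $\{s,t\}$-subword $\pi_{s,t}$. Once this invariant is in place, the rest of the argument reduces to a direct application of Matsumoto's theorem.
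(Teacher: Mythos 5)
Your proof is correct. Note that the paper does not actually prove this statement: it is quoted from Stembridge with a citation, so there is no in-text argument to compare against. Your argument is essentially the standard one — the ``if'' direction is a direct application of the Matsumoto/Tits property (using that every intermediate word in the chain is itself a reduced expression of $w$, hence braid-factor-free by hypothesis), and the ``only if'' direction rests on the observation that commutation moves preserve the $\{s,t\}$-subword $\pi_{s,t}$ for every non-commuting pair, while a long braid move alters it. This invariant is exactly the heap-theoretic content the paper recalls right after the proposition (the subheap $H_{\{s,t\}}$ is determined by the commutation class), so your proposal fits seamlessly with the framework used here; your closing remark correctly identifies why a cruder invariant such as letter multiplicities would fail for even $m_{st}$.
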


Therefore an element $w$ is FC if all reduced expressions avoid all braid relations; since, by definition, $\mathcal{R}(w)$ forms a commutation class, the concept of heap helps to capture the notion of full commutativity. We briefly describe a way to define the above mentioned heap and its relations with full commutativity, for more details see for instance~\cite{BJN_FC} and the references cited there.

Let $(W,S)$ be a Coxeter system with Coxeter graph $\Gamma$, and fix an expression $\mathbf{w}=s_{a_1}\cdots s_{a_l}$ with $s_{a_j} \in S$. Define a partial ordering $\prec$ on the index set $\{1,\ldots, l\}$ as follows: set $i\prec j$ if $i<j$ and $s_{a_i}$, $s_{a_j}$ do not commute, and extend it by transitivity. We denote this poset together with the labeling map $\epsilon:i\mapsto s_{a_i}$ by $H({\mathbf{w}})$ and we call it a {\em labeled heap of type $\Gamma$} or simply a {\em heap}. Heaps are well-defined up to commutation classes~\cite{ViennotHeaps}, that is, if ${\bf w}$ and ${\bf w'}$ are two reduced expressions for $w \in W$, that are in the same commutation class, then the corresponding labeled heaps are isomorphic. This means that there exists a poset isomorphism between them which preserves the labels. Therefore, when $w$ is FC  we can define $H(w):=H(\mathbf{w})$, where ${\bf w}$ is any reduced expression for $w$. Heaps of this form will be called {\em FC heaps}. Another important feature for FC heaps is that the linear extensions of $H(w)$ are in bjiection with the reduced expressions of $w$, see \cite[Proposition 2.2]{St1}. 

Given a heap $H$ and a subset $I\subset S$, we denote by $H_{I}$ the {\em subheap} induced by all elements of $H$ with labels in $I$ (see~\cite[\S 2]{ViennotHeaps}).
\smallskip

In the Hasse diagram of $H({\bf w})$, elements with the same labels are drawn in the same column. Moreover, as in \cite{ErnstDiagramI}, we draw heaps from top to bottom, namely the entries on the top of $H({\bf w})$, correspond to generators occurring on the left of ${\bf w}$.

\begin{Example}
Consider $w=s_1s_3s_2s_4s_3 \in \FC(A_4)$. Its heap is represented in Figure~\ref{ExampleA}, left. Its set of reduced expressions, obtained by listing the labels of each linear extension of $H(w)$ is $\mathcal{R}(w)=\{s_1s_3s_2s_4s_3, s_1s_3s_4s_2s_1, s_3s_1s_2s_4s_3, s_3s_1s_2s_4s_3, s_3s_4s_1s_2s_3 \}$. The subheap $H_I$ corresponding to the subset $I=\{s_2,s_3\}$ is represented in Figure~\ref{ExampleA}, right. Notice that the corresponding group element $w_I=s_3s_2s_3$ is not FC.
\end{Example}

\begin{figure}[!ht]
\begin{center}
\includegraphics[scale=0.7]{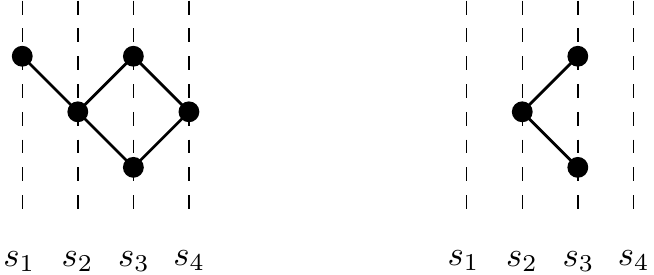}
\caption{A FC heap of type $A_4$ and its subheap $H_{\{s_2,s_3\}}$.}\label{ExampleA}  
\end{center}
\end{figure}

\begin{Definition}
\label{defi:alternating}
Let $(W,S)$ be a Coxeter system, $w \in \FC(W)$, and $H:=H(w)$. We say that $H$ is {\em alternating} if for each non commuting generators $s,t$ in $S$, the chain $H_{\{s,t\}}$ has alternating labels $s$ and $t$ from top to bottom.
\end{Definition}

Note that if $H(w)$ is alternating, then any reduced expression $\mathbf{w}$ of $w$ is \emph{alternating} in the sense that for each  non commuting generators  $s,t \in S$, the occurrences of $s$ alternate with those of $t$ in $\mathbf{w}$. In this case we also say that $w \in \FC(W)$ is alternating. 

For example, the heap on the left of Figure~\ref{ExampleA} is alternating: indeed, the subheaps corresponding to all pairs of noncommuting generators  $\{s_1,s_2\}$, $\{s_2,s_3\}$, $\{s_3,s_4\}$ are respectively the alternating chains $s_1s_2$, $s_3s_2s_3$, and $s_3s_4s_3$. Here we identify a subheap with the sequence of the labels of its vertices, obtained by reading such vertices from top to bottom. We will often use this identification in this paper.
\medskip

We now recall the descriptions of FC heaps corresponding to the Coxeter graphs of types $A_{n+1}$ and $\tC_n$ given for instance in~\cite{BJN_FC}. 

\begin{Theorem}[Classification of FC heaps in type $A_{n+1}$]\label{heaps-typeA}
A heap $H$ of type $A_{n+1}$ is fully commutative if and only if in $H$ 
\begin{itemize}
\item[(a)] There is at most one occurrence of $s_1$ ({\em resp.} $s_{n+1}$);
\item[(b)] For each $i \in \{1,\ldots, n+1\}$, the elements with labels $s_i, s_{i+1}$ form an alternating chain. 
\end{itemize}
Equivalently, each connected component of $H$ is alternating and starts and ends with a single vertex. 
\end{Theorem}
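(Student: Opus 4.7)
The plan is to apply Stembridge's criterion (Proposition~\ref{prop:caracterisation_fullycom}): since all Coxeter exponents in type $A_{n+1}$ equal $2$ or $3$, $w$ is fully commutative iff no reduced expression contains a factor $s_is_{i+1}s_i$ or $s_{i+1}s_is_{i+1}$. I will reformulate this purely in terms of the heap $H(w)$: such a braid factor appears in some reduced expression exactly when $H$ contains a \emph{braid triple}, meaning three elements $a\prec b\prec c$ with labels $s_is_js_i$ ($j=i\pm1$) whose open poset interval $(a,c):=\{d\in H : a\prec d\prec c\}$ equals $\{b\}$; such a triple can be brought to three consecutive positions in some linear extension, and conversely a braid factor yields such a triple. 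The task becomes: conditions (a)+(b) hold iff no braid triple exists in $H$.

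For the implication (a)+(b)$\Rightarrow$FC, suppose toward contradiction that a braid triple $(a,b,c)$ with labels $s_is_js_i$ exists. If $i\in\{1,n+1\}$, the endpoints $a,c$ are two occurrences of $s_1$ or $s_{n+1}$, contradicting (a). Otherwise $2\le i\le n$, and $s_i$ has a second non-commuting neighbor $s_k$ with $k=2i-j\in\{i-1,i+1\}\setminus\{j\}$. Applying (b) to $H_{\{s_k,s_i\}}$: the elements $a,c$ sit in this chain, and no other $s_i$ can lie in $(a,c)$ (that would contradict $(a,c)=\{b\}$), so $a,c$ are consecutive $s_i$'s in the subheap; alternation forces at least one $s_k$-labeled element $d$ strictly between them, which supplies an element of $(a,c)$ distinct from $b$, a contradiction.

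For the implication FC$\Rightarrow$(a)+(b), I would argue the contrapositive. Suppose $s_1$ occurs twice and choose two consecutive $s_1$'s $x\prec z$ in the chain $H_{\{s_1\}}$. Let $y$ be the topmost $s_2$ below $x$ in $H$ and $y'$ the bottommost $s_2$ above $z$. Since $s_2$ is the only non-commuting neighbor of $s_1$, every saturated chain out of $x$ must step to an $s_2$, so $y\in(x,z)$ and $y\preceq u$ for every $u\in(x,z)$; dually $u\preceq y'$. If $y=y'$, then $(x,z)=\{y\}$ and we obtain a braid factor $s_1s_2s_1$, contradicting FC. If $y\neq y'$ the same argument applied to the pair $(y,y')$ in $H_{\{s_2,s_3\}}$ produces a new pair of consecutive $s_2$'s needing an $s_3$ between, and iterating the descent along the Coxeter path eventually exhibits a braid triple. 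The proof of (b) is analogous: a non-alternating $H_{\{s_i,s_{i+1}\}}$ gives two consecutive same-label elements, and either an $s_{i-1}$ (or $s_{i+2}$) between them supplies a braid triple or the argument descends to the next pair of generators.

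The main obstacle will be the $\Rightarrow$ direction, specifically verifying that the iterated descent always terminates in a genuine braid triple with $(a,c)=\{b\}$ in the full heap, not merely a three-chain in some subheap; this requires careful tracking of topmost/bottommost chains in adjacent columns. The $\Leftarrow$ direction is a clean one-shot argument once braid triples are adopted as the normal form. The equivalent phrasing in terms of connected components then follows from the observation that in type $A_{n+1}$ each connected component of $H$ uses a contiguous label range, and the ``starts and ends with a single vertex'' condition encodes (a) applied to the extreme labels of that component.
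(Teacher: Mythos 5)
The paper does not prove this theorem: it is recalled from \cite{BJN_FC}, where the argument runs exactly along the lines you propose, namely Stembridge's criterion (Proposition~\ref{prop:caracterisation_fullycom}) recast as the absence in $H$ of a convex chain $a\prec b\prec c$ with labels $s_is_{i\pm1}s_i$ (your ``braid triple''), plus the absence of two same-labelled elements with empty open interval, which would contradict reducedness. Your argument is correct, and the termination worry you flag resolves itself: at each step of the descent the set of labels excluded from the open interval grows by one, the new interval is nonempty (else two equal letters would be adjacent in a reduced word), and its minimal elements must carry the next label along the Coxeter path, so upon reaching $s_1$ or $s_{n+1}$ the interval is forced to be a single vertex and a braid triple appears.
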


To classify FC heaps of type $\tC_n$ we need to introduce a couple of notations. A {\em peak} is a heap of the form:
$$P_{\rightarrow}(\ts_i):=H (\ts_i \ts_{i+1}\dots \ts_{n} \ts_{n+1} \ts_{n}\dots \ts_{i+1}\ts_i) \ \mbox{or} \ P_{\leftarrow}(\ts_i):=H(\ts_i \ts_{i-1}\dots \ts_{2} \ts_1 \ts_{2}\dots \ts_{i-1}\ts_i).$$ If $H$ is a heap  of type $\tC_n$ and $i\in\{2,\ldots,n\}$, then $H_{\{\leftarrow \ts_i\}}$ ({\em resp.} $H_{\{\rightarrow \ts_i\}}$) denotes the subheap of $H$ induced by the elements with labels $\{\ts_1,\ldots,\ts_{i-1},\ts_i\}$ ({\em resp.} $\{\ts_i,\ts_{i+1},\ldots,\ts_{n+1}\}$).

\begin{Definition}\label{def:famillesCtilde}  We define five families of heaps of type $\tC_n$.
\smallskip

\noindent \textbf{(ALT)} {\em Alternating}. $H\in (\ALT)$ if it is alternating in the sense of Definition~\ref{defi:alternating}.
\smallskip

\noindent \textbf{(ZZ)} {\em Zigzags}. $H\in $ (ZZ) if $H=H(\mathbf{\tw})$ where $\mathbf{\tw}$ is a finite factor of the infinite word $\left(\ts_1\ts_2\cdots \ts_{n} \ts_{n+1} \ts_{n}\cdots \ts_2\ts_1\right)^\infty$ such that $|H_{s_i}| \geq 3$ for at least one $i\in\{2,\dots,n\}$.
\smallskip

 \noindent \textbf{(LP)}  {\em Left-Peaks}. $H\in (\LP)$ if there exists $j_\ell \in\{2,\dots,n\}$ such that:
\begin{enumerate}
\item  $H_{\{\leftarrow \ts_{j_\ell}\}}=P_{\leftarrow}(\ts_{j_\ell})$;
\item There is no $\ts_{j{_\ell}+1}$-element between the two $\ts_{j_\ell}$-elements;
\item  $H_{\{{\ts}_{j_\ell}\rightarrow \}}$ is alternating when one $\ts_{j_\ell}$-element is deleted from it.
\end{enumerate}
\smallskip

\noindent \textbf{(RP)} {\em Right-Peaks}. $H\in $ (RP) if there exists ${j_r} \in\{2,\dots,n\}$ such that:
\begin{enumerate}
\item $H_{\{\ts_{j_r} \rightarrow\}}= P_{\rightarrow}(\ts_{j_r})$;
\item There is no $\ts_{{j_r}-1}$-element between the two $\ts_{j_r}$-elements; 
\item $H_{\{\leftarrow {\ts}_{{j_r}}\}}$ is alternating when one $\ts_{j_r}$-element is deleted.
\end{enumerate}
\smallskip

\noindent \textbf{(LRP)} {\em Left-Right-Peaks}. $H\in $ (LRP) if there exist $2\leq {j_\ell}<{j_r} \leq n$ such that:
\begin{enumerate}
\item  $\LP(1), \LP(2), \RP(1), \RP(2)$ hold;
\item  $H_{\{\ts_{{j_\ell}},\ldots,\ts_{{j_r}}\}}$ is  alternating when both a $\ts_{j_\ell}$- and a $\ts_{j_r}$-element are deleted.
\end{enumerate}
\end{Definition}

\begin{figure}[t]
\includegraphics[width=0.9\textwidth]{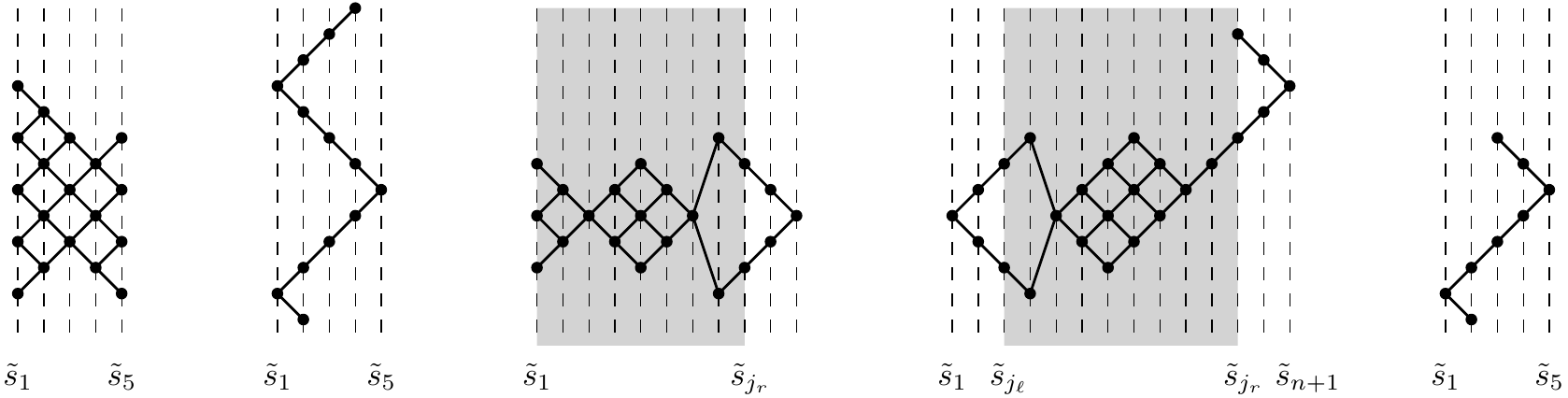}
\caption{Examples  of heaps of type  $\tC_n$ in families $(\ALT), (\ZZ), (\RP), (\LRP)$ and $(\PZZ)$.}
\end{figure}

\begin{Remark}
\label{rem:family}
The condition $|H_{s_i}|\geq 3$ in the definition of (ZZ) is only there to ensure that the families are disjoint. In families $(\LP), (\RP), (\LRP),$ the indices $j_\ell$ and $j_r$ are {\em uniquely determined}. 
\end{Remark}

We can now state the classification theorem in type $\tC_n$, see~\cite[Theorem 3.4]{BJN_FC}.

\begin{Theorem}[Classification of FC heaps of type $\tC_n$]\label{theo:affineCfamilles}
A heap of type $\tC_n$ is  fully commutative if and only if it belongs to one of the five families $(\ALT), (\ZZ), (\LP), (\RP), (\LRP)$.
\end{Theorem}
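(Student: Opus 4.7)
The plan is to apply Stembridge's criterion (Proposition~\ref{prop:caracterisation_fullycom}) in heap language: $H$ is FC if and only if for each non-commuting pair $s,t \in S$, the chain $H_{\{s,t\}}$ contains no \emph{convex} sub-chain of length $m_{st}$, meaning no run of $m_{st}$ successive elements of it such that no other element of $H$ lies strictly between two successive ones in heap order. Under this translation, the five families can be analyzed structurally, and I would prove the two implications separately.

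For the backward direction, I would check that each family consists of FC heaps. For (ALT), the alternating hypothesis bounds the length of every $H_{\{s,t\}}$ sufficiently (together with the implicit boundary length bound $\le 3$ at $\{\ts_1,\ts_2\}$ and $\{\ts_n,\ts_{n+1}\}$, which must be imposed to avoid the $m=4$ braid) to rule out convex braid chains. For (ZZ), (LP), (RP), (LRP), the explicit word-forms prescribed by the definitions yield reduced expressions that can be inspected directly: each braid window of length $m_{st}$ is ruled out by the protective structure of the zigzag word or of the peaks $P_{\leftarrow}(\ts_{j_\ell})$, $P_{\rightarrow}(\ts_{j_r})$.

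For the forward direction, let $H$ be FC and not alternating in the sense of Definition~\ref{defi:alternating}. Then there exists a non-commuting pair $\ts_i, \ts_{i+1}$ such that $H_{\{\ts_i, \ts_{i+1}\}}$ has two consecutive same-label elements, say both $\ts_i$. The FC property forces a protective generator between them in the heap order, one that does not commute with $\ts_i$ and is not $\ts_{i+1}$; hence the label must be $\ts_{i-1}$. Iterating this observation yields a cascade of protective generators walking toward a boundary of the Coxeter path. Since $\tC_n$ has exactly two boundary vertices $\ts_1, \ts_{n+1}$ carrying $m=4$ edges, the cascade terminates at $\ts_1$ (matching (LP)), at $\ts_{n+1}$ (matching (RP)), or at both (matching (LRP), or (ZZ) when the cascade oscillates enough to force $|H_{s_i}| \ge 3$ for some $i \in \{2,\ldots,n\}$, per Remark~\ref{rem:family}); the alternating portions complementary to the peak then satisfy the remaining clauses of each family's definition.

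The main obstacle is the forward direction: one must carefully bookkeep the protective cascade and show that it identifies uniquely with one of the four non-alternating families. In particular, the proof must establish that a peak cannot terminate in the interior of the graph and that the indices $j_\ell, j_r$ are uniquely determined as stated in Remark~\ref{rem:family}. The delicate recursive point is that once a first protective $\ts_{i-1}$ is inserted, the doubled $\ts_{i-1}$-configuration it creates may itself demand a further protective $\ts_{i-2}$, and so on, forcing the entire peak $P_{\leftarrow}(\ts_{j_\ell})$ rather than a partial one — this iterated argument, together with a careful separation of the (ZZ) case from the peak cases using the multiplicity condition $|H_{s_i}|\geq 3$, is where the combinatorial bulk of the classification resides.
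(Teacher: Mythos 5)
First, a point of comparison: the paper does not prove this theorem at all --- it is imported verbatim from \cite[Theorem 3.4]{BJN_FC} --- so there is no internal proof to measure your attempt against. Your overall strategy (Stembridge's criterion recast as the absence of convex braid chains in heaps, plus a ``protective cascade'' for the hard direction) is in the spirit of the cited source, but your sketch contains one concretely false step and leaves the substantive direction undone.

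The false step is in the backward direction for $(\ALT)$. There is no ``implicit boundary length bound $\le 3$'' on $H_{\{\ts_1,\ts_2\}}$ or $H_{\{\ts_n,\ts_{n+1}\}}$, and imposing one would break the theorem: the alternating element of $\FC(\tC_5)$ in Figure~\ref{Example-N2} has four occurrences each of $\ts_1$ and $\ts_2$, so $|H_{\{\ts_1,\ts_2\}}|=8$. The alternating hypothesis bounds no chain length whatsoever. What it actually does is supply separators from the \emph{adjacent} pair: any two consecutive occurrences of $\ts_2$ must have an $\ts_3$ between them (alternation at $\{\ts_2,\ts_3\}$), and that $\ts_3$ lies strictly between them in the heap order, so no candidate $\ts_1\ts_2\ts_1\ts_2$ chain can be convex; likewise a convex $\ts_i\ts_{i+1}\ts_i$ at an interior pair is destroyed by the $\ts_{i-1}$ forced between the two $\ts_i$'s. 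This mechanism is exactly the type-$\tC$-specific content of the easy direction, and your account of it is wrong rather than merely terse. For the forward direction, your cascade is pointed the right way --- two consecutive $\ts_i$'s with no $\ts_{i+1}$ between them force $\ts_{i-1}$'s between them, and the reason the peak bottoms out with a single $\ts_1$ is that $m_{\ts_1\ts_2}=4$ tolerates a convex chain of length $3$ --- but you explicitly defer the bookkeeping that shows the cascade forces two copies of each intermediate generator (hence the full $P_{\leftarrow}(\ts_{j_\ell})$), that the complement is alternating, that $j_\ell,j_r$ are unique, and that the $(\ZZ)$ family is correctly split off. Since you yourself locate ``the combinatorial bulk'' there, the proposal is a plausible plan for the classification, not a proof of it.
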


For our aims, it will be useful to give a different partition of the set of FC elements. This is why we consider an additional subfamily of $(\LRP)$. 

\begin{Definition} {\bf (PZZ)} {\em Pseudo Zigzags}. $H\in (\PZZ)$ if $H=H(\mathbf{\tw})$ where $\mathbf{\tw}$ is a finite factor of the infinite word $\left(\ts_1 \ts_2\cdots \ts_{n}\ts_{n+1}\ts_{n}\cdots \ts_2\ts_1\right)^\infty$ such that $|H_{\ts_i}| < 3$ for all $i\in\{2,\dots,n\}$ and contains the two factors $\ts_2\ts_1\ts_2$ and $\ts_{n}\ts_{n+1}\ts_{n}$.
\end{Definition}

In the rest of the paper, we will say that $\tw \in \FC(\tC_n)$ is in $(\ALT), (\ZZ), (\RP)$, $(\LRP)$ or $(\PZZ)$ if and only if the corresponding heap $H(\tw)$ is. 
\section{Temperley-Lieb algebras}\label{sec:TL-algebra}

In this section we recall the presentation of the classical Temperley-Lieb algebra and of the generalized Temperley-Lieb algebra of type $\tC_n$, specializing the result of Green \cite[Proposition 2.6]{Green-Star} that holds for any Coxeter type.  
\medskip

The {\em Temperley-Lieb algebra of type $A_{n+1}$}, denoted TL$(A_{n+1})$, is the unital $\Z[\delta]$-algebra generated by $\{b_1,\ldots, b_{n+1}\}$ with defining relations:
\begin{enumerate}
\item[(a1)] $b_i^2=\delta b_i$ for all $i$;
\item[(a2)] $b_ib_j=b_jb_i$ if $|i-j|>1$;
\item[(a3)] $b_ib_jb_i=b_i$ if $|i-j|=1$.
\end{enumerate}

The {\em Temperley-Lieb algebra of type $\tC_n$}, denoted TL$(\tC_n)$, is the unital $\Z[\delta]$-algebra generated by $\{\tib_1,\ldots, \tib_{n+1}\}$ with defining relations:
\begin{enumerate}
\item[(c1)] $\tib_i^2=\delta \tib_i$ for all $i \in \{1,2,\ldots, n+1\}$;
\item[(c2)] $\tib_i\tib_j=\tib_j\tib_i$ if $|i-j|>1$ and $i,j \in \{1,2,\ldots, n+1\}$;
\item[(c3)] $\tib_i\tib_j\tib_i=\tib_i$ if $|i-j|=1$ and $i,j \in \{2,\ldots, n\}$;
\item[(c4)] $\tib_i\tib_j\tib_i\tib_j=2\tib_i\tib_j$ if $\{i,j\}=\{1,2\}$ or $\{i,j\}=\{n,n+1\}$.
\end{enumerate}

For $w=s_{i_1}\cdots s_{i_k} \in \FC({A_{n+1}})$ and $\tilde{w} =\ts_{i_1}\cdots \ts_{i_k} \in \FC({\tC_n})$, we define 
$$b_w:=b_{i_1}\cdots b_{i_k}, \quad \mbox{and} \quad  \tib_{\tw}:=\tib_{i_1}\cdots \tib_{i_k}$$
the associated elements in TL$(A_{n+1})$ and $\TL(\tC_n)$. Since $w$ and $\tw$ are FC, $b_w$ and $\tib_{\tw}$ do not depend on the chosen reduced expressions. The sets $\{b_w \mid w \in \FC(A_{n+1})\}$ and $\{\tib_{\tw} \mid \tw \in \FC(\tC_{n})\}$ form $\Z[\delta]$-bases for TL$(A_{n+1})$ and $\TL(\tC_n)$  respectively, called the {\em monomial bases}. 

It is well-known that TL$(A_{n+1})$ has a faithful representation in terms of a diagram algebra. The main result of Ernst Ph.D. thesis is a generalization of such algebra of diagrams that turns out to be a faithful representation of the algebra TL$(\tC_{n+1})$.

\section{Undecorated, decorated and admissible diagrams}\label{sec:diagrammi}

In this section, following Sections 3--5 of \cite{ErnstDiagramI} and Section 4 of \cite{ErnstDiagramII}, we give a survey on diagram algebras, undecorated and decorated diagrams, and define the main object of this paper, the algebra $\GD(\tC_n)$. 

\subsection{Undecorated diagrams.}
The {\em standard $k$-box} is a rectangle with $2k$ marks points, called
    nodes (or vertices) labeled as in Figure~\ref{standard_box}. We will refer to the top of the rectangle as the north
face and to the bottom as the south face.  If we  think of the standard $k$-box as being embedded in the plane with the origin in the  lower left corner of the rectangle then we think  each node $i$ (respectively, $i'$) in  the point $(i, 1)$ (respectively, $(i, 0)$).

\begin{figure}[h]
\includegraphics[width=0.8\textwidth]{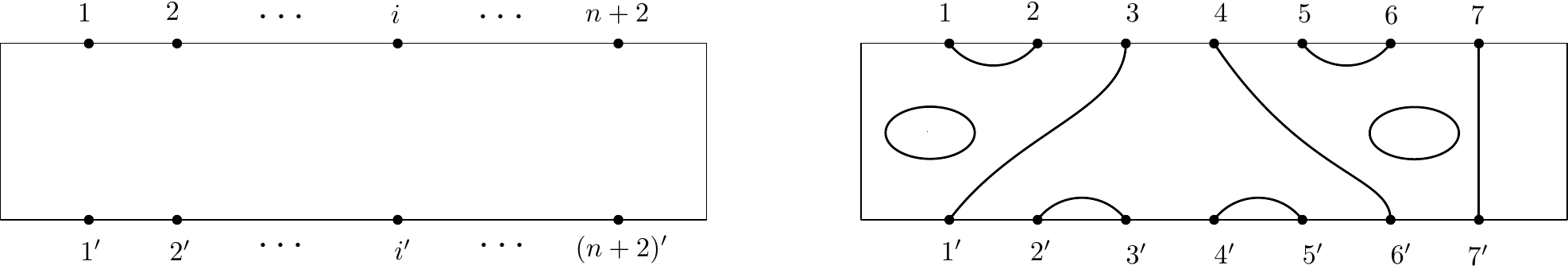}
\caption{A standard $(n+2)$-box and a pseudo 7-diagram.}\label{standard_box}
\end{figure}
A {\em concrete pseudo $k$-diagram} consists of a finite number of disjoint plane curves,  called edges,
embedded in the standard $k$-box. A plane curve  either meets  the
box transversely and its endpoints are the nodes of the box  or it is disjoint from the box.  An edge may be a closed  curve (isotopic to circles) in which case we refer to it as a {\em loop}. If an  edge is a curve  that joins node $i$ in the north face to node $j'$ in the
south face, then it  is called a {\em propagating edge} from $i$ to $j'.$  If a propagating edge joins $i$ to $i'$ then we will call it a
 {\em vertical propagating edge}. If an edge is not propagating, loop edge or otherwise, it will be called {\em non-propagating}. Given a diagram $d$, we denote by ${\bf a}(d)$ the number of non-propagating edges in the north face of $d$.
A {\em pseudo $k$-diagram} is defined to be an equivalence class of equivalent concrete pseudo $k$-diagrams modulo the isotopy equivalence and we denote the set of pseudo $k$-diagrams by $T_k(\emptyset).$ 

Let $R$ be a commutative ring with $1$. Following \cite{Jones}, we define the associative algebra $\mathcal{P} _k(\emptyset)$ as the free $R$-module with basis $T_k(\emptyset)$, and  product $d'd$ of $d', d \in T_k(\emptyset)$ as the diagram obtained by placing $d'$ on top of $d$, so that node $i' $ of $d'$ coincides with node $i$ of $d$, rescaling vertically by a factor of $1/2$ and then applying the appropriate translation to recover a standard $k$-box.  
 
 \begin{Definition}
 Let $\GD(A_{n+1})$ be the associative $\mathbb{Z}[\delta ]$-algebra equal to the quotient of $ \mathcal{P}_{n+2}(\emptyset)$ by the relation depicted in Figure~\ref{loop_A}.
\begin{figure}[h]
\includegraphics[width=0.12\textwidth]{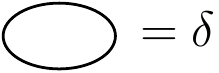}
\caption{The defining relation of $\GD(A_{n+1})$.}\label{loop_A}
\end{figure}
\end{Definition}

The following theorem collects three classical results. 
\begin{Theorem}\label{prop:thetaA} \
\begin{itemize}
\item[(a)] $\GD(A_{n+1})$ is equal to the  $\mathbb{Z}[\delta]$-module  having  the loop-free diagram   of $T_{n+2}(\emptyset)$ as a basis;
\smallskip

\item[(b)] $\GD(A_{n+1})$ is generated as $\mathbb{Z}[\delta]$-algebra by the set of {\em simple diagrams} $\{d_1,\ldots,d_{n+1}\}$, see Figure~\ref{diagrammi-semplici-1}, subjected to the  relations (a1)-(a2)-(a3) given in \S~\ref{sec:TL-algebra}; \smallskip
 
\item[(c)] the map $\theta: \TL(A_{n+1})\longrightarrow \GD(A_{n+1})$ determined by $b_i \longmapsto d_i$ is a well defined $\Z[\delta]$-algebra isomorphism that sends the monomial basis of $\TL(A_{n+1})$ to the loop-free diagrams basis of $\GD(A_{n+1})$. 
\end{itemize}
\end{Theorem}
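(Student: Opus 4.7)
The plan is to establish the three parts in order, with (c) following from (a) and (b) essentially by a rank/cardinality argument. Throughout I will use the fact that $\mathcal{P}_{n+2}(\emptyset)$ is the free $\Z[\delta]$-module on $T_{n+2}(\emptyset)$ with diagram multiplication, so $\GD(A_{n+1})$ differs from it only by imposing the loop relation of Figure~\ref{loop_A}.

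For part (a), the first step is to note that every concrete pseudo $(n+2)$-diagram $d$ decomposes, up to isotopy, as a loop-free diagram $d^\circ$ together with a number $\ell(d) \in \N$ of disjoint closed loops. Repeated application of the defining relation then gives $d = \delta^{\ell(d)} d^\circ$ in $\GD(A_{n+1})$, so the loop-free classes span. For linear independence, I would observe that the loop relation is ``local'' in the sense that it only replaces a closed loop by the scalar $\delta$ and does not identify two distinct loop-free diagrams; formally, the map $T_{n+2}(\emptyset) \to \Z[\delta]\cdot\{\text{loop-free diagrams}\}$ sending $d \mapsto \delta^{\ell(d)} d^\circ$ factors through the quotient and provides a $\Z[\delta]$-linear splitting, so the loop-free diagrams are a $\Z[\delta]$-basis.

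For part (b), the first step is a direct diagrammatic check that $d_1, \ldots, d_{n+1}$ satisfy (a1)--(a3): stacking $d_i$ on itself produces exactly one closed loop and a copy of $d_i$, giving $d_i^2 = \delta d_i$; when $|i-j| > 1$ the non-propagating edges of $d_i$ and $d_j$ do not interact, yielding commutation; when $|i-j| = 1$ the triple product $d_i d_j d_i$ reduces by a direct picture to $d_i$. The next step is to prove that every loop-free pseudo $(n+2)$-diagram is a product of the $d_i$. Here I would induct on the number $\ab(d)$ of non-propagating north edges: if $\ab(d) = 0$ then $d$ is the identity, and otherwise one can find a non-propagating north edge joining two adjacent nodes $i, i+1$ (an ``innermost'' such edge), peel $d_i$ off the top and obtain a loop-free diagram $d'$ with $\ab(d') < \ab(d)$ and $d = d_i \cdot d'$. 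Iterating yields an expression $d = d_{i_1} \cdots d_{i_k}$.

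For part (c), the map $\theta$ is well defined as a $\Z[\delta]$-algebra homomorphism because the $d_i$ satisfy (a1)--(a3) by the previous step, and it is surjective by (a) and (b). To see that $\theta$ sends the monomial basis to loop-free diagrams bijectively, I would argue that for any $w \in \FC(A_{n+1})$ the image $\theta(b_w) = d_{i_1} \cdots d_{i_k}$ is loop-free: a loop could only be created by a braid-type cancellation $d_i d_j d_i \to d_i$ followed by a stacking $d_i^2$, but the FC hypothesis on $w$ excludes any such subword configuration from arising in a reduced expression, hence no loop is produced. This yields a well-defined map $\FC(A_{n+1}) \to \{\text{loop-free diagrams}\}$, surjective by (b), and a counting argument using that both sets are enumerated by the Catalan number $C_{n+2}$ promotes it to a bijection; injectivity of $\theta$ then follows because a surjective $\Z[\delta]$-linear map between free modules of the same finite rank that maps one basis to another is an isomorphism.

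The main obstacle is the last paragraph of (b) and its use in (c): showing that the peeling procedure is well defined and that the resulting word encodes an FC element whose image under $\theta$ reproduces the original loop-free diagram. One must verify that an innermost non-propagating north edge always exists (unless $d$ is the identity), and control that no braid pattern $d_i d_j d_i d_j$ (with $|i-j|=1$) appears in the produced word, so that the correspondence with $\FC(A_{n+1})$ is clean and the cardinality match indeed upgrades to a genuine bijection between the two natural bases.
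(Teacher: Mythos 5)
The paper does not prove this theorem at all: it is introduced with ``The following theorem collects three classical results'' and cited to the literature, so your proposal can only be judged against the standard arguments. Your part (a) (split off the loops, note the relation only rescales by $\delta$, and exhibit the linear splitting $d\mapsto\delta^{\ell(d)}d^{\circ}$) is the usual proof and is fine, and the architecture of (c) (loop-freeness of $\theta(b_w)$ for $w$ fully commutative, plus the Catalan count on both sides) is also the classical route and is consistent with how the paper later argues in Proposition~\ref{prop:E} that loops can only arise from a factor $d_1d_3\cdots d_{n+1}\cdot d_2\cdots d_n\cdot d_1\cdots d_{n+1}$, which cannot occur in type $A$.

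The genuine gap is the inductive step in part (b), which you yourself flag as the main obstacle: the claim that a loop-free diagram $d$ with ${\bf a}(d)>0$ can be written as $d=d_i\cdot d'$ with $d'$ loop-free and ${\bf a}(d')<{\bf a}(d)$ is \emph{false}. Take $n+2=4$ and $d=d_1d_2d_3$, the diagram with edges $\{1,2\}$, $\{3,1'\}$, $\{4,2'\}$, $\{3',4'\}$. Its only north cup is $(1,2)$, so ${\bf a}(d)=1$ and the only generator that can be peeled off the top is $d_1$. A direct check of all planar candidates shows that the unique diagram $d'$ with $d_1d'=d$ and no loop created is $d'=d_2d_3$, with edges $\{2,3\}$, $\{1,1'\}$, $\{4,2'\}$, $\{3',4'\}$, so ${\bf a}(d')=1={\bf a}(d)$; in particular no $d'$ with ${\bf a}(d')=0$ works, since $d_1\cdot\mathrm{id}=d_1\neq d$. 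The reason is structural: undoing the cap $(i',(i+1)')$ of $d_i$ forces you to cut one edge of $d$ and reattach its two ends to the nodes $i,i+1$ of $d'$, and unless the cut edge is a south cup of $d$ lying in the same face as the north cup $(i,i+1)$, the statistic ${\bf a}$ does not drop. So the induction on ${\bf a}(d)$ does not terminate as stated. The generation statement is of course true, but it needs a different organization --- for instance the explicit Jones normal form $(d_{i_1}d_{i_1-1}\cdots d_{j_1})\cdots(d_{i_p}d_{i_p-1}\cdots d_{j_p})$ read off directly from the cup/cap structure, or an induction on a finer statistic such as the number of propagating edges together with the total width of the cups. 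Once (b) is repaired, your (c) goes through essentially as written.
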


\begin{figure}[h]
\includegraphics[width=0.4\textwidth]{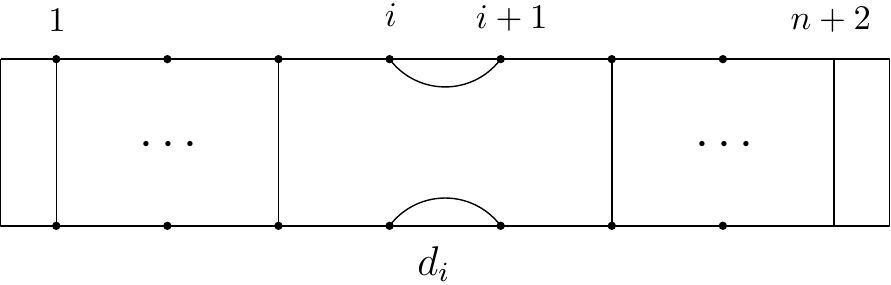}
\caption{The simple diagram $d_i$.}\label{diagrammi-semplici-1}
\end{figure}

\begin{Remark}\label{rem:TLA}
More precisely, point (c) above tells us that if $s_{i_1}\cdots s_{i_k}$ is a reduced expression of $w \in \FC({A_{n+1}})$, then the diagram 
\begin{equation}\label{def:dw}
d_w:=\theta(b_w)=d_{i_1}\cdots d_{i_k}
\end{equation}
is a loop-free diagram in $\GD(A_{n+1})$. Viceversa, for each loop-free diagram $d \in \GD(A_{n+1})$ there exists a unique $w \in \FC(A_{n+1})$ such that $d_w=d$.
\end{Remark}

\subsection{Decorated diagrams}
 Ernst, in~\cite{ErnstDiagramI}, introduced an algebra of diagrams which plays the role of $\GD(A_{n+1})$ in the case of the Coxeter system of type $\tC_{n}$. In what follows, we recall its definition and we highlight  particular aspects which will be important in our analysis. We leave the interested reader to consult~\cite[\S3.2]{ErnstDiagramI} for further studies.  
\smallskip

We fix the set $\Omega:=\{\bullet, \blacktriangle, \circ, \vartriangle\}$ and we will refer to each element of $\Omega$ as a {\em decoration}.  We will adorn the edges of a pseudo diagram $d \in T_{k}(\emptyset)$ with the elements of the free monoid $\Omega^{*}$. Sometimes we split such sequences into non empty {\em blocks} ${\bf b}=x_1x_2 \cdots x_r$, $x_i \in \Omega$. If $e$ is a non-loop edge, we adopt  the convention that we can read off the sequence of decorations as we traverse $e$, from $i$ to $j'$
if $e$ is propagating,  or from $i$ to $j$ (resp. $i'$ to $j'$) with $i < j$ (resp. $i'< j'$) if $e$ is non-propagating.
In particular, when we say the first (resp. last) decoration on a edge $e$, we mean the first (resp. last) one we read. If $e$ is a loop edge we consider two sequences of decorations equivalent if one can be changed into the other or its opposite by any cyclic permutation. 
Now we list the rules that we use to decorate the edges of $d$.

If ${\bf a}(d)=0$, then we do not adorn any of the edges of $d$. 
\smallskip

If ${\bf a}(d)\neq 0$, we might adorn the edges of $d$ with the elements of   $\Omega^{*}$ in such a way that all decorated edges can be deformed so as to take the decorations $\bullet, \blacktriangle$ to the left wall of the standard box and the decorations $\circ, \vartriangle$ to the right wall simultaneously without crossing any other edge. Furthermore, if $e$ is a non-propagating edge of $d$, the sequence of its decorations has to be considered as a unique block. 

\begin{figure}[h]
\includegraphics[width=0.8\textwidth]{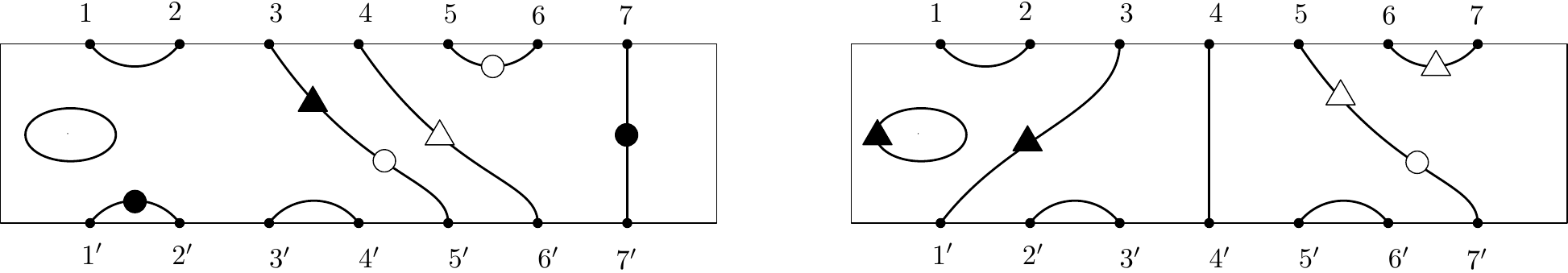}
\caption{The left diagram has not allowable decorations; the right one is in $T^{LR}_7(\Omega)$ but it  is not admissible.}\label{decorated-Ex1}
\end{figure}

If ${\bf a}(d)=1$, in addition, we need to introduce the notion of {\em vertical position} of a decoration which simply is its $y$-value  in the $xy$-plane. Even if the notions of block and vertical position make sense for any diagram with decorated edges, they are relevant only in this case. In fact, if  ${\bf a}(d)=1$ and $e$ is propagating, then we allow $e$ to be decorated subject to the following constraints:
\begin{itemize}
  \item[(a)] All decorations on propagating edges must have vertical position lower (resp. higher) than the vertical position of decorations occurring on the (unique) non-propagating edge in the north face (resp. south face) of the diagram.
  \item[(b)]  If $\mathbf{b}$ is a block of decorations occurring on $e$, then no other decorations occurring on any
other propagating edges may have vertical position in the range of vertical positions
that $\mathbf{b}$ occupies.
  \item[(c)]  If $\mathbf{b}_i$ and $\mathbf{b}_{i+1}$ are two adjacent blocks occurring on $e$, then they may be conjoined to
form a larger block only if the previous requirements are not violated.
\end{itemize}

\begin{figure}[h]
\includegraphics[width=0.8\textwidth]{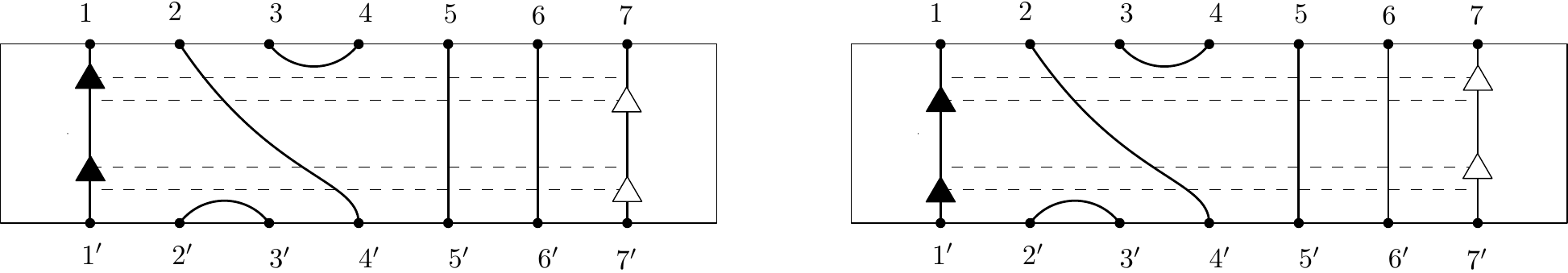}
\caption{Case ${\bf a}(d)=1$: dashed lines denote vertical positions. In these two different diagrams each decoration corresponds to a single block.}\label{decorated-Ex2}
\end{figure}

If ${\bf a}(d)> 1$, and $e$ is propagating, the sequence of its decorations has to be considered as a unique block.

A {\em concrete LR-decorated} pseudo $k$-diagram is any concrete $k$-diagram decorated using the above rules, (see conditions (D0)--(D4) in~\cite{ErnstDiagramI}). Moreover, we let a {\em LR-decorated} pseudo $k$-diagram an equivalence class 
of a concrete LR-decorated pseudo $k$-diagrams with respect to 
{\em $\Omega-$equivalence}, namely, if we can isotopically deform one diagram into the other in a way that any intermediate diagram is also a concrete LR-decorated pseudo $k$-diagram, and that the relative vertical positions of the blocks are preserved. We denote the set of LR-decorated pseudo diagrams by $T^{LR}_k (\Omega).$
Now define $\mathcal{P}^{LR}_k(\Omega)$ to be the free $\mathbb{Z}[\delta]$-module having $T_k^{LR}(\Omega)$ as a basis. We define a multiplication in $\PLR$ on the basis elements as follows and then we extend it bilinearly. Let $d', d \in T^{LR}_k(\Omega)$, then $d'd$ is obtained by concatenating $d'$ and $d$, by conjoining adjacent blocks and maintaining 
$\Omega-$equivalence. In~\cite[\S 3]{ErnstDiagramI}, Ernst proved that $\mathcal{P}^{LR}_k(\Omega)$ with this multiplication is a well-defined infinite dimensional $\mathbb{Z}[\delta]$-algebra.

We can notice that if  ${\bf a}(d')>1$, then for any $d \in T^{LR}_k(\Omega)$  ${\bf a}(d'd)>1 $, and so in any edge of $d'd$ we can consider the obtained sequence of decorations as a unique block. On the other hand if ${\bf a}(d')={\bf a}(d)=1$ and ${\bf a}(d'd)= 1$, we follow (b) and (c) given above to conjoin adjacent blocks in $d'd$.

Finally let $\widehat{\mathcal{P}}^{LR}_k(\Omega)$ be the associative $\Zd$-algebra equal to the quotient of $\PLR$ by the  relations depicted in Figure~\ref{C-Relations}, where the decorations on the edges represent adjacent decorations within the same block.
An example of product is given in Figure~\ref{decorated-Ex3}.

\begin{figure}[h]
\includegraphics[width=0.8\textwidth]{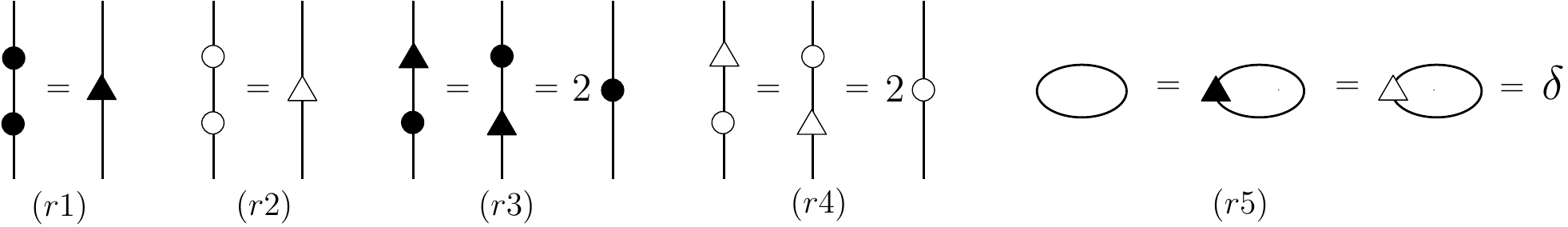}
\caption{The defining relations of $\widehat{\mathcal{P}}^{LR}_k(\Omega)$.}\label{C-Relations}
\end{figure}

\begin{figure}[h]
\includegraphics[width=0.8\textwidth]{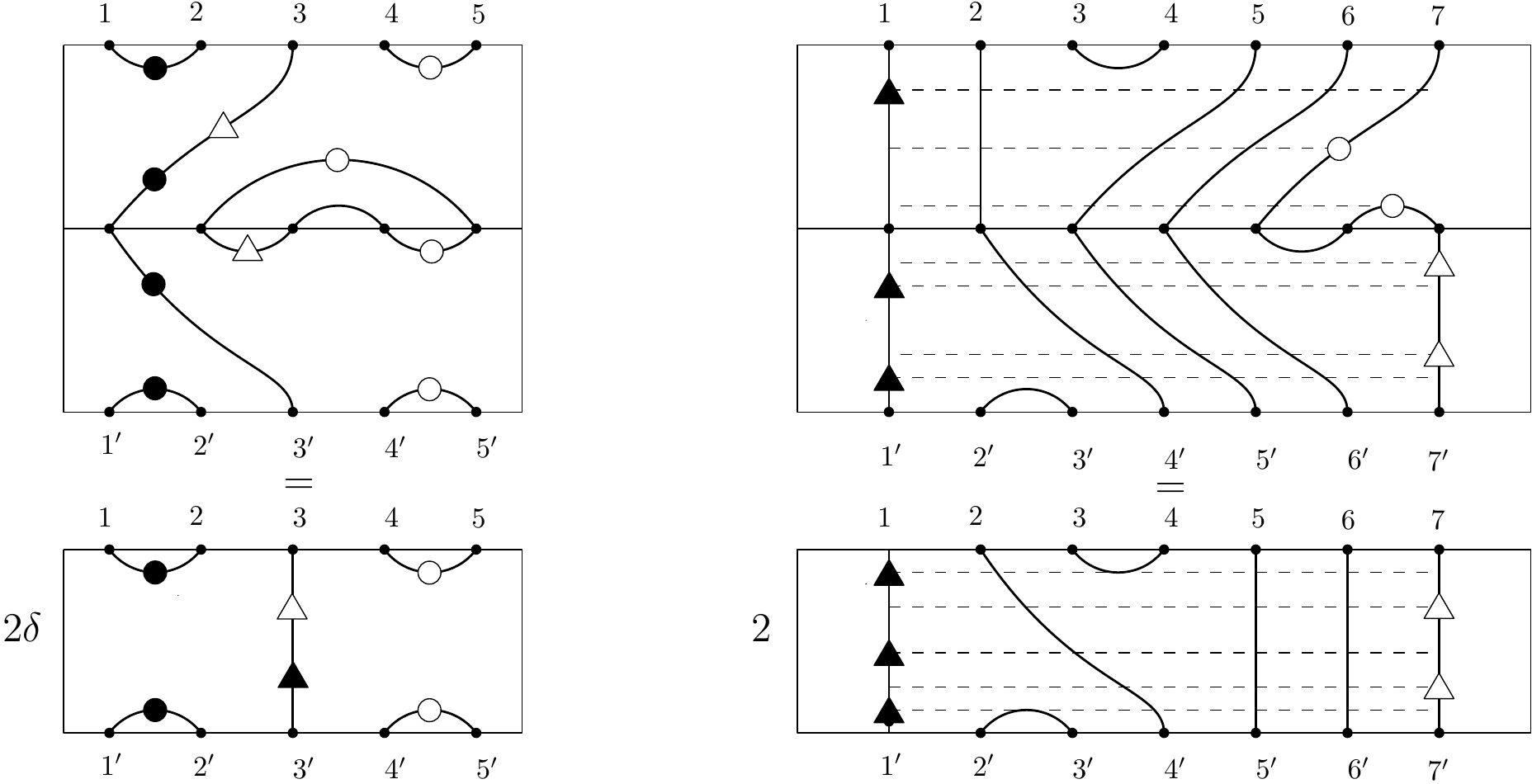}
\caption{Multiplications of decorated diagrams in $\widehat{\mathcal{P}}^{LR}_5(\Omega)$.}\label{decorated-Ex3}
\end{figure}

Ernst, in~\cite{ErnstDiagramI}, considers the following subalgebra of $\widehat{\mathcal{P}}^{LR}_{n+2}(\Omega)$.

\begin{Definition} We will denote by $\mathbb{D}(\widetilde{C}_n)$ the $\mathbb{Z}[\delta]$-subalgebra of $\widehat{\mathcal{P}}^{LR}_{n+2}(\Omega)$ generated by the {\em simple diagrams} $\td_1,\td_2,\dots,\td_n, \td_{n+1}$, where $\td_i=d_i$ for all $i\in \{2,\ldots,n\}$, and $\td_1$ and $\td_{n+1}$ are depicted in Figure~\ref{diagrammi-semplici-C}.
\end{Definition}

\begin{figure}[h]
\centering
\includegraphics[width=0.9\linewidth]{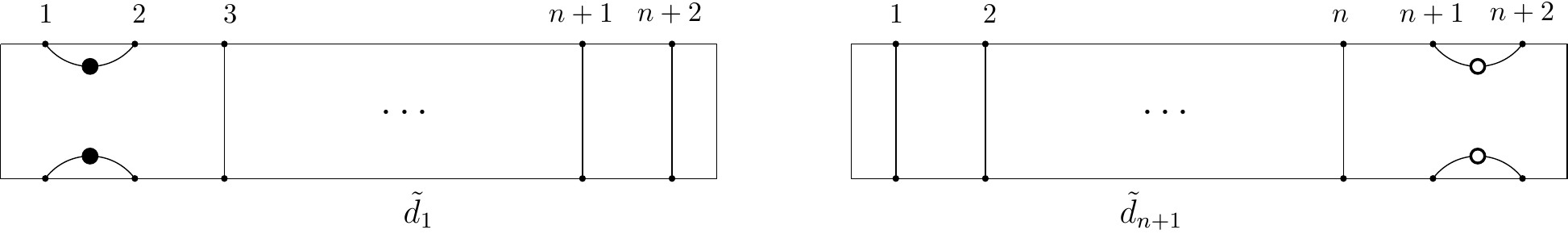}
\caption{Decorated simple diagrams.} \label{diagrammi-semplici-C}
\end{figure}

We notice that the decorated simple diagrams satisfy the relations (c1)--(c4) described in Section~\ref{sec:TL-algebra}. Ernst gave an explicit $\Z[\delta]$-basis of $\mathbb{D}(\widetilde{C}_n)$, consisting of the so-called admissible diagrams whose definition is stated below, and he proved the following results which is the analogous of Proposition~\ref{prop:thetaA} for $\tC_n$.

\begin{Theorem}[Ernst~\cite{ErnstDiagramI, ErnstDiagramII}]\label{prop:thetaC} \
\begin{itemize}
\item[(a)] $\GD(\tC_{n})$ is equal to the $\mathbb{Z}[\delta]$-module having the admissible diagrams as a basis;
\item[(b)] the map $\tilde{\theta}: \TL(\tC_{n})\longrightarrow \GD(\tC_{n})$ determined by $\tib_i \longmapsto \td_i$ is a well defined $\Z[\delta]$-algebra isomorphism that sends the monomial basis of $\TL(\tC_{n})$ to the basis of admissible diagrams of $\GD(\tC_{n})$.
\end{itemize}
\end{Theorem}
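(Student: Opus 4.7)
The plan is to prove (a) and (b) simultaneously by constructing an explicit combinatorial map $\Psi: \FC(\tC_n) \to \GD(\tC_n)$ whose image is exactly the set of admissible diagrams, and showing that $\tilde\theta(\tib_{\tw}) = \Psi(\tw)$ for every $\tw \in \FC(\tC_n)$. Well-definedness of $\tilde\theta$ is handled first by verifying in $\GD(\tC_n)$ that the decorated simple diagrams $\td_i$ satisfy the relations (c1)--(c4) of $\TL(\tC_n)$: the ones involving only indices in $\{2,\ldots,n\}$ follow from the analogous relations in $\GD(A_{n+1})$ since $\td_i = d_i$ there, while relation (c1) at $i\in\{1,n+1\}$ and relation (c4) require a short diagrammatic check using the loop-killing rules of Figure~\ref{C-Relations} and the definitions in Figure~\ref{diagrammi-semplici-C}.

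The core step is the explicit description of $\tilde\theta$ on the monomial basis. For each $\tw \in \FC(\tC_n)$, the classification theorem (Theorem~\ref{theo:affineCfamilles}) places $H(\tw)$ in exactly one of the five disjoint families $(\ALT), (\ZZ), (\LP), (\RP), (\LRP)$, with the further subfamily $(\PZZ)$ isolated inside $(\LRP)$. I then apply the reduction algorithm $\mathfrak{R}$ to produce an FC heap of type $A_{n+1}$, hence an element $w' \in \FC(A_{n+1})$, and take the undecorated diagram $d_{w'}$ provided by Theorem~\ref{prop:thetaA}. Finally, I add loops and decorate the edges of $d_{w'}$ according to the snake-paths algorithm developed in Sections~\ref{sec:snakes_paths}--\ref{sec:algdec}: the family of $H(\tw)$ and the peak indices $(j_\ell, j_r)$ prescribe how many loops appear and which blocks of decorations lie on which edges. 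This defines $\Psi(\tw)$, and a direct check shows $\Psi(\tw)$ is admissible. The identification $\tilde\theta(\tib_{\tw}) = \Psi(\tw)$ is then proved by induction on $\ell(\tw)$, verifying that left-multiplication by $\td_i$ on $\Psi(\tw)$ matches the diagram produced by the algorithm on $\tib_i \tw$, with an extra factor of $\delta$ when $\ell(\tib_i \tw) < \ell(\tw)$.

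Once this explicit description is in hand, injectivity of $\tilde\theta$ on the monomial basis becomes transparent: from $\Psi(\tw)$ one reads off the family and the peak data $(j_\ell, j_r)$ via the loops and decoration pattern, and the underlying loop-free residual recovers $w' = \mathfrak{R}(H(\tw)) \in \FC(A_{n+1})$ by the type $A$ case (Remark~\ref{rem:TLA}); inverting $\mathfrak{R}$ family by family returns $\tw$. Since distinct admissible diagrams are linearly independent in $\GD(\tC_n)$ by construction of the quotient algebra, the set $\{\tilde\theta(\tib_{\tw})\}_{\tw \in \FC(\tC_n)}$ is $\Z[\delta]$-linearly independent. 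Surjectivity of $\Psi$ onto the admissible diagrams is provided by the explicit inverse algorithm of Section~\ref{surjectivity}: given an admissible $d$, its loop and decoration data determine a family and peak indices, while the loop-free residual yields $w' \in \FC(A_{n+1})$, and $\mathfrak{R}^{-1}$ returns a unique $\tw$. Together these facts prove (a), that the admissible diagrams form a $\Z[\delta]$-basis of $\GD(\tC_n)$, and (b), that $\tilde\theta$ is an isomorphism sending the monomial basis to this basis.

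The main obstacle is the inductive verification of $\tilde\theta(\tib_{\tw}) = \Psi(\tw)$. Each of the six strata $(\ALT), (\ZZ), (\LP), (\RP), (\LRP), (\PZZ)$ has its own combinatorics governing how decorations travel along snake paths, and the transitions between families — triggered when left-multiplication by $\td_i$ creates or destroys a peak, forces a switch of peak indices, or produces a loop via relation (c4) — require the most delicate case analysis. The reduction algorithm $\mathfrak{R}$ and the snake-paths framework are precisely designed to organize these cases uniformly, bypassing Ernst's much longer classification of non-cancellable elements.
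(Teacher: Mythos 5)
First, a point of orientation: the paper does not prove Theorem~\ref{prop:thetaC} at all --- it is imported from Ernst's work, and the paper's own contribution is an alternative proof of the \emph{injectivity} of $\tilde{\theta}$ on the monomial basis (Section~\ref{sec:inj}) together with a constructive inverse on admissible diagrams (Section~\ref{surjectivity}). Your proposal is, in effect, an outline of that entire program (reduction algorithm, snake paths, decoration algorithm, family-by-family injectivity, constructive surjectivity), so you are reconstructing the paper's machinery rather than the missing proof of Ernst's theorem. Within that program there is one genuine gap and one significant methodological divergence.

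The gap is your assertion that ``distinct admissible diagrams are linearly independent in $\GD(\tC_n)$ by construction of the quotient algebra.'' This is precisely the hard half of part (a), and it is not automatic: $\widehat{\mathcal{P}}^{LR}_{n+2}(\Omega)$ is the quotient of the free module on $T^{LR}_{n+2}(\Omega)$ by the local rewriting relations of Figure~\ref{C-Relations}, and a priori the quotient could identify, or create linear dependences among, decorated diagrams that are not visibly related by a single relation. One needs a termination-and-confluence (normal form) argument for this rewriting system --- this is exactly what Ernst supplies and what the present paper silently relies on when it passes from ``the diagrams $\mathtt{dec}(\tw)$ are pairwise distinct'' to ``they are independent.'' Without it, neither (a) nor the ``basis of admissible diagrams'' clause of (b) follows from your construction. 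Separately, your plan to prove $\tilde{\theta}(\tib_{\tw})=\Psi(\tw)$ by induction on $\ell(\tw)$, tracking left-multiplication by $\td_i$, is not what the paper does and is considerably more delicate than you acknowledge: multiplying by $\td_i$ can move $\tw$ between all six strata, change the peak indices $(j_\ell,j_r)$, and create loops, so controlling $\Psi$ under these transitions essentially amounts to reproving the classification. The paper instead fixes a reduced expression read off row-by-row from the heap, factors it as ${\bf\tw}_1\cdot{\bf\tw}_\gamma\cdot{\bf\tw}_2$ along a snake path, and computes the product of simple diagrams in one shot (cf.\ the computation around \eqref{eq:calcoli}); this avoids the transition analysis entirely and is the route you should take.
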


\begin{figure}[h]
	\centering
	\includegraphics[width=0.8\linewidth]{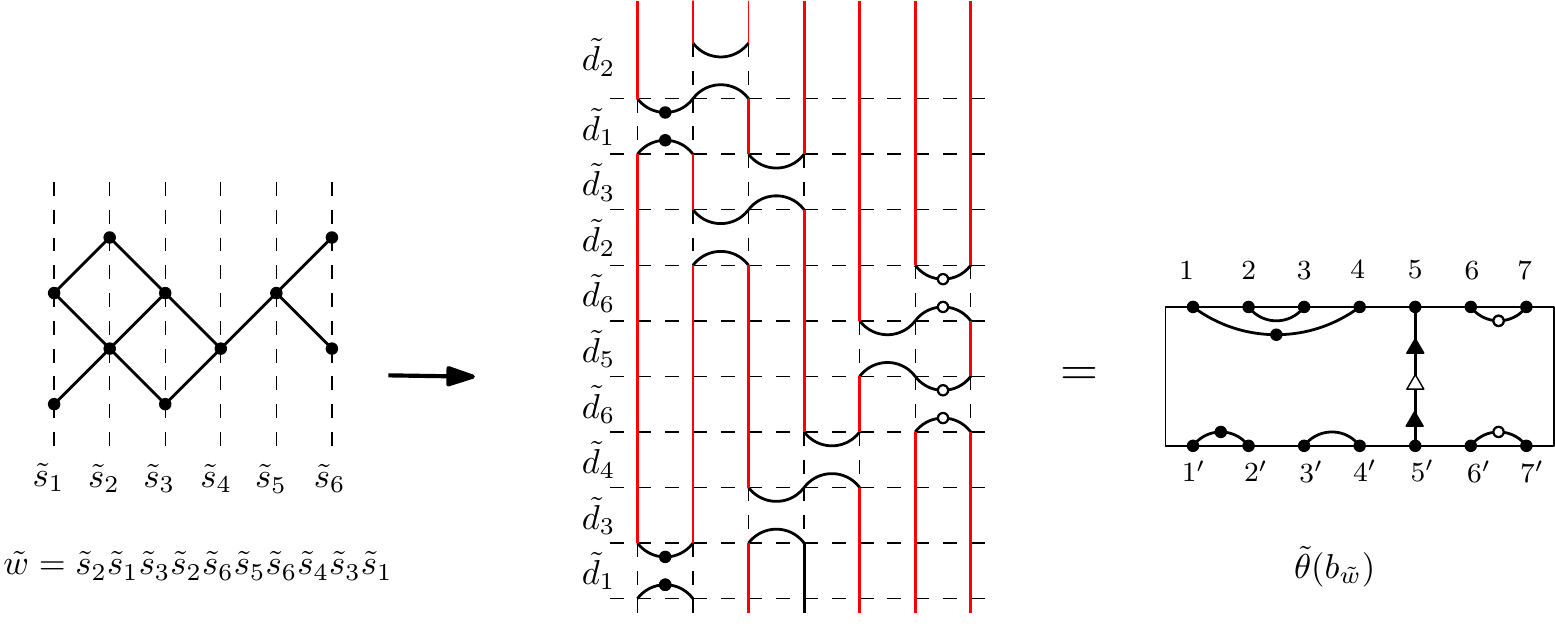}
	\caption{An application of the map $\tilde{\theta}$.}
	\label{Ernst-map}
\end{figure}

In Section \ref{sec:inj}, we will provide an algorithmic proof of the injectivity of the map $\tilde{\theta}$, giving a positive answer to a question raised by Ernst in~\cite[\S 6]{ErnstDiagramII}. More precisely, we will show that if $\tib_{\tw}$ and $\tib_{\tw'}$ are two distinct elements of the monomial basis of $\TL(\tC_{n})$, then $\tilde{\theta}(\tib_{\tw})$ and $\tilde{\theta}(\tib_{\tw'})$ are two distinct and independent diagrams in $\GD(\tC_{n})$. In~\cite{ErnstDiagramI}, Ernst proved that $\tilde{\theta}(\tib_{\tw})$ is an admissible diagram, but we will not use this feature to prove injectivity. Admissible diagrams will only be used in Section~\ref{surjectivity}, where we analyze the surjectivity of the map $\tilde{\theta}$.  

\subsection{Admissible diagrams}\label{sec:admissible}

\par We will now recall the definition of admissible diagrams of type $\tC_n$, or simply admissible diagrams. 

\begin{Definition}\label{def:ammissibili}
Let $d\in T_{n+2}^{LR}(\Omega)$, $d$  
is an {\em admissible diagram}  if it satisfies:
\begin{itemize}
  \item[(A1)] The only loops that may appear are equivalent to the one in Figure \ref{fig:loop}.
  
  \begin{figure}[h]
	\centering
	\includegraphics[width=0.055\linewidth]{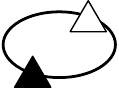}
	\caption{An admissible loop.}
	\label{fig:loop}
\end{figure}

\item[(A2)] Assume ${\bf a}(d)>1$ and let $e$ be the edge connected to node $1.$ If $e$ is not connected to node $1'$, then it is decorated and the first decoration is a $\bullet$. If $e$ is connected to $1'$, then exactly one of the following three conditions are met:
  \begin{enumerate}
      \item [(a)] $e$ is undecorated.
      \item [(b)] $e$ is decorated by a single $\tn$.
      \item [(c)] $e$ is decorated by a single block of decorations consisting of an alternating sequence of black  and white decorations such that the first decoration is  a  $\bullet$.
  \end{enumerate}  
   We have analogous restrictions for nodes $1',n+ 2,$ and $(n+ 2)'$, where we replace first with last for nodes  $1'$ and $(n+ 2)'$ and black decorations are replaced with white decorations for nodes $n+ 2$ and $(n+ 2)'$.
\item[(A3)]
   Assume ${\bf a}(d) = 1.$  Then the western end of $d$ is equal to one of the diagrams in  Figure \ref{fig:west}, where $u\in \{\emptyset,\tn \}$  and the other rectangles represent a sequence of blocks (possibly empty) such that each block is a single $\tn.$  Moreover, if $d$ is the diagram in Figure~\ref{fig:west} (b), then no more decorations occur on $d.$  Also, the occurrences of the $\bullet$ decorations  on the propagating edges in  Figure~\ref{fig:west} (c)-(d)-(e) have the highest (respectively, lowest) relative vertical position of all decorations occurring on any propagating edge. We have  analogous restrictions for the eastern end of $d,$  where the black decorations are replaced with white decorations.
\item[(A4)]No other $\bullet$ or $\circ$ decorations appear on $d$ other than those required in (A2) and (A3).
\end{itemize}
\end{Definition}

\begin{figure}[h]
	\centering
	\includegraphics[width=0.8\linewidth]{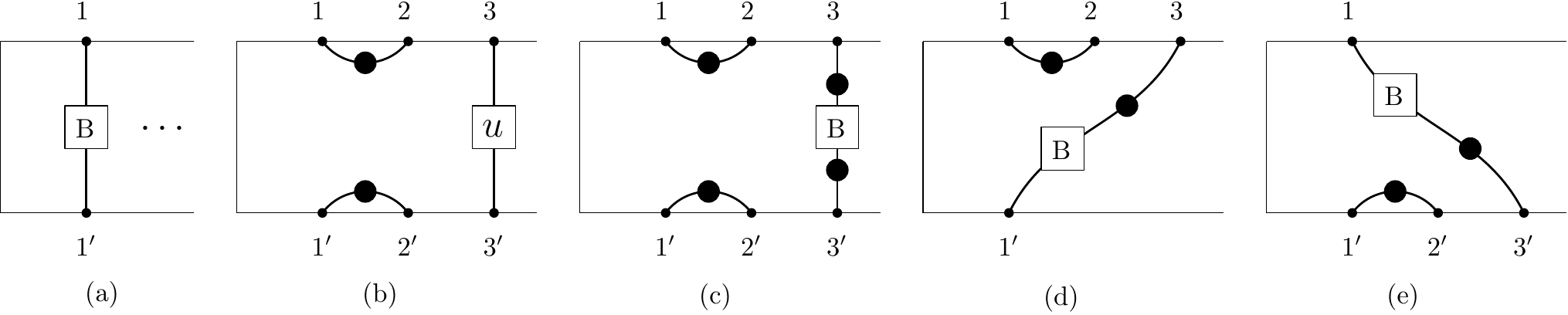}
	\caption{The western end of an admissible diagram.}
	\label{fig:west}
\end{figure}


\section{Reduction algorithm on heaps}\label{sec:algoCA}

In this section, starting from a FC element $\tw$ in $\tC_n$, we define a FC element $w$ in $A_{n+1}$ 
and we describe a reduction algorithm which transforms the  heap $H(\tw)$ into the heap $H(w)$.
 
\begin{Definition}\label{def:51}
Let $\tw \in \FC(\tC_n)$ and  $\ts_{i_1}\cdots \ts_{i_k}$ a  reduced expression for it.  We denote by $d(\tw)$ the following diagram of $T_{n+2}(\emptyset)$
$$d(\tw):=d_{i_1} \cdots d_{i_k}.$$
\end{Definition}

The element  $d(\tw)$ is well defined, in fact, any two reduced expressions of $\tw$ differ for a sequence of commutation relations and $\ts_i$ commutes with $\ts_j$ if and only if $d_{i}$ commutes with $d_{j}$. Moreover, $d(\tw)$ is a loop-free diagram in  $T_{n+2}(\emptyset)$ with a finite number of loops,  which allows us  to give  the following definition.

\begin{Definition}\label{w-primo} 
We denote by $\alpha:  \FC(\tC_n) \longrightarrow \N$ and  
$ \tau : \FC(\tC_n) \longrightarrow \FC(A_{n+1})$,   the maps that associate to any $\tw\in \FC(\tC_{n})$,  
 the  number of loops $\alpha(\tw)$ and the FC element $\tau(\tw)$ in $A_{n+1},$  defined by the decomposition      \begin{equation}\label{def:ad}
      d(\tw)=\delta^{\alpha(\tw)} d_{\tau(\tw)}
  \end{equation}
of the image of $d(\tw)$ in the quotient algebra $\GD(A_{n+1})$. Here and in the following we denote with the same symbol both the concrete diagram in $T_{n+2}(\emptyset)$ and its image in $\GD(A_{n+1})$. 
\end{Definition}

More precisely, if $\ts_{i_1}\cdots \ts_{i_k}$ is a reduced expression for $\tw$, $\tau(\tw)$ can be obtained as follows. Consider the product of the simple diagrams $d_{i_1} \cdots d_{i_k}$,  apply the relations (a1)-(a2)-(a3), and obtain a reduced expression for $d(\tw)$ of type $\delta^\alpha d_{j_1}\cdots d_{j_h}$. The diagram  $d_{j_1}\cdots d_{j_h}$ is loop-free and so by Remark~\ref{rem:TLA} there exists a unique $w \in \FC(A_{n+1})$ such that $d_{w} = d_{j_1}\cdots d_{j_h},$ hence  $\tau(\tw)=w=s_{j_1}\cdots s_{j_h}$.

\begin{Example}
The element $\tw=
\ts_2\ts_6 \ts_1\ts_3\ts_5\ts_7 \ts_2\ts_4\ts_6 \ts_1\ts_3\ts_5\ts_7 \ts_2\ts_4\ts_6 \ts_1\ts_3\ts_5\ts_7 \ts_4 \in \tC_6$ is alternating and its heap is depicted in Figure~\ref{Fig-delta-A} left.
A simple computation shows that $d(\tw)=\delta^2 d_2d_6d_1d_3d_5d_7d_4$, hence $\alpha(\tw)=2$ and $\tau(\tw)=w=s_2s_6s_1s_3s_5s_7s_4 \in A_7$ whose heap is represented in  Figure~\ref{Fig-delta-A} right.

\begin{figure}[h]
	\centering
	\includegraphics[width=0.8\linewidth]{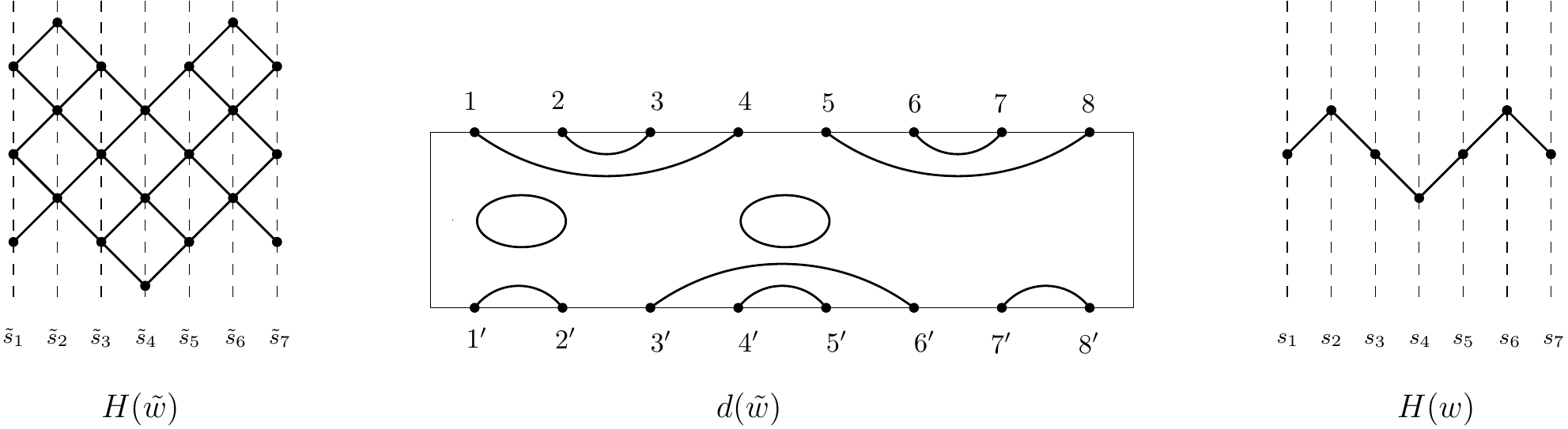}
	\caption{}
	\label{Fig-delta-A}
\end{figure}
\end{Example}

\begin{Remark}\label{step0}\
\begin{itemize}
\item[(1)] The diagram  $d_{\tau(\tw)}$ can also be obtained by computing the diagram $\tilde{\theta}(b_{\tw})=\td_{i_1} \cdots \td_{i_k}$ and then by removing all the loops and all the decorations on the other edges, where $\ts_{i_1} \cdots \ts_{i_k}$ is a reduced expression for $\tw$ (see also the map $r$ in \cite[\S 3.4]{ErnstDiagramI}).
\smallskip

\item[(2)] The map $\tau$ is surjective. In fact, if $s_{i_1} \cdots s_{i_k}$ is a reduced expression for $w \in \FC(A_{n+1})$, by Theorem~\ref{prop:caracterisation_fullycom}, $w$ is alternating and in $s_{i_1} \cdots s_{i_k}$ there is at most one occurrence of $s_1$ and $s_{n+1}$. This implies that the word $\ts_{i_1} \cdots \ts_{i_k}$ is a reduced expression of an alternating element $\tw\in \FC(\tC_n)$. Hence 
$$d(\tw)= d_{i_1} \cdots d_{i_k}=d_w$$ 
from which $\tau(\tw)=w$. On the other hand, if $\tw \in \FC(\tC_n)$ is alternating and its heap contains at most one occurrence of $\ts_1$ and $\ts_{n+1}$, then by the same argument the heaps $H(\tw)$ and $H(w)$ are isomorphic.  
\end{itemize}
\end{Remark}

In the following we give a general procedure that takes $H(\tw)$, erase some of its edges and vertices and produces $H(\tau(\tw))$. First, we need some definitions. 

If $H:=H(\tw)$ is the heap of $\tw \in \FC(\tC_n)$, we set $d(H):=d(\tw) \in  T_{n+2}(\emptyset)$. We also need a slightly more general definition. 
If $H$ is a heap of type $\tC_{n}$, not necessarily FC, we consider a reduced expression $\ts_{i_1} \cdots \ts_{i_k}$ in the commutation class of the element $\tw \in \tC_{n}$ represented by $H$. We define 
$$d(H):= d_{i_1} \cdots d_{i_k}$$ its associated diagram in $T_{n+2}(\emptyset)$. Note that if $H$ is not FC, the diagram $d(H)$ depends on $H$ and not on $\tw$. This is illustrated in Figure~\ref{Heap-diagram-H}, where $H_1$ and $H_2$ are heaps corresponding to the reduced expressions $\ts_3\ts_4\ts_3$ and $\ts_4\ts_3\ts_4$ of the same element $\tilde{w}\not\in \FC(\tC_5)$. These expressions belong to distinct commutation classes of $\tw$: the two corresponding diagrams are depicted in red, the heaps in black.   

\begin{figure}[h]
\centering
\includegraphics[width=0.7\linewidth]{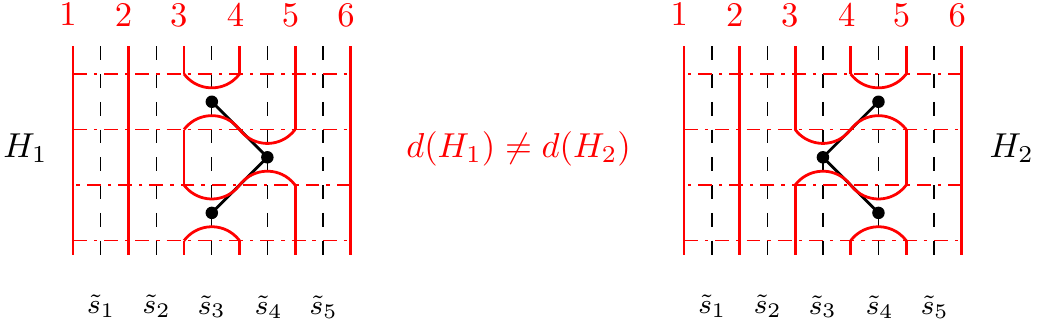}
\caption{Two non FC heaps corresponding to the same $\tilde{w} \in \tC_{5}$.} \label{Heap-diagram-H}
\end{figure}

\begin{Definition}
   Let $H$ be a heap of type $\tC_n$. We call {\em fork}, any convex subheap of $H$ of the form $H(\ts_i\ts_{i + 1}\ts_i)$ for $i \in \{1,\ldots, n\}$ or  $H(\ts_i\ts_{i -1}\ts_i)$ for $i\in \{2, \ldots, n+1\}$. 
\end{Definition}

In the sequel, if there is no ambiguity, we will denote forks simply writing the sequence of the labels $\ts_i\ts_{i \pm 1}\ts_i$. We are now ready to define a process that allows us to eliminate a fork $f$ from the Hasse diagram of a heap $H$ and to give rise to a new heap $H\setminus f$. 

\begin{Definition}[Fork elimination]\label{forkelimination}
 Let $H$ be a heap of type $\tC_n$ containing a fork $f=\ts_i\ts_{i \pm 1}\ts_i$. We denote by $H\setminus f$ the heap obtained by applying the following procedure to $H$, called {\em fork elimination} :
\begin{enumerate}
	\item[(F1)] Cancel the middle node $\ts_{i \pm 1}$ of the fork and all the edges in $H$ incident to it;
	\item[(F2)] Identify the two nodes $\ts_i$ of the fork;
	\item[(F3)] If $f=\ts_i\ts_{i + 1}\ts_i$ with $i <n$ (resp. $\ts_i\ts_{i - 1}\ts_i$ with $i>2$) and in column $i + 2$ (resp. $i-2$)  two consecutive unconnected nodes appear, then identify them.
\end{enumerate}
\end{Definition}

\begin{Definition}[Reduction algorithm]\label{reductionalgorithm}
Let $H$ be a heap of type $\tC_n$. We call {\em reduction algorithm} the following procedure:
\begin{enumerate}
	\item[(R1)]  Apply iteratively fork elimination to $H$ until no fork appears.
	\item[(R2)]  Denote the obtained heap by $\cru(H)$, where the nodes in column $i$ are labeled by $s_i$, for all $1 \leq i \leq n+1$.
	\item[(R3)]  Denote by $\del(H)$ the total number of identified nodes in the steps (F3).  	
\end{enumerate}
\end{Definition}

In Figure~\ref{Example-N2}, we start with the heap of the alternating element $\tw\in \FC(\widetilde{C}_5)$, whose a reduced expression is $\ts_{2}\ts_{1}\ts_{3}\ts_{2}\ts_{4}\ts_{6}\ts_{1}\ts_{3}\ts_{5}\ts_{2}\ts_{4}\ts_{6}\ts_{1}\ts_{3}\ts_{5}\ts_{2}\ts_{1}$. At the end of the algorithm we obtain the heap of $w\in \FC(A_6)$ whose a reduced expression is $s_2s_1s_3 s_6s_5$.

\begin{figure}[h]
\centering
\includegraphics[width=0.8\linewidth]{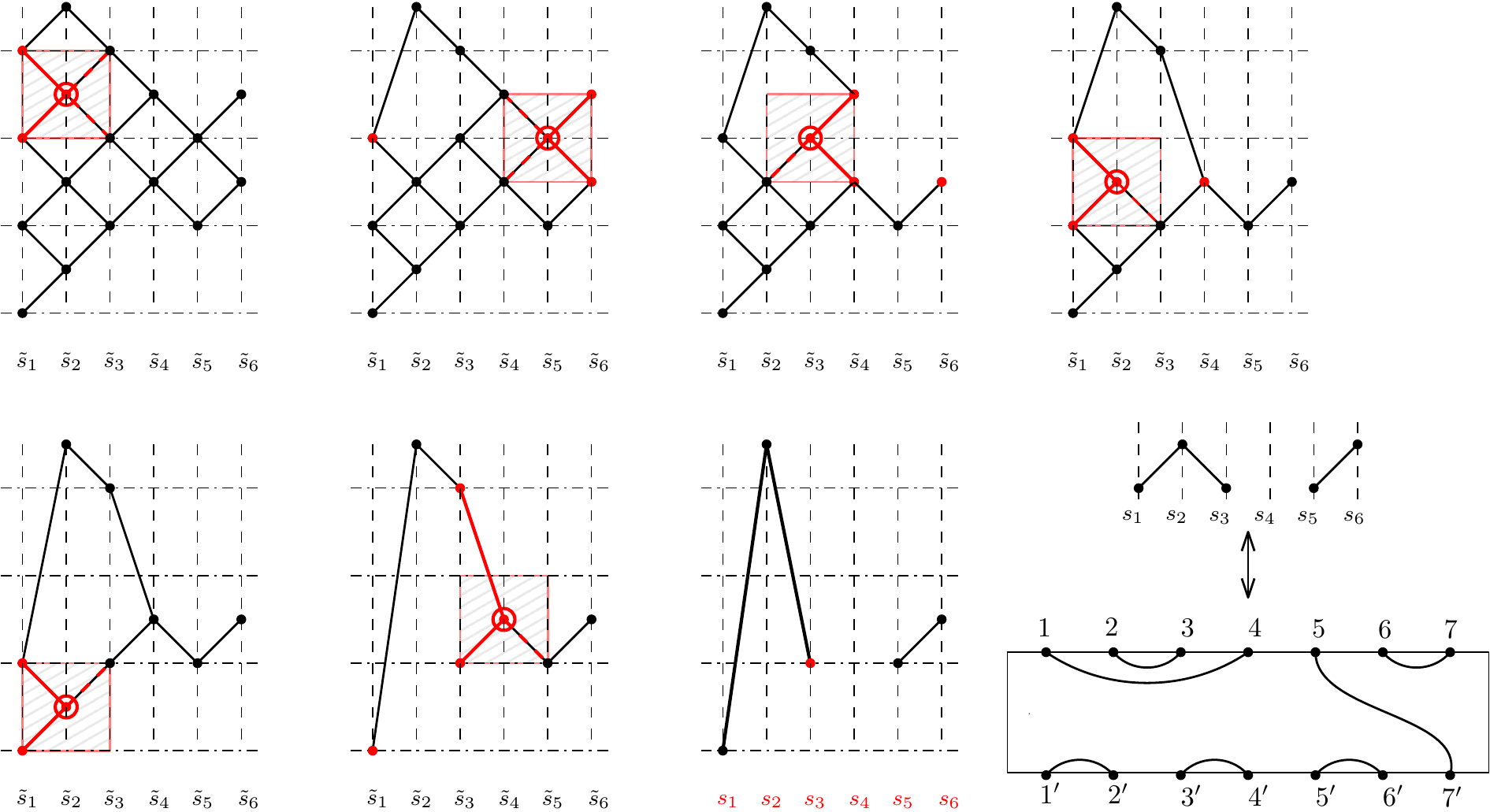}
\caption{How to pass from $H$ to $\cru(H)$: an alternating case.} \label{Example-N2}
\end{figure}

In Figure~\ref{Example-N3}, we consider two alternating elements in $\FC(\widetilde{C}_n)$ such that the associated heaps have the same images through the reduction algorithm. 
Step (F3) is used once in both cases. In the bottom example, the second fork elimination is performed from left to right and the nodes identified in (F3) are in column 5 (depicted in blue).
But if we would have eliminated the right fork $\ts_5 \ts_4\ts_5$, then the identified nodes would have been in column 3. In general, the columns where the identified nodes appear depend on the order of elimination of the forks while their total number does not, as it will be proved in Theorem~\ref{heap-reduction}. Moreover, observe that step (F3) can occur only if the heap contains at least a complete horizontal subheap as defined in Definition~\ref{def:complete}.
 
\begin{figure}[h]
\centering
\includegraphics[width=0.9\linewidth]{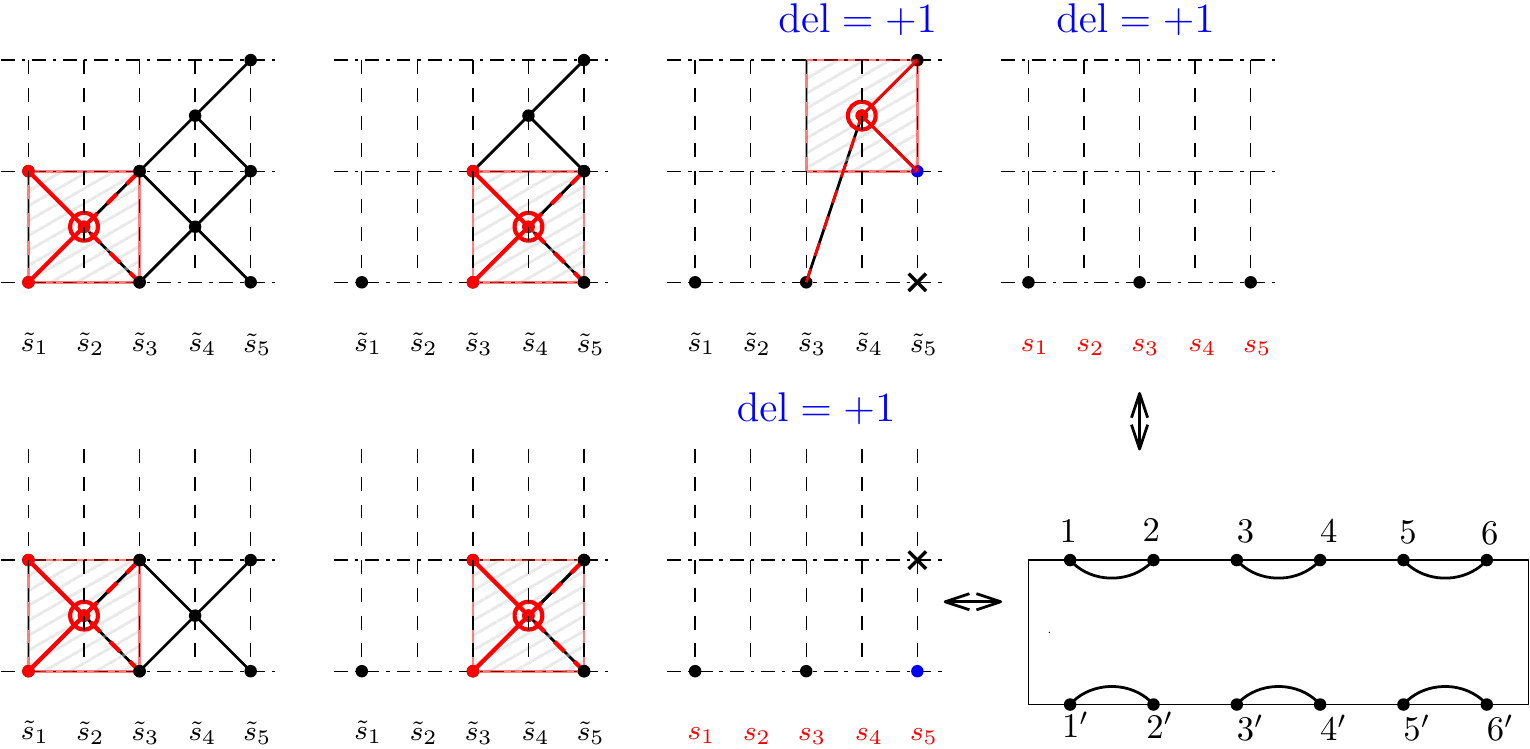}
\caption{How to pass from $H$ to $\cru(H)$: a case with a loop.} \label{Example-N3}
\end{figure}

In Figure~\ref{Example-N4}, we represent the case of a zigzag element. For each of these elements, the fork elimination reduces the zigzag to a segment joining the bottom node to the top one of the initial heap.
 
\begin{figure}[h]
\centering
\includegraphics[width=0.8\linewidth]{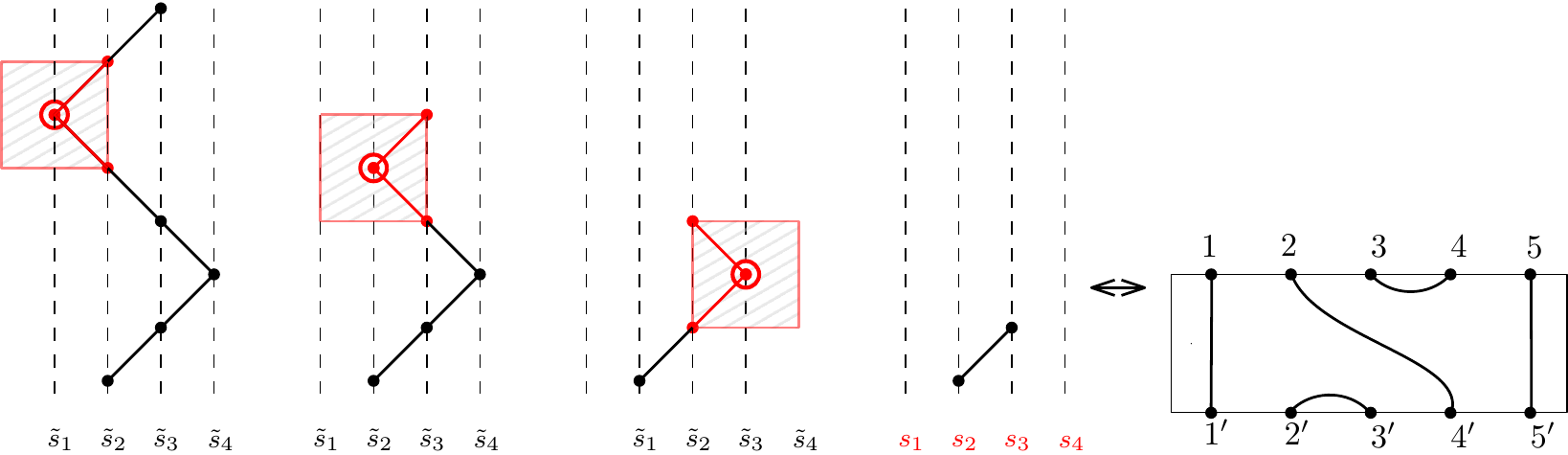}
\caption{How to pass from $H$ to $\cru(H)$: a zigzag case.} \label{Example-N4}
\end{figure}

In Figure~\ref{Example-N5} a left-peak is shown. The elements of this family with the same $j_\ell$, after reduction give rise to heaps with the first $j_\ell-1$ empty columns. The corresponding undecorated diagrams have $j_\ell-1$ initial vertical edges.

\begin{figure}[h]
\centering
\includegraphics[width=0.8\linewidth]{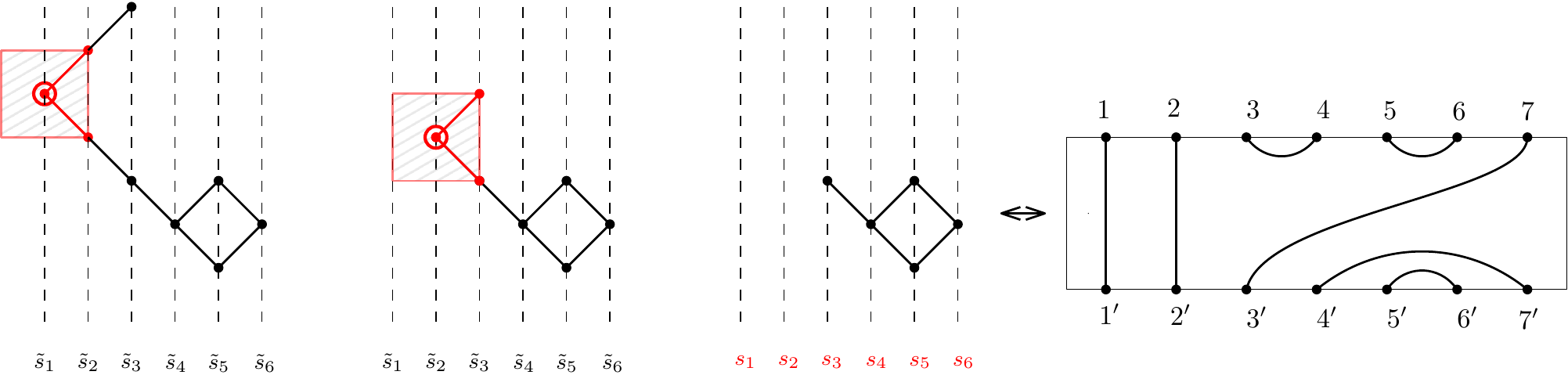}
\caption{How to pass from $H$ to $\cru(H)$: a left-peak case.} \label{Example-N5}
\end{figure}

All these examples show that the reduction algorithm always produces a heap without forks, corresponding to a particular $\tau(\tw) \in \FC(A_{n+1})$. In each case the corresponding undecorated diagram $d_{\tau(\tw)}$ is also represented. This happens in general as proved in the following result.

\begin{Theorem}\label{heap-reduction} Let $\tw \in \FC(\tC_n)$ and $H:=H(\tw)$. The heap $\cru(H)$ and the integer $\del(H)$ do not depend on the order of elimination of the forks. Moreover,  $\cru(H)=H(\tau(\tw))$, and $\del(H)=\alpha(\tw)$, where $\tau(\tw)$ and $\alpha(\tw)$ are defined in Definition~\ref{w-primo}. 
\end{Theorem}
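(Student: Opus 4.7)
The plan is to show that each fork elimination step corresponds, at the diagram level in $\GD(A_{n+1})$, to one application of relation (a3) — possibly followed by one application of (a1) when step (F3) is triggered. The theorem then follows from the uniqueness of the decomposition defining $\tau(\tw)$ and $\alpha(\tw)$.

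First I would establish the following single-step invariance claim: if $H$ is a heap of type $\tC_n$ (not necessarily FC) containing a fork $f = \ts_i \ts_{i\pm 1} \ts_i$, then in $\GD(A_{n+1})$ one has
$$d(H) \;=\; \delta^{\epsilon_f} \cdot d(H \setminus f),$$
where $\epsilon_f = 1$ if step (F3) triggers an identification and $\epsilon_f = 0$ otherwise. To prove this, since $f$ is a convex subheap, there exists a linear extension of $H$ in which the three labels of $f$ appear consecutively, so the product of simple diagrams $d(H)$ contains the factor $d_i d_{i\pm 1} d_i$, which collapses to $d_i$ by (a3); this realizes steps (F1) and (F2). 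If afterwards two consecutive unconnected $\ts_{i\pm 2}$-nodes appear in column $i \pm 2$, then by the hypothesis of (F3) no label non-commuting with $\ts_{i\pm 2}$ separates them anymore, so they may be moved to adjacent positions in some linear extension of the reduced heap; applying relation (a1), $d_{i\pm 2}^2 = \delta\, d_{i \pm 2}$, produces exactly one loop and performs the identification of (F3).

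By induction on the number of forks, iterating (R1) then yields
$$d(H) \;=\; \delta^{\del(H)} \cdot d(\cru(H)) \quad \text{in } \GD(A_{n+1}).$$
Since $\cru(H)$ contains no fork by construction, Proposition~\ref{prop:caracterisation_fullycom} applied to type $A_{n+1}$ ensures that it is the heap of some $v \in \FC(A_{n+1})$, and hence $d(\cru(H)) = d_v$ is loop-free. Comparing with the decomposition $d(\tw) = \delta^{\alpha(\tw)} d_{\tau(\tw)}$ from Definition~\ref{w-primo}, and invoking the uniqueness stated in Remark~\ref{rem:TLA}, we obtain $\del(H) = \alpha(\tw)$ and $v = \tau(\tw)$, i.e.\ $\cru(H) = H(\tau(\tw))$. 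Order-independence is then automatic: the right-hand sides depend only on $\tw$, so they constrain $\del(H)$ and $\cru(H)$ to be invariant under the choice of order of fork elimination. The main obstacle will be the careful verification of the single-step invariance when (F3) is triggered: one must check that the combinatorial condition on column $i \pm 2$ in (F3) is exactly what allows the two $\ts_{i\pm 2}$-labels to be brought together by commutations inside a reduced expression, so that precisely one application of (a1) is needed — producing exactly one loop — using that (F3) occurs at most once per elimination of a single fork.
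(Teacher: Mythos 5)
Your proposal is correct and follows essentially the same route as the paper: establish the single-step identity $d(H)=\delta^{\pm}d(H\setminus f)$ via relations (a3) and (a1), iterate to get $d(H)=\delta^{\del(H)}d(\cru(H))$ with $\cru(H)$ fork-free and loop-free, and conclude by the uniqueness of the decomposition $d(\tw)=\delta^{\alpha(\tw)}d_{\tau(\tw)}$, from which order-independence is automatic. Your added justification via convexity and linear extensions, and your flagging of the (F3) step, are more explicit than the paper's one-line assertion but do not change the argument.
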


\begin{proof}
Let us assume that $H$ contains a fork $f=\ts_i\ts_{i\pm 1}\ts_i$. By eliminating such a fork,  we obtain the heap $H\setminus f$ such that 
\begin{equation}\label{equality}
d(H)=\delta^\pm d(H\setminus f),
\end{equation}
where $\delta^\pm=\delta$ if (F3) occurs, and $\delta^\pm=1$ otherwise. This follows by the definition of $d(H)$, since $d_{i} d_{i\pm 1} d_{i}= d_{i}$, and $d_{i\pm 2}d_{i\pm 2}=\delta d_{i\pm 2}$. 

The  equality \eqref{equality} holds for any heap obtained running the algorithm. In particular, if $H_\ell$ is the heap obtained in the last step, we have $d(H)=\delta^{\del(H)}d(H_\ell)$. The heap $H_\ell$ does not contain any fork and no consecutive unconnected nodes in a single column, hence it is an alternating heap of type $A_{n+1}$, it corresponds to a unique $w\in \FC(A_{n+1})$, and by Theorem~\ref{prop:thetaA}(c) (see also Remark~\ref{rem:TLA}) we have  $d(H_\ell)=d_w$. The definition of $d(H)$ and this last equality imply that  $\delta^{\alpha(\tw)} d_{\tau(\tw)}=d(H)=\delta^{\del(H)}d_ {w}$, therefore  $\del(H)=\alpha(\tw)$ and by Theorem~\ref{prop:thetaA}(c) $w=\tau(\tw)$ hence $\cru(H)=H_\ell=H(\tau(\tw))$. 
\end{proof}

\section{Alternating heaps and snake paths}\label{sec:snakes_paths}

In this section we will introduce a set of paths on the edges of alternating heaps. They will play a crucial role in the definition of the decoration algorithm given in Section~\ref{sec:algdec}. 
\smallskip

In our graphical representation, any edge from a node $\ts_{i}$ to a node $\ts_{i+1}$ and any fork $\ts_{i}\ts_{i\pm 1} \ts_{i}$ in a heap $H$ identify  the {\em half-horizontal region} and the {\em horizontal region} of the plane delimited by the two horizontal lines passing through the nodes $\ts_{i}$ and $\ts_{i+1}$ of the edge, or through the two nodes $\ts_i$ of the fork, respectively. If we intersect such regions with the heap, we obtain a subheap consisting of all the nodes in or between the two horizontal lines delimiting the regions. 

\begin{Definition}\label{def:complete}
    If $H$ is of the form $H(\ts_1 \ts_3 \cdots \ts_{n+1} \cdot \ts_2  \ts_4 \cdots \ts_n \cdot \ts_1 \ts_3 \cdots \ts_{n+1})$, we will denote it by $H_{\infty}$ and call it a {\em complete horizontal heap}. In particular, it contains a fork $f_l
=\ts_1\ts_2\ts_1$ called a {\em left fork} and a fork $f_r
=\ts_{n+1}\ts_n\ts_{n+1}$ called a {\em right} fork.
\end{Definition}

\begin{figure}[h]
\centering
\includegraphics[width=0.85\linewidth]{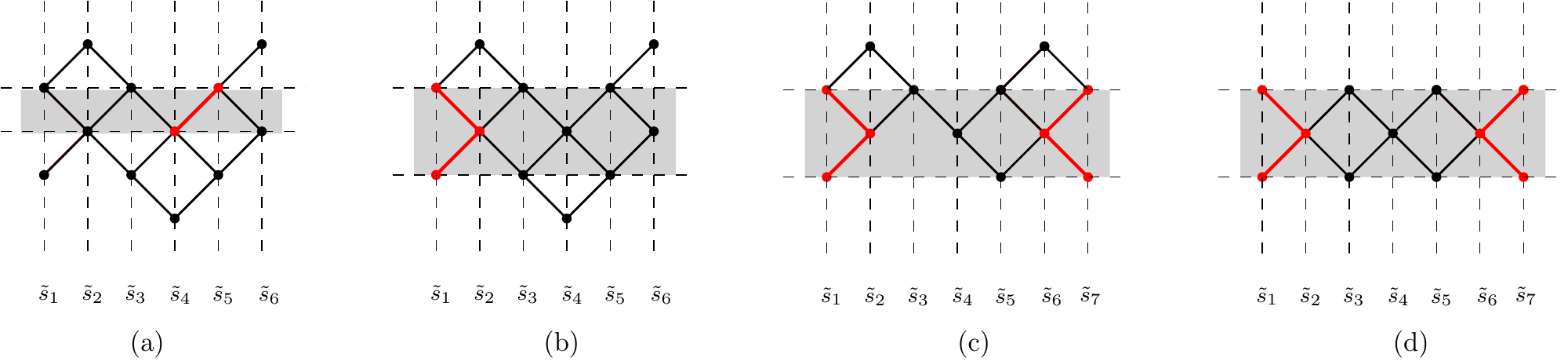}
\caption{The grey area represents a half-horizontal region in (a),  horizontal regions in (b), (c) and (d), but only in (d) identifies a complete horizontal heap.} \label{Fig_Regions}
\end{figure}

Left and right forks will play a crucial role in the sequel of this paper, they are depicted in red in Figure~\ref{Fig_Regions} (b),(c) and (d).

Let $H \in (\ALT)$ be a connected heap.  Starting in a node in the leftmost or in the  rightmost column of $H$,  we can build at most two paths, named {\em up-down paths}, made of alternating up and down steps, or down and up steps, joining adjacent nodes inside $H$. If both paths exist then one of them starts with a down step and the other one with an up step, they traverse the heap horizontally and can reach or not the opposite side, see Figure~\ref{Fig-Snakes}.
\smallskip

If $H$ is an alternating heap containing a left or a right fork $f$, then we can associate to $f$ a single path by joining up-down paths as described in the following algorithm.

 \begin{Definition}[Snake algorithm]\label{def:SA}
Let $H \in (\ALT)$ containing a left (resp. right) fork $f$. We construct a path $\gamma(H,f)$, called {\em snake path}, in $H$ as follows:
\begin{enumerate}

	\item[(UD1)]\begin{itemize} 
	\item[1.] Consider the up-down path starting in the top node of $f$, labeled $\ts_1$ (resp. $\ts_{n+1}$) having  a first right (resp. left)-down step, going right (resp. left) inside a horizontal half region of $H$, and reaching a node $p$ (resp. $q$) labeled $\ts_x$.
	
		If  $x < n+1$ (resp. $> 1$) then the path stops in $p$ (resp. $q$).
		
	    If  $x = n+1$  (resp. $= 1$) then the path stops in $p$ (resp. $q$) unless one of this two cases occurs:
		\begin{itemize}
		\item[(a)] there exists a node $p'$ (resp. $q'$) labeled $\ts_{n+1}$ (resp. $\ts_1$) just above $p$ (resp. $q$) and the path reaches $p$ with a right-down step (left-down);
		\item[(b)] there exists a node $p'$ (resp. $q'$) labeled $\ts_{n+1}$ (resp. $\ts_1$) just below $p$ (resp. $q$) and the path reaches $p$ with a right-up step (left-up). 
		\end{itemize}
    \item[2.] In cases (a) and (b) extend the path by composing it with the up-down path obtained applying step (UD1.1) to the fork $f'$, with top and bottom nodes $p$ and $p'$ (resp. $q$ and $q'$), starting in $p'$ (resp. $q'$) and going in the opposite direction inside the horizontal region determined by $f'$.
	\end{itemize}
	\smallskip
		
		\item[(UD2)] 
		
		\begin{itemize}
		\item[1.]  If the path does not stop, since $H$ is finite, then it forms a cycle. We set $\gamma(H,f):=\gamma_{\infty}$ such a cycle and the algorithm stops. 
		
	\item[2.] If the path ends in a node labeled by $\ts_x$, then we denote such a node by $B_x$, and the path by $\gamma^{top}$.

	\end{itemize}
	\smallskip
	
	\item[(UD3)] If $\gamma(H,f)\neq \gamma_{\infty}$, denote by $\gamma^{bot}$ the up-down path starting in the bottom node of $f$, labeled $\ts_1$ (resp. $\ts_{n+1}$), having  a first right (resp. left)-up step, going right (resp. left) inside a horizontal region of $H$, and obtained repeating the same steps as in (UD1), and denote by $B_y$ the node labeled $\ts_y$ where the path ends.
	\smallskip
	
	\item[(UD4)] Set $\gamma(H,f):=\gamma_{x,y}$ the path going from $B_x$ to $B_y$ obtained by composing $\gamma^{top}$ and $\gamma^{bot}$ going through the first backward and the second forward, and where the top and the bottom nodes of any encountered fork has been connected.
\end{enumerate}
\end{Definition}

	\begin{figure}[h]
	\centering
	\includegraphics[width=0.9\linewidth]{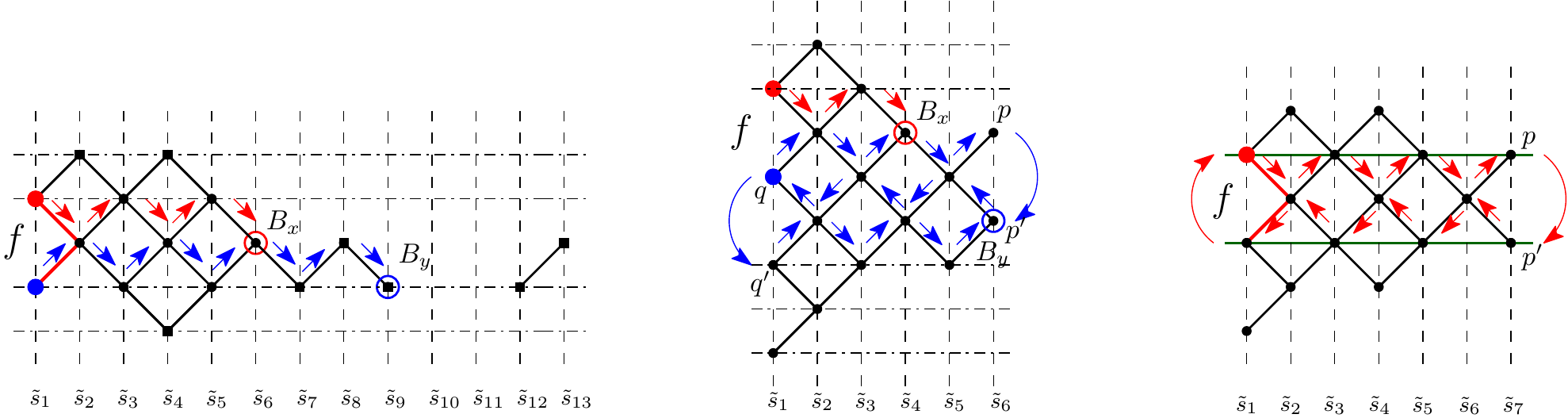}
	\caption{The red and blue up-down paths are examples of $\gamma^{top}$ and $\gamma^{bot}$, respectively.}\label{Fig-Snakes}
	\end{figure}

\begin{Remark}\label{rem:cycle}  Let $H \in (\ALT)$ with a fork $f$.
\begin{itemize}
\item[(a)] If $\gamma(H,f)$ crosses two different left (or right) forks $f$ and $\bar{f}$ in $H$, then $\gamma(H,f)=\gamma(H,\bar{f})$. Instead if $\gamma(H,f)=\gamma_{x,y}$ crosses a left fork $f$ and a right fork $\bar{f}$, then $\gamma(H,\bar{f})=\gamma_{x,y}$ or $\gamma(H,\bar{f})=\gamma_{y,x}$, see Figure~\ref{Fig_Regions}(c).

\item[(b)] If $\gamma(H,f)$ is a cycle then $n$ must be even. Indeed, if by contradiction $n$ is odd, the nodes $p$ and $p'$  (resp. $q$ and $q'$) in (UD1) can not be in the same horizontal region, so the path $\gamma(H,f)$ can not go back to the initial node and it would not be a cycle. An example is given in Figure~\ref{Fig-Snakes}, right.
\end{itemize}
\end{Remark}

\begin{Example}\label{rem:singlepath0}
 In Figure~\ref{Fig-Snakes}, left, $\gamma(H,f)=\gamma_{6,9}$ can be explicitly described as the path joining the node $B_6$ with the node $B_9$ and crossing the nodes labeled by 
 $$\textcolor{red}{B_6=\ts_6 - \ts_5 - \ts_4-\ts_3-\ts_2-\ts_1}- \textcolor{blue}{\ts_1-\ts_2-\ts_3-\ts_4-\ts_5-\ts_6-\ts_7-\ts_8-\ts_9=B_9}$$ and it is also represented by the blue path in Figure~\ref{Fig-UD1}, left.
\end{Example}

We now introduce some notation and definitions that will be useful in the sequel.

Let fix $H \in (\ALT)$. Any snake path $\gamma_{x,y}$ obtained by applying the algorithm to a fork identifies the subheap of $H$ of the nodes crossed by it.    
If there exists an edge of the Hasse diagram of $H$ not crossed by any $\gamma_{x,y}$, then in its associated half-horizontal region, there exists an up-down path containing it. Furthermore, we can always assume that such up-down path starts in a vertex $B_x$ labeled $\ts_x$ and ends in a vertex $B_y$ labeled $\ts_y$, with $x<y$, we denote it by $\gamma_{\vec{x,y}}$ and call it snake path as well. Examples are given in Figure~\ref{Fig-UD1}.
All the snake paths, coming from forks or not, give a partition of the edges of the Hasse diagram of $H$, where two snake paths are considered disjoint if they do not share any edge. Moreover, to avoid the ambiguity in Remark~\ref{rem:cycle} (a), the path $\gamma_{x,y}$ denotes the one obtained starting from a left fork if it crosses also a right fork. We also need to introduce also three types of {\em degenerate} snake paths: 
\begin{itemize}
\item $\gamma_{x}$ and $\gamma_{x'}$ denote the snake path containing only a maximal or a minimal vertical node labeled $\ts_x$, respectively.
\item $\gamma_{\check{x}}$ denotes the empty snake path at level $x$ meaning that in $H$ there are no nodes labeled by $\ts_{x-1}$ and $\ts_{x}$. 
\end{itemize}
We denote the set of all such paths by $\Snakes(H)$.

\begin{Example}\label{rem:singlepath}
The snake paths of the heaps in Figure~\ref{Fig-Snakes} are depicted in different colors in Figure~\ref{Fig-UD1}. In particular, the heap $H$ on the left has three snake paths not coming from forks depicted in green ${\gamma}_{\vec{1,6}}$, orange ${\gamma}_{\vec{3,6}}$, and black ${\gamma}_{\vec{12,13}}$, and several degenerate ones, so $\Snakes(H)=\{\gamma_{6,9}, {\gamma}_{\vec{1,6}}, {\gamma}_{\vec{3,6}},{\gamma}_{\vec{12,13}}, \gamma_{2},\gamma_{4},\gamma_{8},\gamma_{13},\gamma_{1'},\gamma_{4'},\gamma_{7'},\gamma_{9'}, \gamma_{12'},\gamma_{\check{11}}\}$.

\begin{figure}[h]
	\centering
	\includegraphics[width=0.9\linewidth]{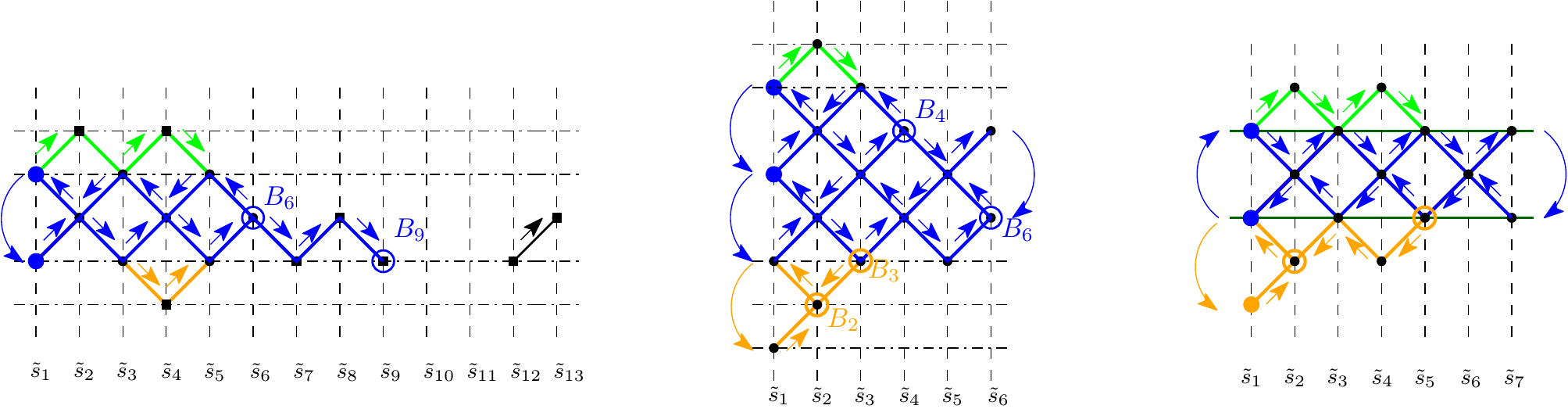}
	\caption{Heap partition in snakes paths.}\label{Fig-UD1}
	\end{figure}
\end{Example}

\begin{Definition}\label{def-hk} Let $H \in (\ALT)$ and $\gamma \in \Snakes(H)$. 
We define two nodes $h$ and $k$  of the  standard $(n+2)$-box as follows.
\begin{itemize}
\item[(a)] If $\gamma=\gamma_{x,y}$, then 
\begin{enumerate}
		\item  $h:=x+1$ (resp. $k:=y+1$) on the north face of the box, if the last step of $\gamma^{top}$  (resp. $\gamma^{bot}$) reaching $\ts_x$ (resp. $\ts_y$) is right-down;
		\item  $h:=x$ (resp. $k:=y$) on the north face of the box, if the last step of $\gamma^{top}$  (resp. $\gamma^{bot}$) reaching $\ts_x$ (resp. $\ts_y$) is left-down;
		\item  $h:=(x+1)'$ (resp. $k:=(y+1)'$) on the south face of the box, if the last step of $\gamma^{top}$  (resp. $\gamma^{bot}$) reaching $\ts_x$ (resp. $\ts_y$) is right-up;
		\item  $h:=x'$ (resp. $k:=y'$) on the south face of the box, if the last step of $\gamma^{top}$  (resp. $\gamma^{bot}$) reaching $\ts_x$ (resp. $\ts_y$) is left-up.
	\end{enumerate}  
\item[(b)] If $\gamma=\gamma_{\vec{xy}}$, then 
\begin{enumerate}
		\item[$(1)'$]  $h:=x$ (resp. $h:=x'$) on the north (resp. south) face of the box, if the first step of $\gamma$ is right-up (resp. right-down);
		\item[$(2)'$]  $k:=y+1$ (resp. $k:=(y+1)'$) on the north (resp. south) face of the box, if the last step of $\gamma$ is right-down (resp. right-up).
	\end{enumerate}  
\item[(c)] If $\gamma={\gamma}_{x}$, then $h=x$, $k=x+1$ on the north face of the box.
\item[(d)] If $\gamma={\gamma}_{x'}$, then $h=x'$, $k=(x+1)'$ on the south face of the box.
\item[(e)] If $\gamma={\gamma}_{\check{x}}$, then $h=x$ on the north face and $k=x'$ on the south face of the box.
\end{itemize}

We denote by $(h,k)_\gamma$ the corresponding edge in the standard $(n+2)$-box.  
\end{Definition}

\begin{figure}[h]
\centering
\includegraphics[width=0.85\linewidth]{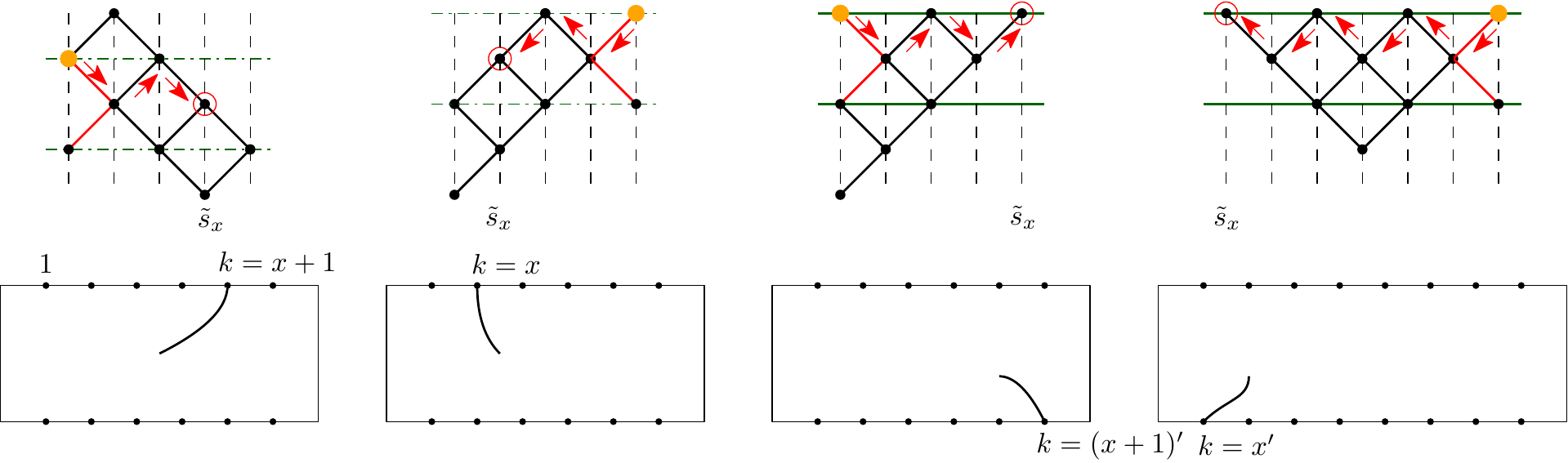}
\caption{} \label{Fig8}
\end{figure}

\begin{Example}
The edges corresponding to the snake paths in $\Snakes(H)$ computed in Example~\ref{rem:singlepath} are respectively $(7,10)$, $(1,6)$, $(3',6')$, $(12,14')$, $(2,3)$, $(4,5)$, $(8,9)$, $(13,14)$, $(1',2')$, $(4',5')$, $(7',8')$, $(9',10')$, $(12',13')$, and $(11,11')$.
\end{Example}

\begin{Theorem}\label{prop-hk}
Let $\tw\in (\ALT)$ and $\gamma\in \Snakes(H(\tw))$, different from $\gamma_\infty$ and non degenerate. Then the edge $(h,k)_\gamma$ belongs to $d_{\tau(\tw)}$.
\end{Theorem}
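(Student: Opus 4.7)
The plan is to induct on the number of forks in $H := H(\tw)$, relying on the interpretation of $d(\tw) = d_{i_1}\cdots d_{i_k}$ as the stacking of simple diagrams along a linear extension of $H$. The guiding intuition is that each snake path in $H$ literally traces a strand of $d(\tw)$: an up-step (resp.\ down-step) between two adjacent nodes $\ts_p, \ts_{p\pm 1}$ in the heap corresponds to the strand passing from the arc of $d_p$ to the arc of $d_{p\pm 1}$ in the stacked diagram.

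\textbf{Base case.} If $H$ has no forks, then by Theorem~\ref{heaps-typeA} and Remark~\ref{step0}(2), $\tw$ identifies with an element of $\FC(A_{n+1})$, so that $\tau(\tw) = \tw$ and $d(\tw) = d_{\tau(\tw)}$ (no loops occur, as step (F3) of the reduction requires a fork). Every non-degenerate snake path is then of the form $\gamma_{\vec{x,y}}$. Starting the strand of $d_{\tau(\tw)}$ at the box-node $h$ prescribed by the first step of $\gamma$, each zigzag step of $\gamma$ matches one U-turn of the strand at the arc of the corresponding simple diagram, so that the strand emerges at the box-node $k$ prescribed by the last step of $\gamma$. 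Comparing with Definition~\ref{def-hk}(b), one obtains $(h,k)_\gamma$ as an edge of $d_{\tau(\tw)}$.

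\textbf{Inductive step.} If $H$ contains a fork $f = \ts_i \ts_{i\pm 1} \ts_i$, set $H' := H \setminus f$. By Theorem~\ref{heap-reduction} we have $\cru(H) = \cru(H')$, hence $d_{\tau(\tw)}$ is unchanged when passing from $H$ to $H'$. I will show that for each non-degenerate, non-$\gamma_\infty$ snake path $\gamma \in \Snakes(H)$, there is a snake path $\gamma' \in \Snakes(H')$ with $(h,k)_{\gamma'} = (h,k)_\gamma$. The inductive hypothesis applied to $H'$ (which has strictly fewer forks) then yields that $(h,k)_\gamma$ is an edge of $d_{\tau(H')} = d_{\tau(\tw)}$. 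The construction of $\gamma'$ mirrors the diagrammatic relation $d_i d_{i\pm 1} d_i = d_i$, which absorbs the middle arc of the fork while preserving the endpoints of any strand traversing it: snake paths disjoint from $f$ persist unchanged, while snake paths passing through $f$ are shortened by deleting the visit to the middle $\ts_{i\pm 1}$-node. In particular, when $f$ is a left or right fork generating a path $\gamma = \gamma_{x,y}$, the successor $\gamma'$ is of type $\gamma_{\vec{x,y}}$ (or is further transformed by later fork eliminations), and the endpoint rules of Definition~\ref{def-hk}(a) for $\gamma_{x,y}$ translate consistently into those of Definition~\ref{def-hk}(b) for $\gamma_{\vec{x,y}}$.

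\textbf{Main obstacle.} The bulk of the work is the case analysis in the inductive step. Three difficulties deserve particular attention: (i) the orientation of the final steps of $\gamma^{top}$ and $\gamma^{bot}$ must be tracked through fork elimination so that the appropriate sub-case of Definition~\ref{def-hk} applies to $\gamma'$; (ii) when step (F3) is triggered, a complete horizontal region of $H$ collapses and produces a loop in $d(\tw)$ via $d_j^2 = \delta d_j$, accounting for part of the $\delta^{\alpha(\tw)}$ factor and corresponding precisely to a cyclic snake path $\gamma_\infty$ excluded from the statement, so no spurious edge is produced; (iii) when $\gamma_{x,y}$ crosses both a left and a right fork, as allowed by Remark~\ref{rem:cycle}(a), the snake path admits two labelings and one must verify that both yield the same edge $(h,k)_\gamma$ under reduction.
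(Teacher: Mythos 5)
Your guiding intuition (snake paths trace strands of the stacked diagram) is the right one, and your base case is essentially sound, but the inductive step contains a genuine gap. You induct on the number of forks and apply the inductive hypothesis to $H':=H\setminus f$. However, the statement of the theorem, and indeed the very objects $\Snakes(\cdot)$ and $(h,k)_{\gamma}$ of Definitions~\ref{def:SA} and~\ref{def-hk}, are only defined for \emph{alternating} heaps, and $H\setminus f$ is in general not alternating (nor the heap of an FC element). Concretely: in an alternating heap of type $\tC_n$ the only forks are left forks $\ts_1\ts_2\ts_1$ and right forks $\ts_{n+1}\ts_n\ts_{n+1}$ (an interior fork $\ts_i\ts_{i\pm1}\ts_i$ would violate alternation of $H_{\{\ts_{i\mp1},\ts_i\}}$); eliminating a left fork deletes its middle $\ts_2$, which typically leaves two $\ts_3$'s adjacent in the chain $H'_{\{\ts_2,\ts_3\}}$, so $H'$ fails to be alternating and acquires a new interior fork $\ts_3\ts_4\ts_3$ --- this is exactly why the reduction of Figure~\ref{Example-N2} takes several rounds. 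This breaks your argument twice over: the inductive hypothesis cannot be invoked for $H'$, and your induction parameter (number of forks) need not decrease under a single fork elimination. The assertion that ``the endpoint rules of Definition~\ref{def-hk}(a) translate consistently into those of Definition~\ref{def-hk}(b)'' is precisely the content that would have to be proved, for objects that are not yet defined on $H'$.

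To repair this you would either have to extend the notions of snake path and of $(h,k)_\gamma$ to arbitrary (non-alternating, non-FC) heaps appearing as intermediate stages of the Reduction algorithm~\ref{reductionalgorithm} and re-prove their compatibility with fork elimination --- a substantial amount of new machinery --- or abandon the induction. The paper does the latter: it reads off a single reduced expression $\mathbf{\tw}_1\cdot\mathbf{\tw}_\gamma\cdot\mathbf{\tw}_2$ from the heap in which the factor $\mathbf{\tw}_\gamma$ collects exactly the nodes crossed by $\gamma$, computes the diagram $d(\mathbf{\tw}_\gamma)$ directly (including the general form $d_{init}(d_{\mathcal E}\cdot d_{\mathcal O})^{\ell}d_{end}$ when $\gamma$ bounces between the walls), and checks that $d(\mathbf{\tw}_1)$ and $d(\mathbf{\tw}_2)$ contribute only vertical strands at the nodes $h$ and $k$, so the edge survives in the product. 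That route never leaves the alternating world and avoids your difficulty entirely.
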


\begin{proof}
 Assume first  that  there exists a fork $f$ in $H(\tw)$ such that  $\gamma=\gamma_{x,y}=\gamma(H,f)$ and for the sake of simplicity suppose   $f$ is a left fork. 
\smallskip 

We first prove the statement for a path $\gamma_{x,y}$ obtained using only part (UD1.1) of the algorithm, i.e. $B_x$ and $B_y$ are reached by the paths $\gamma^{top}$ and $\gamma^{bot}$, without any change of direction, as in Figure~\ref{Fig-Snakes}, left.

Reading the labels of the nodes of $H(\tw)$ horizontally from left to right and from top to bottom, we obtain a reduced expression for $\tw$ of type ${\bf \tw}_1 \cdot {\bf \tw}_\gamma\cdot {\bf \tw}_2,$ where the factor
\begin{equation}\label{reduc-expression-H-epsilon}
 {\bf \tw}_\gamma:= \ts_1\ts_3\cdots \ts_{2i+1} \cdot \ts_2\ts_4\cdots \ts_{2j} \cdot \ts_1\ts_3\cdots \ts_{2l+1} 
\end{equation}
corresponds to the nodes crossed by the path $\gamma_{x,y}$, while ${\bf \tw}_1$ and ${\bf \tw}_2$ arise from the nodes in the regions above and below the one containing $\gamma_{x,y}$, respectively.

Since $f$ is a left fork, $\gamma^{top}$ does not change direction and $H(\tw)$ is alternating we have $h=x+1$ or $h=(x+1)'$ (see Definition~\ref{def-hk} (1)(3)):

\begin{itemize}
\item[(h1)] if $h=x+1$,  we do not have nodes labeled by $\ts_x$ and $\ts_{x+1}$ in the part of the heap above $B_x$, and so $\ts_x$ and $\ts_{x+1}$ are not in ${\bf \tw}_1$, see Figure~\ref{Fig-UD2}. In this case either  $x=2i+2$ or $x=2j$ where $i$ and $j$ are defined in (\ref{reduc-expression-H-epsilon}).
\item[(h2)] if $h=(x+1)'$, we do not have nodes labeled by $\ts_x$ and $\ts_{x+1}$ in the part of the heap below $B_x$, and so $\ts_x$ and $\ts_{x+1}$ are not in ${\bf \tw}_2$.  In this case $x=2i+1$.
\end{itemize}

Similarly, we can only have $k=y+1$ or $k=(y+1)'$ and :
\begin{itemize}
\item[(k1)] if $k=y+1$, we do not have nodes labeled by $\ts_y$ and $\ts_{y+1}$ in the part of the heap above $B_y$, and so $\ts_y$ and $\ts_{y+1}$ are not in ${\bf \tw}_1$, see Figure~\ref{Fig-UD2},  left.  In this case $y=2l+1$.
\item[(k2)] if $k=(y+1)'$, we do not have nodes labeled by $\ts_y$ and $\ts_{y+1}$ in the part of the heap below $B_y$, and so $\ts_y$ and $\ts_{y+1}$ are not in ${\bf \tw}_2$, see Figure~\ref{Fig-UD2}, right.  In this case  either $y=2l+2$ or $y=2j$.
\end{itemize}

\begin{figure}[h]
	\centering
	\includegraphics[width=0.63\linewidth]{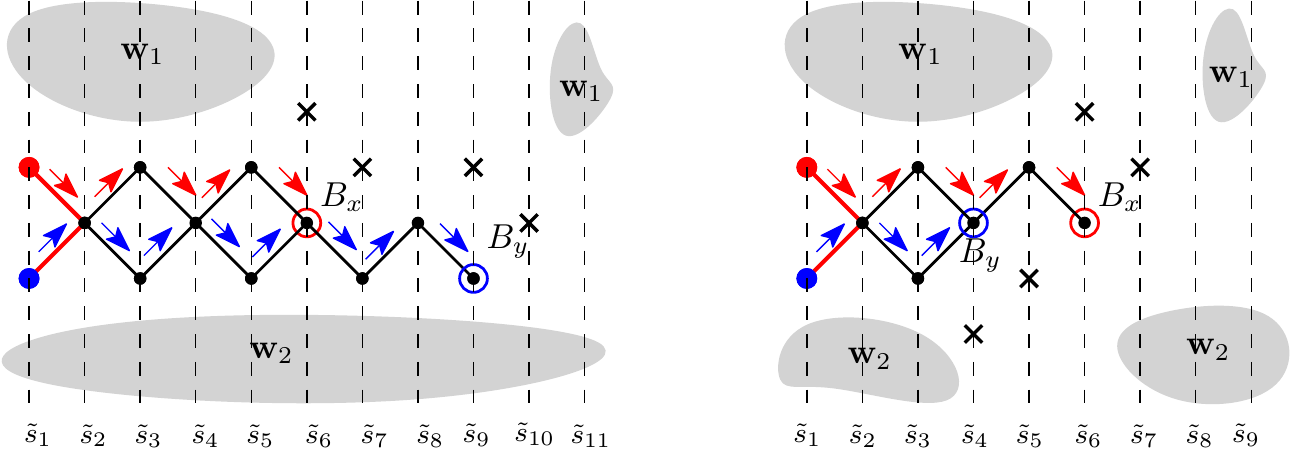}
	\caption{}
	\label{Fig-UD2}
\end{figure}

Now consider the concrete diagram obtained by the product $d({\bf \tw}_1) d({\bf \tw}_\gamma) d({\bf \tw}_2)$. We have
\begin{equation}\label{espressionedwgamma}
d({\bf \tw}_\gamma) = d_1d_3\cdots d_{2i+1}d_2d_4\cdots d_{2j}d_1d_3\cdots d_{2l+1}, 
\end{equation}
and similar factorizations for $d({\bf \tw}_1)$ and $d({\bf \tw}_2)$ arising from the heap. As in Definition~\ref{w-primo}, we know that
\begin{equation}\label{fact_delta}
d({\bf \tw}_1) d({\bf \tw}_\gamma) d({\bf \tw}_2)=\delta^{\alpha(\tw)} d_{\tau(\tw)}
\end{equation}
and 
a direct computation shows that $d({\bf \tw}_\gamma)$ has an edge going from node $h$ to node $k$. 
\smallskip

Now suppose that case (h1) holds. The diagram $d({\bf \tw}_1)$ has a vertical edge from $h$ to $h'$ since in its considered factorization $d_x$ and $d_{x+1}$ do not appear. Hence,  the edge starting from $h$ in the north face of $d({\bf \tw}_1)$ goes straight until it is composed with the edge in $d({\bf \tw}_\gamma)$ starting in $h$, and it sorts in node $k$. There are two possibilities: 
\begin{itemize}
\item If (k1) holds, then the path into analysis passes through a node $k$ of $d({\bf \tw}_\gamma)$ and goes back up to node $k$ in the north face of $d({\bf \tw}_1)$, since 
$d({\bf \tw}_1)$ has a vertical edge from $k$ to $k'$ (there are neither $d_y$ nor $d_{y+1}$ in the considered factorization). In this case $d({\bf \tw}_2)$ does not play any role. From \eqref{fact_delta}, $d_{\tau(\tw)}$ contains a non-propagating edge from $h$ to $k$ in the north face.
 
\item if (k2) holds, then the path into analysis passes through a node $k'$ in the south face of $d({\bf \tw}_\gamma)$ and goes down to node $(y+1)'=k$ in $d({\bf \tw}_2)$ since there are neither $d_y$ nor $d_{y+1}$ in the considered factorization of $d({\bf \tw}_2)$. Again, from \eqref{fact_delta}, $d_{\tau(\tw)}$ contains a propagating edge from $h$ in the north face to $k$ in the south face.
\end{itemize}
If (h2) holds a similar analysis shows that $d_{\tau(\tw)}$ contains the edge $(h,k)$, starting from the south face, propagating or not.
\medskip

In the general case $\gamma_{x,y}$ can reach the extreme columns $1$ and  $n+1$ of $H(\tw)$ and it can go back and forth several times between them, possibly crossing adjacent horizontal regions. This makes ${\bf \tw}_\gamma$ more involved then in the previous case as we will see in what follows. So assume that the path $\gamma^{top}$ reaches the last column and does not stop there (see (UD1.2)). 
\smallskip

If $n$ is even, $\gamma^{top}$ passes through the top node of a right fork and changes direction inside the same horizontal region and stops in a node $B_x$ with $x\geq 3$. The path $\gamma^{bot}$ can not reach the node $B_x$ otherwise $\gamma$ would be a cycle, so $\gamma^{bot}$ stops in a node $B_y$, with $y<x$. Hence the corresponding expression~\eqref{espressionedwgamma} has the form $d_1 d_3 \cdots d_{n+1} \cdot d_2 d_4 \cdots d_{n} \cdot d_1 d_3 \cdots d_{2l+1} \cdot d_{2q+1} d_{2q+3} \cdots d_{n+1}$, where $l < q$ depending on $y$ and $x$. A similar arguments as above can be used to show that the edge $(h,k)$ is in $d_{\tau(\tw)}$.
The symmetric situation, namely when  $n$ is  even,  $\gamma^{bot}$  reaches the column $n+1$  and $\gamma^{top}$ does not, is analogous.

\medskip If $n$ is odd, when the path $\gamma^{top}$ reaches column $n+1$, it passes through the top node of a right fork in the upper horizontal region and continues in the opposite direction. Hence it passes through all the nodes with labels $\ts_i$ with $1\leq i \leq n+1$. And so the expression  \eqref{espressionedwgamma} contains the two factors $d_\mathcal{O}:=d_{1}  d_3 \cdots d_{n}$ and $d_\mathcal{E}:=d_2 d_4 \cdots d_{n+1}$.  Now if $\gamma^{top}$ goes back till column $1$, it reaches the top node of another upper left fork and it passes through all nodes labeled $\ts_i$ with $i$ odd, and so the expression  \eqref{espressionedwgamma} will contain another factor $d_\mathcal{O}$. Depending on how many times the path $\gamma^{top}$ changes direction a finite numbers of such factors occurs in \eqref{espressionedwgamma}. The same argument works for $\gamma^{bot}$. Hence the general  expression  for  \eqref{espressionedwgamma} is
\begin{equation}\label{complicated}
d({\bf \tw}_\gamma)= d_{init}(d_\mathcal{E} \cdot d_\mathcal{O})^\ell  d_{end},
\end{equation}
where $\ell$ depends on the number of times the path goes back and forth from columns 1 and $n+1$, and $d_{init}$ and $d_{end}$ are made of initial or final subfactors of $d_\mathcal{O}$ or $d_\mathcal{E}$. 
Now the fact that $(h,k)$ belongs to  $d_{\tau(\tw)}$ follows from the same argument used at the beginning of this proof.

It remains to show the statement for $\gamma={\gamma}_{\vec{x,y}}$ not coming from a fork. In this case 
${\bf \tw}_\gamma:= \ts_x\ts_{x+2}\cdots \ts_{x+2i} \cdot \ts_{x+1}\ts_{x+3}\cdots \ts_{x+2i\pm 1}$
or the reciprocal ${\bf \tw}_\gamma:= \ts_{x+1}\ts_{x+3}\cdots \ts_{x+2i+1} \cdot \ts_x\ts_{x+2}\cdots \ts_{x+2i+1\pm 1}$. Once again the same argument above can be applied.
\end{proof}

\begin{Theorem}\label{prop:cycle} Let $\tw \in (\ALT)$, and let $f$ be a fork such that $\gamma(H(\tw),f)=\gamma_{\infty}$. Then a loop $\mathcal{L}$ appears in $d(\tw)$. Moreover, exactly two unconnected consecutive nodes arise in the fork elimination of $f$. 
\end{Theorem}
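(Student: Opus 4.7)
The plan is to adapt the approach used in the proof of Theorem~\ref{prop-hk} to the cyclic case. Since $\gamma(H(\tw),f)=\gamma_\infty$, Remark~\ref{rem:cycle}(b) forces $n$ to be even. I would start by reading $H(\tw)$ row by row to produce a reduced expression factored as ${\bf \tw}={\bf \tw}_1 \cdot {\bf \tw}_\gamma \cdot {\bf \tw}_2$, where ${\bf \tw}_\gamma$ collects exactly the labels of the nodes traversed by $\gamma_\infty$, while ${\bf \tw}_1$ and ${\bf \tw}_2$ collect the labels of the nodes in the strict upper and lower horizontal regions, respectively. By the very definition of $\gamma_\infty$, its trace bounces between the leftmost and rightmost columns of $H(\tw)$, passing through the top node of alternating left and right forks at each reversal, and eventually closes up on itself.

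The key computation is then to show that $d({\bf \tw}_\gamma)$ already contains exactly one closed loop in the standard $(n+2)$-box. Introducing $d_\mathcal{O}=d_1 d_3\cdots d_{n+1}$ and $d_\mathcal{E}=d_2 d_4\cdots d_n$ (well defined because $n$ is even), the closed character of the cycle gives a factorization $d({\bf \tw}_\gamma)=(d_\mathcal{O}\cdot d_\mathcal{E})^m$ for some $m\geq 1$, without any initial or final unpaired factor as in the non-cyclic expression~\eqref{complicated}. A direct diagrammatic check in the standard box then shows that stacking $d_\mathcal{O}$ on top of $d_\mathcal{E}$ glues the non-propagating arcs of $d_\mathcal{O}$ in the south face with those of $d_\mathcal{E}$ in the north face along matching nodes, closing into an arc isotopic to the admissible loop $\mathcal{L}$ of Figure~\ref{fig:loop}. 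Multiplying by $d({\bf \tw}_1)$ and $d({\bf \tw}_2)$ does not absorb this loop, because by construction ${\bf \tw}_1$ and ${\bf \tw}_2$ contain no generators at the levels saturated by $\gamma_\infty$, so no edge can enter the interior of the loop. Hence $\mathcal{L}$ survives in $d(\tw)$.

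For the second assertion, I would examine the local effect of the fork elimination of $f=\ts_i\ts_{i\pm 1}\ts_i$. The two $\ts_i$-nodes of $f$ lie on $\gamma_\infty$ and are connected, along the cycle, to two distinct $\ts_{i\pm 1}$-nodes which in turn are connected to two $\ts_{i\pm 2}$-nodes. When the middle $\ts_{i\pm 1}$ of $f$ is removed in step (F1) and the two $\ts_i$-nodes are identified in (F2), these two $\ts_{i\pm 2}$-nodes become consecutive in column $i\pm 2$ with no node between them, triggering step (F3). The alternation of $H(\tw)$ prevents any other pair of consecutive unconnected nodes from being created by this elimination, so step (F3) fires exactly once, contributing a single unit to $\del(H)$ and matching the single loop just identified in $d(\tw)$.

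The main obstacle is the combinatorial verification that $(d_\mathcal{O}\cdot d_\mathcal{E})^m$ contains exactly one loop (rather than several nested ones): this requires tracking precisely how the non-propagating arcs of the two kinds of simple boxes pair up at each interface when $n$ is even, which is the geometric heart of why a cyclic snake path corresponds to a single diagrammatic loop.
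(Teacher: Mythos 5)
Your overall strategy---identify the subheap traversed by $\gamma_\infty$, factor a reduced word as ${\bf\tw}_1\cdot{\bf\tw}_\gamma\cdot{\bf\tw}_2$, and compute $d({\bf\tw}_\gamma)$---is reasonable and close in spirit to what the paper does, but the central computation is set up incorrectly. A cycle $\gamma_\infty$ does not bounce back and forth many times: tracing Definition~\ref{def:SA} with $n$ even, the path goes right along the upper half-region, reverses once at a right fork, returns along the lower half-region, and closes up at the left fork; the nodes it crosses form exactly one complete horizontal subheap $H_\infty$ of Definition~\ref{def:complete}. The corresponding factor is therefore $d({\bf\tw}_\gamma)=d_\mathcal{O}\,d_\mathcal{E}\,d_\mathcal{O}$, with the odd row occurring twice, and \emph{not} $(d_\mathcal{O}\,d_\mathcal{E})^m$. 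This is not a cosmetic difference: $(d_\mathcal{O}\,d_\mathcal{E})^1$ contains no loop at all (at each interface the arcs of the two layers link into a single open path $1-2-\cdots-(n+2)$), and more generally $(d_\mathcal{O}\,d_\mathcal{E})^m=\delta^{m-1}d_\mathcal{O}\,d_\mathcal{E}$ has $m-1$ loops, so the ``exactly one loop'' you want cannot be extracted from your factorization---the ``main obstacle'' you flag at the end is a symptom of this error. With the correct word the computation is immediate: after commutations, $d_\mathcal{O}\,d_\mathcal{E}\,d_\mathcal{O}=(d_1d_2d_1)(d_3d_4d_3)\cdots(d_{n-1}d_nd_{n-1})\,d_{n+1}d_{n+1}=\delta\,d_\mathcal{O}$, i.e.\ exactly one loop. (The paper reaches the same conclusion by running the reduction algorithm inside $H_\infty$ and invoking \eqref{equality}.)

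The second assertion is also not established by your local argument. After eliminating the single fork $f=\ts_1\ts_2\ts_1$, the two nodes in column $3$ are still connected through the middle node $\ts_4$, so step (F3) does \emph{not} fire (except in the degenerate case $n=2$): condition (F3) requires the two consecutive nodes to be \emph{unconnected}, not merely adjacent in the drawing with nothing drawn between them. What actually happens---and what the paper's proof records---is a cascade: eliminating $\ts_1\ts_2\ts_1$ creates the fork $\ts_3\ts_4\ts_3$, whose elimination creates $\ts_5\ts_6\ts_5$, and so on; only when the last fork $\ts_{n-1}\ts_n\ts_{n-1}$ is eliminated do the two $\ts_{n+1}$-nodes lose their last connecting element and become unconnected, so (F3) fires exactly once, at the far column $n+1$ (or at column $1$ if one starts from the right fork), not at column $i\pm2$ next to $f$. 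Your appeal to ``the alternation of $H(\tw)$'' to rule out further occurrences of (F3) is the right instinct but does not substitute for this bookkeeping.
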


\begin{proof} For the sake of simplicity, we assume $f$ is a left fork. Since $\gamma(H(\tw),f)=\gamma_{\infty}$ is a cycle, then $n$ is even and $f$ identifies a complete horizontal subheap in $H(\tw)$, see Figures~\ref{Fig_Regions} (right). If  we apply Fork elimination~\ref{forkelimination} to the  left fork $\ts_1\ts_2\ts_1$ and we repeat it for  all the other forks $\ts_i\ts_{i+1}\ts_i$, $i=3,5,\ldots,n-1$ in the same horizontal region, then,  when the last fork $\ts_{n-1} \ts_n \ts_{n-1}$ has been eliminated, the two nodes labeled $\ts_{n+1}$ in the horizontal region are unconnected, and so step (F3) occurs. Hence by \eqref{equality} in Theorem~\ref{heap-reduction}, we obtain $\delta=d_{n+1}^2$ in the factorization of $d(\tw)$ in the quotient algebra or equivalently a loop appears in its realization as a concrete diagram.
\end{proof}

Let $\tw \in (\ALT)$ and $d=d(\tw)=\delta^{\alpha(\tw)} d_{\tau(\tw)}$. Let $\mathcal{E}(d)$ be the multiset of edges of $d$, namely the edges of  $d_{\tau(\tw)}$ together with $\alpha(\tw)$ undecorated loops $\mathcal{L}$.

\begin{Proposition}\label{prop:E}
Let $\tw \in (\ALT)$. The map $E:\Snakes(H(\tw)) \longrightarrow \mathcal{E}(d(\tw))$ defined by
$$\gamma \mapsto (h,k)_\gamma \quad \mbox{and} \quad \gamma_\infty \mapsto \mathcal{L}$$ is a one-to-one correspondence.
\end{Proposition}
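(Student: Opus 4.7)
The plan is to split the proof into three parts: well-definedness of $E$, injectivity of $E$, and equality of the cardinalities $|\Snakes(H(\tw))| = |\mathcal{E}(d(\tw))|$, after which bijectivity follows. For well-definedness, the non-degenerate non-cycle cases $\gamma = \gamma_{x,y}$ and $\gamma = \gamma_{\vec{x,y}}$ are exactly Theorem~\ref{prop-hk}, and the cycle case $\gamma = \gamma_\infty$ is Theorem~\ref{prop:cycle}. The three degenerate cases need only a direct inspection of the diagram product. For $\gamma = \gamma_x$, a maximal isolated node $\ts_x$ can be chosen as the leftmost letter of some reduced expression for $\tw$, and the top half-circle $(x,x+1)$ of the corresponding simple diagram $d_x$ is untouched by all subsequent multiplications (none of the remaining factors act on the top of column $x$), hence persists in $d_{\tau(\tw)}$; the symmetric argument handles $\gamma_{x'}$ by using the rightmost letter; and an empty column $\gamma_{\check{x}}$ means that $\ts_x$ never appears in any reduced expression, so nodes $x$ and $x'$ of the standard box are never acted upon and the vertical edge $(x,x')$ belongs to $d(\tw)$.

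For injectivity, I would exploit the fact that the snake paths in $\Snakes(H(\tw))$ partition the edges of the Hasse diagram of $H(\tw)$, together with disjoint degenerate data (the isolated maximal nodes, the isolated minimal nodes, and the empty columns). Therefore two distinct snakes are supported on disjoint combinatorial information and cannot coincide. Moreover, from $(h,k)_\gamma$ together with the north/south face information, Definition~\ref{def-hk} lets one recover the terminal labels $x$ and $y$ and the direction of the last step at each end; since the snake algorithm constructs $\gamma$ deterministically once these endpoints and initial directions are fixed, $\gamma$ is recoverable from its image. Finally, cycle snakes map to loops while non-cycle snakes map to non-loop edges $(h,k)$, so no confusion can occur between the two types.

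It remains to equate the cardinalities. The multiset $\mathcal{E}(d(\tw))$ consists of the $\alpha(\tw)$ loops together with the $n+2$ non-loop edges of $d_{\tau(\tw)}$, the latter being a perfect matching of the $2(n+2)$ boundary nodes of the standard box. By Theorem~\ref{prop:cycle} the cycle snakes biject with loops, contributing exactly $\alpha(\tw)$ to $|\Snakes(H(\tw))|$. What remains is to verify that the non-cycle snakes number precisely $n+2$, which amounts to showing that the assignment $\gamma \mapsto \{h,k\}$ exhausts the $2(n+2)$ boundary nodes of the box without repetition. I expect this to be the main obstacle: one must do the combinatorial bookkeeping that reconciles the four cases of Definition~\ref{def-hk}, the back-and-forth behavior (UD1.2) of snakes bouncing off the two extremal columns, and the interplay between left and right forks in complete horizontal subheaps (where exactly two boundary nodes are ``absorbed'' into a loop, see Theorem~\ref{prop:cycle}, to keep the count balanced). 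An induction on the number of columns of $H(\tw)$, or on the number of fork eliminations performed by $\cru$, should make this explicit. Once the endpoint-exhaustion claim is established, the combination of injectivity and matching cardinalities forces $E$ to be a bijection.
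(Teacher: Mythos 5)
Your well-definedness and injectivity arguments track the paper closely: well-definedness is indeed Theorems~\ref{prop-hk} and \ref{prop:cycle} plus a direct check for the degenerate paths, and the paper's injectivity argument is exactly your observation that two disjoint non-degenerate snakes cannot reach their starting vertex $B_x$ with the same edge, so their $h$-nodes differ. The divergence, and the gap, is in your third part. The paper does \emph{not} count; it proves surjectivity directly: given an edge $e=(h,k)$ of $d$ with $k\neq h\pm1$ and $k\neq h'$ (say $h$ on the north face), one locates a non-maximal node $B_x$ labeled $\ts_{h-1}$ or $\ts_h$ on the northern border of $H$, takes the unique snake path through the corresponding up or down step, and invokes Theorem~\ref{prop-hk} to conclude that its image must be $e$; edges of the form $(h,h+1)$ and $(h,h')$ are handled by the degenerate snakes. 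Your counting route leaves precisely this content unproved: you yourself flag the claim that the non-cycle snakes exhaust the $2(n+2)$ boundary nodes without repetition as ``the main obstacle'' and offer only a plan (``an induction \dots should make this explicit''). That bookkeeping is not routine --- it has to handle the bouncing behavior of (UD1.2), the four endpoint cases of Definition~\ref{def-hk}, and the absorption of boundary nodes into loops --- and as written your proposal does not establish it.

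There is a second, smaller gap in the loop count. Theorem~\ref{prop:cycle} gives only one direction: every cycle $\gamma_\infty$ produces a loop. To conclude that cycles and loops are in bijection (equivalently, that the number of cycles is $\alpha(\tw)$) you also need the converse, namely that every loop of $d(\tw)$ arises from a complete horizontal subheap of $H(\tw)$. The paper proves this by noting that, since one starts from a \emph{reduced} expression, no factor $d_id_i$ ever occurs in $d_{i_1}\cdots d_{i_k}$, so a loop can only be created by a factor $d_1d_3\cdots d_{n+1}\cdot d_2\cdots d_n\cdot d_1d_3\cdots d_{n+1}$, which forces a subheap $H_\infty$ and hence a $\gamma_\infty$ in $\Snakes(H)$. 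Without this converse your count of the loop part of $\mathcal{E}(d(\tw))$ against the cycle part of $\Snakes(H(\tw))$ does not close. In short: the architecture (injectivity plus a cardinality match) is legitimate, but the two counting claims it rests on are exactly the nontrivial steps, and neither is carried out; the paper's direct surjectivity argument is the shorter path and is the one you should complete or adopt.
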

\begin{proof}
Let us denote $H:=H(\tw)$ and $d:=d(\tw)$. The map $E$ is well-defined by Theorems~\ref{prop-hk}, ~\ref{prop:cycle}, and observing that if $\ts_x$ is maximal or a minimal element in $H$, then the non-propagating edge $(x,x+1)$ or $(x',(x+1)')$ is in $d(H)$, while if $\ts_x$ and $\ts_{x+1}$ are missing in $H$ then a vertical edge $(x,x')$ occurs in $d$.   
\smallskip

From Theorem~\ref{prop:cycle}, it follows that for each cycle in $\Snakes(H)$ we have an undecorated loops $\mathcal{L}$ in $d$. Viceversa, since we started from a reduced expression $\ts_{i_1}\cdots \ts_{i_k}$ of $\tw$, the  product $d_id_i$  never occurs in the expression $d_{i_1}\cdots d_{i_k}$ of $d$, hence if a loop appears in $d$, then it can only be obtained as the product $d_1\cdots d_{n+1} \cdot d_2\cdots d_n \cdot d_1\cdots d_{n+1}$ corresponding to a complete horizontal subheap $H_{\infty}$ in $H$, and so a snake path $\gamma_\infty$ is in $\Snakes(H)$.
\smallskip

Let $\gamma$ and $\gamma'$ two distinct snake paths that are not cycles. If one of them is degenerate, it is obvious that their images are different. So suppose both of them are non degenerate. Since they are disjoints they cannot reach the starting vertex $B_x$ with the same edge. Hence by Definition~\ref{def-hk} the corresponding node $h$ must be different. This settles the injectivity of $E$.
\smallskip

Let $e=(h,k)$ an edge of $d$ with $k$ different from $h\pm 1$ and  $h'$, for the sake of simplicity suppose $h$ on the north face. By the definition of $d$, there is a node $B_x$ labeled by $\ts_{h-1}$ or $\ts_h$ on the northern boarder of $H$ which is not a maximal one. More precisely, only one of these two situations occurs in the Hasse diagram of $H$: either $x=h$ and there is an up edge $\ts_h -\ts_{h+1}$ or $x=h-1$ and there is a down edge $\ts_{h-2} -\ts_{h-1}$. In both cases let us consider the unique snake path $\gamma$ starting in $B_x$ and containing such a step. By Theorem~\ref{prop-hk} the edge $(h,k)_\gamma$ belongs to $\mathcal{E}(d)$ and so necessarily $(h,k)_\gamma=e$. It is easy to see that the edges of type $(h,h+1)$ or $(h,h')$ are images of degenerate snake paths of $H$, and this settles the surjectivity.
\end{proof}

We end this section by showing two technical results that will be useful in the next section. 
Consider the order 
\begin{equation}\label{order}
1\succ_\ell 2\dots\succ_\ell n+2\succ_\ell(n+2)'\succ_\ell(n+1)'\succ_\ell\dots\succ_\ell 1'
\end{equation}
on the set of the nodes of the standard $n+2$-box.

\begin{Lemma}\label{successivo} Let $H(\tw) \in (\ALT)$, $\gamma_\infty \neq \gamma \in \Snakes(H)$ coming from a left fork $f$, and $(h,k)_\gamma$ the 
associated edge in  $d_{\tau(\tw)}$. Then $h \succ_\ell k$.
\end{Lemma}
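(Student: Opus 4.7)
The strategy is a case analysis based on which face of the standard $(n+2)$-box contains $h$ and $k$, according to Definition~\ref{def-hk}(a). Since $\gamma$ comes from a left fork, Definition~\ref{def:SA} gives that $\gamma^{top}$ starts at the top node of $f$ (labeled $\ts_1$) with a right-down step, while $\gamma^{bot}$ starts at the bottom node (also labeled $\ts_1$) with a right-up step. Consequently $h$ is on the north face if and only if the last step of $\gamma^{top}$ is downward, and on the south face if and only if it is upward; the analogous statement holds for $k$ and $\gamma^{bot}$. A key fact used throughout is that $\gamma^{top}$ lies strictly above $\gamma^{bot}$ in $H(\tw)$, since the two halves are disjoint and emanate from the top and bottom of $f$ respectively.

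I would split the argument into the four cases given by the pair of faces containing $(h,k)$. The case (north, south) is immediate from the definition of $\succ_\ell$. For the case (north, north), the goal is $h<k$ as integer indices; here I would use the factorization ${\bf \tw}_\gamma=\ts_1\ts_3\cdots \ts_{2i+1}\cdot\ts_2\ts_4\cdots \ts_{2j}\cdot\ts_1\ts_3\cdots \ts_{2l+1}$ from the proof of Theorem~\ref{prop-hk}, combined with the disjointness of $\gamma^{top}$ and $\gamma^{bot}$, to compare the labels of the endpoints $B_x$ and $B_y$ and deduce $h<k$ via the formulas in Definition~\ref{def-hk}(a)(1)--(2). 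The case (south, south) is dual: both halves of $\gamma$ have wrapped around via (UD1.1)(a)--(b) before terminating, and the symmetric parity argument yields $h>k$ as integer indices, which is $h\succ_\ell k$ on the south face.

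The remaining case (south, north) must be excluded. The plan is a planarity/non-crossing argument: $\gamma^{top}$ ending going up would mean it has turned through some right fork via (UD1.1)(a)--(b), whereas $\gamma^{bot}$ ending going down has not; the resulting edge $(h,k)_\gamma$ would then be forced to cross an adjacent snake's edge in $d_{\tau(\tw)}$, contradicting the non-crossing structure of the loop-free part of $d(\tw)$ established in Theorem~\ref{prop-hk} and Proposition~\ref{prop:E}.

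The main obstacle is making this impossibility argument rigorous, together with the careful bookkeeping required for the turning cases (UD1.1)(a)--(b) in the (south, south) analysis, since each turn alters the parity relation between the direction of the terminal step of the path and the label of its endpoint.
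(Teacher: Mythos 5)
Your case division according to the faces containing $h$ and $k$ is exactly the paper's, and both your (north, south) case and your plan for (north, north) are in the spirit of the paper's argument (the paper argues directly on the terminal steps of $\gamma^{top}$ and $\gamma^{bot}$ rather than through the word factorization, but that is a minor difference). The genuine problem is the premise underlying your last two cases: you claim that a half-path terminates with an upward step, equivalently that its endpoint lands on the south face, \emph{only if} it has turned at a right fork via (UD1.1)(a)--(b). That is false. By Definition~\ref{def-hk} the face is decided solely by whether the last step is up or down, and an untouched rightward half can end with an up step. Concretely, for $H(\ts_1\ts_3\ts_2\ts_1)$ the half $\gamma^{top}$ is $\ts_1\to\ts_2\to\ts_3$ with last step right-up, so $h=4'$ is on the south face although no right fork is ever met; moreover $\gamma^{bot}$ stops at the $\ts_2$ with a right-up step, giving $k=3'$, so this is a (south, south) instance in which \emph{neither} half has wrapped around, contradicting your assertion that in that case both halves have. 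Consequently your (south, south) parity bookkeeping and your exclusion of (south, north) --- which opens with ``$\gamma^{top}$ ending going up would mean it has turned'' --- analyze the wrong configurations, and the planarity argument you sketch for (south, north) is not needed and is not what the paper does.

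The paper's mechanism for these two cases is the one you are missing: if $h$ is on the south face, then $\gamma^{top}$ stops at some column $x$ because there is no $\ts_{x+1}$ \emph{below} its endpoint (whether or not it has turned), and since $\gamma^{bot}$ runs below $\gamma^{top}$ and the $\{\ts_x,\ts_{x+1}\}$-chains of an alternating heap alternate, $\gamma^{bot}$ cannot even reach column $x$. In particular it can never turn, it must stop at some column $y<x$, and it must stop with a right-up step, because a right-down stop at an odd $y<x$ would require the absence of a $\ts_{y+1}$ above its endpoint, which is contradicted by the $\ts_{y+1}$ lying on $\gamma^{top}$. Hence $k$ is forced onto the south face with $k<h$, which settles (south, south) and rules out (south, north) in one stroke. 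If you want to keep your outline, replace the ``has turned'' criterion by the correct ``last step is up or down'' criterion, and supply the non-turning statement and the inequality $y<x$ for $\gamma^{bot}$ as above; a symmetric use of the same alternating-chain observation also gives the inequality $k>h$ (or $k+1>h$) that your (north, north) case needs.
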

\begin{proof}
	In order to see the inequality we need to distinguish some cases. If $h$ is in the north face and $k$ is in the south face we are done. If both $h$ and $k$ are in the north face, then $\gamma^{top}$ does not change direction and must stop with a right-down step in a node $B_{h-1}$ (see Definition~\ref{def-hk}(a)(1)). On the other hand, $\gamma^{bot}$ might stop  with  a right-down step in a node $B_{k-1}$ with $k>h$ if it does not change direction, or with a left-down step in a node $B_{k+1}$ with $k+1>h$ if it changes direction. In both cases $h\succ_\ell k$.
	
If $h$ is in the south face then either $\gamma^{top}$ does not change direction and ends with a right-up step in a node $B_{h-1}$ (see Definition~\ref{def-hk}(a)(2)) or ends with a left-up step in a node $B_{h+1}$ if it changes direction. In both cases, $\gamma^{bot}$ can not change direction and has to stop  with a right-up step in a node $B_{k-1}$ with $k-1<h-1$. Hence $k$ is in the south face and $h\succ_\ell k$.
\end{proof}

In particular, the previous lemma tells us that if $(h,k)_\gamma$ is a propagating decorated edge then $h$ is in the north face and $k$ in the south face of the standard box.

\begin{Lemma}\label{Lemmahk_nodeleft} Let $H(\tw) \in (\ALT)$, $f$ and $\bar{f}$ two left forks with associated snake paths $\gamma$ and $\bar{\gamma}$ that are not cycles. Let $f$ be  above $\bar{f}$ in our graphical representation of $H(\tw)$ and $(h,k)_\gamma$ and $(\bar{h},\bar{k})_{\bar{\gamma}}$ be the associated edges in $d_{\tau(\tw)}$. Then \begin{enumerate}
		\item If $h=\bar{h}$, then $k=\bar{k}$.
		\item If $h\not=\bar{h}$, then $h \succ_\ell k \succ_\ell \bar{h} \succ_\ell \bar{k}$.
	\end{enumerate}
	
\end{Lemma}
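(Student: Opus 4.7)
The argument splits naturally into the two cases $h=\bar h$ and $h\neq\bar h$.

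For part (1), observe that in any pseudo-diagram on the standard $(n+2)$-box each boundary node is the endpoint of exactly one edge. Hence if $h=\bar h$, the two edges $(h,k)_\gamma$ and $(\bar h,\bar k)_{\bar\gamma}$ of $d_{\tau(\tw)}$ must coincide, forcing $k=\bar k$.

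For part (2), the outer inequalities $h\succ_\ell k$ and $\bar h\succ_\ell\bar k$ follow immediately from Lemma~\ref{successivo} applied to $\gamma$ and to $\bar\gamma$. The substantive content is therefore the middle inequality $k\succ_\ell\bar h$, which says that inside $d_{\tau(\tw)}$ the edges $(h,k)_\gamma$ and $(\bar h,\bar k)_{\bar\gamma}$ are \emph{disjoint} (rather than nested) in the cyclic order $\succ_\ell$, with $(h,k)_\gamma$ lying clockwise-before $(\bar h,\bar k)_{\bar\gamma}$.

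The strategy for establishing $k\succ_\ell\bar h$ is to exploit the vertical separation of the two snake paths inside $H(\tw)$. Because $f$ lies above $\bar f$, the subpath $\gamma^{bot}$ travels in a half-horizontal region which sits above (and shares at most one horizontal line with) the half-horizontal region containing $\bar\gamma^{top}$; moreover these two subpaths share no edge of the Hasse diagram, as they belong to distinct elements of the snake-path partition produced by Proposition~\ref{prop:E}. Using Definition~\ref{def-hk}, the positions $k$ and $\bar h$ are read off from the terminal steps of $\gamma^{bot}$ and $\bar\gamma^{top}$ respectively, and a case analysis on whether each last step is of type right-down, right-up (or left-down, left-up in the direction-changing cases of (UD1.2)) shows that the $k$-node always precedes the $\bar h$-node in the clockwise order $\succ_\ell$.

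The main technical hurdle is the bookkeeping for paths that change direction in step (UD1.2) and may travel back and forth across the full width of $H(\tw)$; here one must carefully identify whether the terminal boundary node lies on the north or on the south face of the $(n+2)$-box and in which column. Once this verification is complete, chaining the three inequalities produces $h\succ_\ell k\succ_\ell\bar h\succ_\ell\bar k$, as required.
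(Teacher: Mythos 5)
Your plan gets the easy parts right and they match the paper: part (1) follows because a node of a pseudo-diagram lies on exactly one edge, and the two outer inequalities in part (2) are just Lemma~\ref{successivo} applied to $\gamma$ and $\bar\gamma$. But the entire substance of the lemma is the middle inequality $k\succ_\ell\bar h$, and there you never actually produce an argument: you state that ``a case analysis on whether each last step is of type right-down, right-up \dots shows that the $k$-node always precedes the $\bar h$-node'' and then concede that ``the main technical hurdle is the bookkeeping \dots once this verification is complete \dots''. Asserting that a case analysis will work, while flagging it as the unresolved hard part, is not a proof of that case analysis. As written, the proposal is a restatement of what must be shown, not a demonstration of it.

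Concretely, the missing idea is the paper's reduction that makes the case analysis tractable. The paper first treats the case where $f$ and $\bar f$ share a common node (so $\gamma^{bot}$ and $\bar\gamma^{top}$ start at the \emph{same} vertex of $H(\tw)$ and run through adjacent half-horizontal regions); there one can compare the two up-down paths column by column and argue, e.g.\ when both $k$ and $\bar h$ lie on the north face, that both paths cross nodes labeled $\ts_1,\dots,\ts_{k-1}$ and that $\bar\gamma^{top}$ cannot stop before $\gamma^{bot}$ does, giving $\bar h>k$, i.e.\ $k\succ_\ell\bar h$. The general case is then obtained by interpolating a chain of left forks $f=f_1,\dots,f_q=\bar f$ with consecutive forks sharing a node (such a chain exists because the $\ts_1$- and $\ts_2$-elements of an alternating heap alternate) and chaining the inequalities via transitivity of $\succ_\ell$. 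Your plan instead tries to compare $\gamma^{bot}$ and $\bar\gamma^{top}$ directly for arbitrarily separated forks; without the adjacency reduction, the claim that the relevant half-horizontal regions are vertically ordered and that the terminal columns compare correctly is much harder to justify, especially for paths that change direction under (UD1.2), which is exactly where you stop. You would need either to carry out that bookkeeping in full or to adopt the adjacent-fork-plus-chaining reduction.
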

\begin{proof}
	Since $h$ and $k$ are the nodes of a unique edge in $d_{\tau(\tw)}$, then the first equality is obvious. To show point (2), thanks to Lemma~\ref{successivo} we only need to prove that $k\succ_\ell \bar{h}$.
	We first prove the inequality for two forks with a common node.
	We need to distinguish some cases. If $k$ is in the north face and $\bar{h}$ is in the south face we are done. If both $k$ and $\bar{h}$ are in the north face, $k$ is obtained from $\gamma^{bot}$ with $B_{y}$, $y=k-1$ (see Definition~\ref{def-hk}(a)(1)), while $\bar{h}$ is obtained from $\bar{\gamma}^{top}$ ending in $B_x$ with $x=\bar{h}-1$ or $\bar{h}$ (see Definition~\ref{def-hk}(a)(1)(2)). In both cases, there are nodes in $H(\tw)$ labeled $\ts_1,\ldots,\ts_{k-1}$ that are crossed by $\gamma^{bot}$ and $\bar{\gamma}^{top}$. Since $\bar{h}$ is on the north face, the second path can not stop before the first one and so $\bar{h}>k$. If $\bar{h}$ and $k$ are in the south face the same argument with the roles of  $\gamma^{bot}$ and $\bar{\gamma}^{top}$ reversed works.
	
	Now if the two forks $f$ and $\bar{f}$ does not share a node, then we consider the sequence of forks $f=f_1,\ldots,f_q=\bar{f}$  such that  $f_i$ is  above $f_{i+1}$,  and with a common node.   By applying iteratively  (1) or (2) in the case of a common node we get the inequality.
\end{proof}

Now if we define the order 
\begin{equation}\label{order1}
n+2\succ_r n+1\succ_r\ldots 1\succ_r1'\succ_r\ldots\succ_r(n+2)'
\end{equation}
on the set of the nodes then we have the two following analogous results for right forks.

\begin{Lemma}\label{successivo2}
Let $H(\tw) \in (\ALT)$, $\gamma_\infty \neq \gamma \in \Snakes(H)$ coming from a right fork $f$, and $(h,k)_\gamma$ the 
associated edge in  $d_{\tau(\tw)}$. Then $h \succ_r k$.
\end{Lemma}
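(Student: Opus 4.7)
The plan is to mirror the proof of Lemma~\ref{successivo} using the manifest left-right symmetry of the snake algorithm. For a right fork $f$ whose top node is labeled $\ts_{n+1}$, the path $\gamma^{top}$ begins with a left-down step, and $\gamma^{bot}$ with a left-up step; both traverse nodes of \emph{decreasing} index, opposite to the left-fork case. Consequently, in Definition~\ref{def-hk} only cases (2) and (4) (without direction change) or cases (1) and (3) (if a direction change occurs) can apply to $h$ and $k$, with the roles of ``stops earlier'' versus ``stops later'' swapped relative to the left-fork proof. The order $\succ_r$ is precisely the one in which, on each face, larger indices dominate smaller ones, which is consistent with the observation that the path reaching fewer nodes ends at a larger index.

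Concretely, I would split into the same cases as in the proof of Lemma~\ref{successivo}. If $h$ lies in the north face and $k$ in the south face, the inequality $h \succ_r k$ is immediate from \eqref{order1}. If both $h$ and $k$ lie in the north face, I would argue that $\gamma^{top}$ must either stop without changing direction, ending with a left-down step at $B_{h}$ (so $h=x$ by Def.~\ref{def-hk}(a)(2)), or change direction once and stop with a right-down step at $B_{h-1}$ (so $h=x+1$). Meanwhile $\gamma^{bot}$, which cannot share a terminal node with $\gamma^{top}$, must stop strictly later (further left), giving $k < h$ in the usual ordering of indices, i.e. $h \succ_r k$. The subcase where both $h$ and $k$ lie in the south face is treated identically with $\gamma^{top}$ and $\gamma^{bot}$ exchanged. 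If $h$ lies in the south face, then $\gamma^{top}$ ends with a left-up or right-up step, and $\gamma^{bot}$ cannot change direction and must terminate at a smaller index in the south face, still yielding $h \succ_r k$.

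The only subtle step will be verifying that $\gamma^{bot}$ cannot ``overshoot'' $\gamma^{top}$: this uses the disjointness of the two parts of the snake path together with the alternation property of $H(\tw)$, exactly as in the proof of Lemma~\ref{successivo}. Since the whole argument is a formal symmetric transcription of that proof---replacing ``right'' by ``left'' throughout, and the order $\succ_\ell$ by $\succ_r$---I would expect to conclude simply by invoking this symmetry, rather than repeating the full case analysis.
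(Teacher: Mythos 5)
Your proposal is correct and matches the paper's treatment: the paper states Lemma~\ref{successivo2} without proof, simply remarking that it is the analogue for right forks of Lemma~\ref{successivo} under the reversed order $\succ_r$, which is exactly the left--right symmetry argument you carry out. Your partial transcription of the case analysis is consistent with Definition~\ref{def-hk} (with cases (2)/(4) replacing (1)/(3) when no direction change occurs), so nothing further is needed.
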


\begin{Lemma}\label{Lemmahk_noderight}
Let $H(\tw) \in (\ALT)$, $f$ and $\bar{f}$ two right forks with associated snake paths $\gamma$ and $\bar{\gamma}$ that are not cycles. Let $f$ be  above $\bar{f}$ in our graphical representation of $H(\tw)$ and $(h,k)_\gamma$ and $(\bar{h},\bar{k})_{\bar{\gamma}}$ be the associated edges in $d_{\tau(\tw)}$. Then \begin{enumerate}
		\item If $h=\bar{h}$, then $k=\bar{k}$.
		\item If $h\not=\bar{h}$, then $h \succ_r k \succ_r \bar{h} \succ_r \bar{k}$.
	\end{enumerate}
	\end{Lemma}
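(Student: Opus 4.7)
The plan is to mirror the proof of Lemma~\ref{Lemmahk_nodeleft} with the roles of ``left'' and ``right'' swapped and the order $\succ_\ell$ replaced by $\succ_r$, invoking Lemma~\ref{successivo2} where Lemma~\ref{successivo} was used. The key point is that the Snake algorithm (Definition~\ref{def:SA}) is symmetric under reflection of the heap across its vertical axis: for a right fork, $\gamma^{top}$ starts at the top node with a first left-down step, $\gamma^{bot}$ starts at the bottom node with a first left-up step, and every ``right'' in the left-fork analysis becomes a ``left''. Consequently, the $\succ_r$-orientation on the nodes of the standard $(n+2)$-box plays the role that $\succ_\ell$ played before.

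For (1), the same trivial argument works: $h$ and $k$ are by Definition~\ref{def-hk} the two nodes of one and the same edge of the diagram $d_{\tau(\tw)}$, hence determining $h$ determines $k$.

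For (2), I would first establish the inequality in the case where $f$ and $\bar f$ share a common node. By Lemma~\ref{successivo2} we have $h\succ_r k$ and $\bar h\succ_r \bar k$, so the only thing to prove is $k\succ_r \bar h$. As in the left-fork argument, I would split into cases depending on the face of $k$ and $\bar h$. If $k$ lies on the north face and $\bar h$ on the south face, the statement is immediate from~\eqref{order1}. If both lie on the north face, then $k$ arises from $\gamma^{bot}$ reaching a node $B_y$ with $y=k\pm 1$ (using Definition~\ref{def-hk}(a), now with ``left-up'' and ``right-up'' swapped appropriately for a right fork), while $\bar h$ arises from $\bar\gamma^{top}$ ending at some node $B_x$ with $x=\bar h$ or $\bar h-1$. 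Both paths must traverse the nodes labeled $\ts_{k+1},\ldots,\ts_{n+1}$ that lie in their respective horizontal regions. Since $\bar h$ is on the north face, $\bar\gamma^{top}$ cannot stop before $\gamma^{bot}$, forcing $\bar h<k$, i.e. $k\succ_r\bar h$. The case when both $k$ and $\bar h$ are on the south face is symmetric and uses the same comparison with the roles of $\gamma^{bot}$ and $\bar\gamma^{top}$ exchanged.

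Finally, to remove the assumption that $f$ and $\bar f$ share a node, I would insert a chain of right forks $f=f_1,f_2,\ldots,f_q=\bar f$ in which each consecutive pair $f_i,f_{i+1}$ shares a node and $f_i$ lies above $f_{i+1}$ in the graphical representation; such a chain exists because $H(\tw)$ is alternating and the two forks are connected through the rightmost columns. Applying the common-node case iteratively along this chain, together with (1) whenever two consecutive forks produce the same edge, yields the transitive chain of $\succ_r$-inequalities and hence the desired $h\succ_r k\succ_r\bar h\succ_r\bar k$. The main subtlety is the case split in the common-node step, since one must keep track of which direction each of $\gamma^{top}$, $\gamma^{bot}$, $\bar\gamma^{top}$, $\bar\gamma^{bot}$ can take under the right-fork version of Definition~\ref{def:SA}; but this is entirely parallel to the left-fork case and introduces no new phenomena.
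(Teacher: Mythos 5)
Your proposal is correct and matches the paper's intent exactly: the paper states Lemma~\ref{Lemmahk_noderight} without proof, declaring it the analogue of Lemma~\ref{Lemmahk_nodeleft} under the left--right reflection and the order $\succ_r$ of~\eqref{order1}, which is precisely the mirroring you carry out (including the appeal to Lemma~\ref{successivo2} in place of Lemma~\ref{successivo} and the chain-of-forks reduction to the common-node case). The only cosmetic slip is the value of $y$ when $k$ lies on the north face (for a right fork, Definition~\ref{def-hk}(a)(2) gives $y=k$ if $\gamma^{bot}$ does not change direction, and (a)(1) gives $y=k-1$ if it does, rather than $y=k\pm1$), but this does not affect the argument.
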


\section{Decoration algorithm}\label{sec:algdec}

In this section we will define an algorithm that adds decorations to the edges of $d(\tw)$ and we will prove that the obtained decorated diagram is exactly the image of the basis element $b_{\tw}$ through the algebra morphism $\tilde{\theta} : \TL(\tC_n) \longrightarrow \GD(\tC_n)$, providing a combinatorial description for such a map.
\smallskip

\subsection{Alternating heaps}

Theorems~\ref{prop-hk} and \ref{prop:cycle} allow the definition of the following algorithm.

\begin{Definition}[Decoration algorithm for $(\ALT)$]\label{algorithmdecoration}
	Let $\tw \in (\ALT)$ and consider the undecorated concrete diagram $d(\tw)$.
\begin{enumerate}
	\item[(D1)] If there are no forks in $H(\tw)$, then go to step (D4).
	\item[(D2)] Otherwise, fix a fork $f$ in $H(\tw)$. 
	\begin{enumerate}
	\item[1.] If $\gamma(H(\tw),f)=\gamma_{\infty}$, then replace a $\mathcal{L}$ by a decorated loop $\lott$.
	\item[2.] If $\gamma(H(\tw),f)=\gamma$, then going from $h$ to $k$, add a decoration $\tn$ (resp. $\tb$) to the edge $(h,k)_{\gamma}$ of $d_{\tau(\tw)}$, each time the oriented path $\gamma$ crosses a left (resp. right) fork, taking into account the order in which $\gamma$ crosses them. 
	\end{enumerate}
	
	\item[(D3)] Iterate the above procedure starting from a fork that has not been crossed by $\gamma$.
	
\item[(D4)] Add a $\bullet$ (resp. $\circ$) as first decoration to the edges starting from $1$ and  $1'$, (resp. $n+2$ and from $(n+2)'$) in the diagram $d_{\tau(\tw)}$, except if there is an edge joining $1$ to $1'$ (resp. $n+2$ to $(n+2)'$).   
	\smallskip
	\item[(D5)] 
	
	Set $\mathtt{dec}_A(\tw)$ the obtained diagram.
	\end{enumerate}
\end{Definition}

\begin{figure}[h]
\centering
\includegraphics[width=0.7\linewidth]{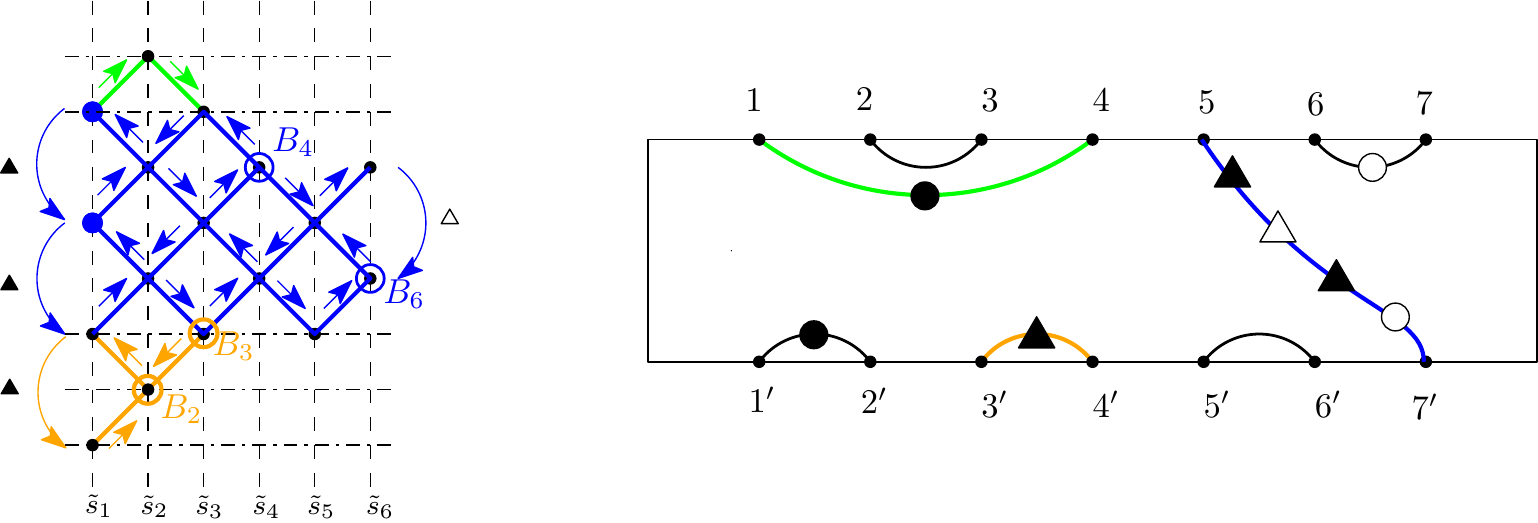}
\caption{How to decorate the edges $(5,7')$ and $(4',3')$.} \label{Example-N1-N}
\end{figure}

\begin{Remark}\label{ordine-decorazioni}\
\begin{itemize}
\item[(a)] The Decoration algorithm does not depend on the order in which the forks have been chosen. In fact, by (D3) if two forks are used in the algorithm, then they identify distinct edges to be decorated. Furthermore, if a left and a right fork give rise to equal snake paths with opposite orientation (see Remark~\ref{rem:cycle}(a)) and Figure~\ref{Fig_Regions}(c), then step (D2) guarantees that the obtained decorated edges are equal. 

\item[(b)] Combining Lemma~\ref{Lemmahk_nodeleft} (2) and the Decoration algorithm for $(\ALT)$, it follows that if we apply the algorithm to the left forks ordered from top to bottom, then the edges of $d_{\tau(\tw)}$ are decorated following the order in \eqref{order}. Namely, in the hypotheses of Lemma~\ref{Lemmahk_nodeleft}, if $h\succ_\ell \bar{h}$ the edge $(h,k)_\gamma$ is decorated before $(\bar{h},\bar{k})_{\bar{\gamma}}$. 
In particular, if the top left fork decorates the edge $(\bar{h},\bar{k})_{\bar{\gamma}}$ with a $\tn$ decoration, then all the edges $(h,k)$, with $h\succ_\ell \bar{h}$ have not $\tn$ decorations.
\end{itemize}
\end{Remark}

\begin{Theorem}\label{thm-ALT}
Let $\tw\in (\ALT)$. Then $\mathtt{dec}_A(\tw)=\tilde{\theta}(b_{\tw})$. 
\end{Theorem}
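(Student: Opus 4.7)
The proof plan rests on two results already established in the excerpt: by Remark~\ref{step0}(1) the underlying undecorated structure of $\tilde{\theta}(b_{\tw})$ is precisely $d_{\tau(\tw)}$ together with $\alpha(\tw)$ loops, and by Proposition~\ref{prop:E} these edges and loops are in canonical bijection with $\Snakes(H(\tw))$. Combining this with Theorem~\ref{heap-reduction}, one sees that $\mathtt{dec}_A(\tw)$ and $\tilde{\theta}(b_{\tw})$ share the same undecorated skeleton. Thus the entire proof reduces to matching, snake path by snake path, the decorations produced by the algorithm with the decorations produced by the product $\tilde{\theta}(b_{\tw})=\td_{i_1}\cdots\td_{i_k}$.

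The plan is to proceed by induction on the number of forks in $H(\tw)$. In the base case where $H(\tw)$ has no forks, $\tw$ is in fact alternating of ``type $A$'' in the sense of Theorem~\ref{heaps-typeA}, so by Remark~\ref{step0}(2) the heaps $H(\tw)$ and $H(\tau(\tw))$ are isomorphic. The only potentially decorated simple factors are $\td_1$ and $\td_{n+1}$, each occurring at most once; the $\bullet$ decorations carried by $\td_1$ can only survive on the arcs incident to nodes $1$ and $1'$ (since there is no second $\ts_1$ to meet them via a fork relation), and similarly for $\td_{n+1}$. This matches exactly step (D4). For the inductive step, fix a fork $f$ in $H(\tw)$ and, using the alternating property together with the commutation class of $\tw$, choose a linear extension of $H(\tw)$ that places the labels crossed by $\gamma(H(\tw),f)$ contiguously. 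Then isolate the corresponding contiguous product of simple diagrams, evaluate it using relations (c1)--(c4) and the defining relations of $\widehat{\mathcal P}^{LR}_{n+2}(\Omega)$ in Figure~\ref{C-Relations}, and identify the decorations it writes on $(h,k)_\gamma$.

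Concretely, each crossing of a left fork $\ts_1\ts_2\ts_1$ by $\gamma$ corresponds to a factor $\td_1\td_2\td_1$; the two $\bullet$'s carried by the outer $\td_1$'s attach to the travelling edge, and a short calculation using the black--black$=\blacktriangle$ relation in Figure~\ref{C-Relations} converts them into a single $\tn$. The symmetric computation with $\td_{n+1}\td_n\td_{n+1}$ writes a $\tb$ on the edge for each right fork crossed. When $\gamma=\gamma_\infty$, the complete horizontal subheap $H_\infty$ produces exactly the product $\td_1\td_3\cdots\td_{n+1}\cdot\td_2\td_4\cdots\td_n\cdot\td_1\td_3\cdots\td_{n+1}$, and a direct expansion shows this yields a closed loop decorated as $\lott$, matching step (D2.1). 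Finally, any remaining boundary $\bullet$ or $\circ$ comes from the outermost $\td_1$ or $\td_{n+1}$ factors surviving all cancellations, matching step (D4).

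The subtle point—and the main obstacle—is reconciling the \emph{order} in which the algorithm writes decorations along an edge with the vertical positions these decorations acquire from the diagrammatic product, in particular when $\mathbf{a}(d)=1$. This is where Lemmas~\ref{successivo}--\ref{Lemmahk_noderight} and Remark~\ref{ordine-decorazioni}(b) enter: reading left forks from top to bottom in $H(\tw)$ corresponds, via $\succ_\ell$, to the order of decorations from the north to the south side of the propagating edge, which is precisely the order in which the multiplication $\td_{i_1}\cdots\td_{i_k}$ deposits them. A symmetric argument for right forks handles the $\tb$ decorations via $\succ_r$. Combining all cases along the partition of edges given by $\Snakes(H(\tw))$ yields $\mathtt{dec}_A(\tw)=\tilde{\theta}(b_{\tw})$.
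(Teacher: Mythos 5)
Your proposal follows essentially the same route as the paper's proof: establish that the undecorated skeletons agree, settle the fork-free case via step (D4), obtain each $\lott$ loop from the complete horizontal factor $\td_1\td_3\cdots\td_{n+1}\cdot\td_2\td_4\cdots\td_n\cdot\td_1\td_3\cdots\td_{n+1}$, and for each snake path isolate the contiguous factor of simple diagrams and convert each $\td_1\td_2\td_1$ (resp.\ $\td_{n+1}\td_n\td_{n+1}$) into a $\tn$ (resp.\ $\tb$) exactly as in~\eqref{eq:calcoli}. The only cosmetic differences are that your ``induction on the number of forks'' is really the paper's direct snake-path-by-snake-path case analysis (the inductive hypothesis is never invoked), and that Lemmas~\ref{successivo}--\ref{Lemmahk_noderight} govern the order among \emph{distinct} decorated edges rather than the order of decorations \emph{within} one edge, which is already fixed by step (D2.2) and by the order of the factors $\td_1\td_2\td_1$, $\td_{n+1}\td_n\td_{n+1}$ in the product.
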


\begin{proof}
Given $\tw=\ts_{i_1}\cdots \ts_{i_k} \in (\ALT)$, we consider $\tilde{\theta}(\tw):=\tilde{\theta}(b_{\tw})=\td_{i_1}\cdots \td_{i_k} \in \GD(\tC_n)$. By Definition~\ref{def:51}, it easily follows that the diagram $\tilde{\theta}(\tw)$ without the decorations is equal to $d(\tw)$. 
\smallskip

If $H(\tw)$ does not contain any fork, then there are no decorated loops neither in $\mathtt{dec}(\tw)$ nor in $\tilde{\theta}(\tw)$. Moreover, at most one occurrence of both $\ts_1 $ and $\ts_{n+1}$ appears in $\ts_{i_1}\cdots \ts_{i_k}$. Steps (D1) and (D4) of the decoration algorithm apply and   $\mathtt{dec}_A(\tw) $ has at most a $\bullet$ and $\circ$ decorations on the edges starting on $1$, $1'$, $n+2$ and $(n+2)'$. On the other hand, in $\tilde{\theta}(\tw)$ either there is no occurrence of $\td_1$ (resp. $\td_{n+1}$) hence there is an undecorated edge from 1 to $1'$ (resp. $n+2$ to $(n+2)'$) in it, or there is a single occurrence of $\td_1$ (resp. $\td_{n+1}$), that adds a $\bullet$ (resp. $\circ$) decoration to both the edges starting from 1 and $1'$ (resp. $n+2$ to $(n+2)')$ of $\tilde{\theta}(\tw)$. It is easy to see that $\mathtt{dec}_A(\tw)=\tilde{\theta}(\tw) $ in this case.
\smallskip

Now by construction $\mathtt{dec}_A(\tw)$ has $\alpha(\tw)$ decorated loops $\lott$. On the other hand, the $\alpha(\tw)$ loops of $\tilde{\theta}(\tw)$ are also of the form $\lott$. In fact,  by 
Proposition~\ref{prop:E} any undecorated loop in $d(\tw)$ corresponds  to a complete horizontal subheap in $H(\tw)$. Hence in a reduced expression of $\tw$ there is a factor of the form $\ts_1\ts_3\cdots \ts_{n+1} \cdot \ts_2\ts_4\cdots \ts_{n} \cdot \ts_1\ts_3\cdots \ts_{n+1}$ that gives rise to a product 
\begin{equation}\label{reduc-expression-cycle}
\td_1\td_3\cdots \td_{n+1} \cdot \td_2\td_4\cdots \td_{n} \cdot \td_1\td_3\cdots \td_{n+1}
\end{equation}
of simple diagrams in $\tilde{\theta}(\tw)$. A direct computation shows that the corresponding diagram has all non-propagating edges $(1,2),(3,4),\ldots,(n+1,n+2)$, $(1',2'),(3',4'),\ldots,((n+1)',(n+2)')$ and a unique loop $\lott$. 

Consider now the case of not loop decorated edges. 

Let $f$ be a fork in $H(\tw)$ such that $\gamma(H(\tw),f)=\gamma_{x,y}=\gamma$, and by the sake of simplicity assume $f=\ts_1\ts_2\ts_1$ and that $\gamma$ crosses only $f$, as in the first case of the proof of Proposition~\ref{prop-hk}. Therefore, a reduced expression for $\tw$ is given by $\bf{\tw_1 \cdot \tw_\gamma \cdot \tw_2}$ with $\bf \tw_\gamma$ as in \eqref{reduc-expression-H-epsilon}. The edge $(h,k)_\gamma$ in $d_{\tau(\tw)}$ is decorated in (D2) by a $\tn$, since $\gamma_{x,y}$ crosses only the initial fork $f$. Now again a direct computation shows that 
\begin{eqnarray}\label{eq:calcoli}
\tilde{\theta}(\tw)& = & \tilde{\theta}({\bf \tw_1}) \cdot \td_1\td_3\cdots \td_{2i+1}\td_2\td_4\cdots \td_{2j}\td_1\td_3\cdots \td_{2l+1} \cdot \tilde{\theta}({\bf \tw}_2)\\
& = & \tilde{\theta}({\bf \tw_1}) \cdot \td_3\cdots \td_{2i+1}\cdot \td_1 \td_2 \td_1 \cdot \td_4\cdots \td_{2j} \td_3\cdots \td_{2l+1} \cdot \tilde{\theta}({\bf \tw_2})\nonumber
\end{eqnarray}
contains the edge $(h,k)_\gamma$ with a $\tn$ decoration coming from the two consecutive $\bullet$ corresponding to the factor $\td_1 \td_2 \td_1$. 
As in the proof of Proposition~\ref{prop-hk}, the multiplication by the simple diagrams corresponding to ${\bf \tw}_1$ and ${\bf \tw}_2$ does not affect neither $(h,k)_\gamma$ nor its decorations.
\medskip

Finally, if $\gamma_{x,y}$  crosses at least a right and a left fork, we distinguish two cases. 
\smallskip

If $n$ is even, as in proof of Proposition~\ref{prop-hk},  suppose that $\gamma^{top}$ reaches column $n+1$ and goes back, while $\gamma^{bot}$ stops before column $n+1$ inside the horizontal region determined by $f$. The decoration algorithm adds a single $\tb$ and $\tn$ to the edge $(h,k)_\gamma$. 
In $\tilde{\theta}(\tw)$ the same computation an in \eqref{eq:calcoli} with  $2i=n$, and $2l<n$ gives the same decorations on $(h,k)_\gamma$.  
\smallskip

If $n$ is odd, then the edge $(h,k)_\gamma$ is decorated by the Decoration algorithm with an alternating sequence of black and white decorations, or vice-versa. More precisely, each time $\gamma_{x,y}$ crosses a fork $f$, Step (D2) adds a $\tn$ or $\tb$ decoration to $(h,k)_\gamma$, depending if the fork is left or right, while Step (D4) adds a $\bullet$ as initial decoration and a $\circ$ just before node $k$ (if $(h,k)_\gamma$ is not a vertical edge). See Figure~\ref{Example-N1-N}.

Consider now $\tilde{\theta}(\tw)=\tilde{\theta}(\bf{\tw_1}) \cdot \tilde{\theta}(\bf{\tw_\gamma})\cdot \theta(\bf{\tw_2}).$ As before, the product by $\tilde{\theta}(\bf\tw _1)$ and $\tilde{\theta}({\bf \tw}_2)$ does not affect the edge $(h,k)_\gamma$ and its decorations.
In the product $d({\bf \tw}_\gamma)$ in \eqref{complicated} (see proof of Proposition \ref{prop-hk}), consider the factor $(d_\mathcal{E} \cdot d_\mathcal{O})^\ell$  with $\ell \geq 1$.
If  $\ell=1$,  then  $\gamma_{x,y}$ crosses exactly one left and one right fork and $\tilde{\theta}(\bf\tw_\gamma)$ has exactly one occurrence of $\td_1\td_2\td_1$ and  $\td_{n+1}\td_{n}\td_{n+1}$. Therefore in  $\tilde{\theta}({\bf \tw}_\gamma) $ the edge $(h,k)_\gamma$ is decorated with a $\tn$, a $\tb$ and a $\bullet$ as initial decoration and a $\circ$ just before node $k$ (if $(h,k)_\gamma$ is not a vertical edge).
If $\ell\geq 2,$  then $\gamma_{x,y}$ crosses $\ell-1$ left and right forks and $\tilde{\theta}(\bf\tw_\gamma)$ has $\ell-1$ alternating occurrences of $\td_1\td_2\td_1$ and  $\td_{n+1}\td_{n}\td_{n+1}$. Therefore in  $\tilde{\theta}({\bf \tw}_\gamma) $ the edge $(h,k)_\gamma$ is decorated with   $\ell-1$ alternating sequences ($\tn ,\tb)$ and  a $\bullet$ as initial decoration and a $\circ$ just before node $k$ (if $(h,k)_\gamma$ is not a vertical edge) and so, also in this case, $\mathtt{dec}_A(\tw)=\tilde{\theta}(\tw)$.
\end{proof}

\begin{Corollary}\label{cor:immagineA}
Let $\tw \in (\ALT)$ and $d=\mathtt{dec}_A(\tw)$. If $(1,1')$ (resp. $(n+2,(n+2)')$) is in $d$ then it is not decorated. Otherwise, the edges starting from 1 and $1'$ (resp. $n+2$ and $(n+2)'$) have both a $\bullet$ (resp. $\circ$) as first decoration. 
\end{Corollary}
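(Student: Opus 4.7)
The plan is to derive the statement directly from step~(D4) of the decoration algorithm (Definition~\ref{algorithmdecoration}), after a brief sanity check that step~(D2) cannot place any decoration on the vertical edge $(1,1')$ when it exists. I will argue the $\bullet$-side only; the claim concerning nodes $n+2,(n+2)'$ is entirely symmetric, with right forks and $\circ$-decorations replacing left forks and $\bullet$'s, appealing to Lemmas~\ref{successivo2} and~\ref{Lemmahk_noderight} in place of Lemmas~\ref{successivo} and~\ref{Lemmahk_nodeleft}.

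The key observation is the dichotomy: $(1,1')$ is an edge of $d$ if and only if $H(\tw)$ contains no vertex labelled $\ts_1$, since each $\ts_1$-vertex in a reduced expression of $\tw$ contributes a factor $d_1$ that severs any vertical edge at column $1$. Assume first that $(1,1')$ is in $d$. Then no fork of $H(\tw)$ involves $\ts_1$, and by inspecting the endpoint formulas of Definition~\ref{def-hk}, no edge $(h,k)_\gamma$ arising from a snake path on a fork can have $h=1$ or $k=1'$. Hence step~(D2) places no decoration on $(1,1')$, and step~(D4) explicitly excludes this edge; so $(1,1')$ is undecorated.

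Assume now that $(1,1')$ is not in $d$. Then $\ts_1$ appears in $H(\tw)$, and the edges emanating from $1$ and from $1'$ in $d$ are distinct. Step~(D4) deposits a $\bullet$ as the first decoration on each of them, placed adjacent to the corresponding boundary node; this placement precedes, in the reading convention from $i$ to $j'$ (for propagating edges) or $i$ to $j$ (for non-propagating ones), any $\tn$-decoration that (D2) may have laid down along the same edge. The only mildly delicate point of the whole argument is the endpoint-formula check in the first case, which is pure bookkeeping from Definition~\ref{def-hk} and can equivalently be read off from the bijection $E$ of Proposition~\ref{prop:E}.
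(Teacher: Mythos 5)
Your argument is correct and is essentially the intended one: the paper states this corollary without proof, as an immediate consequence of steps (D2) and (D4) of the decoration algorithm, which is exactly what you verify (that (D2) can never touch a vertical edge $(1,1')$, and that (D4) does the rest). The only soft spot is the one-line ``severs the vertical edge'' justification of the dichotomy, which on its own is not airtight (cf.\ $d_2d_1d_2=d_2$ in general products); but as you note, both the dichotomy and the endpoint check are cleanly covered by the bijection $E$ of Proposition~\ref{prop:E}, which identifies $(1,1')$ as the image of the degenerate path $\gamma_{\check{1}}$, so no gap remains.
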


\subsection{Left and right peaks}

In this section,  we consider $\tw \in (\LP), (\RP)$ or $(\LRP)$ which is not a pseudo zigzag. For brevity, we denote $(\LRPZ):=(\LRP) \setminus (\PZZ)$. We recall that $d_{\tau(\tw)}$:
		\begin{itemize}
			\item starts with $j_\ell-1$ vertical edges if $\tw\in (\LP), (\LRPZ)$, and 
			\item ends with $j_r-1$ vertical edges if $\tw \in (\RP), (\LRPZ)$. 
		\end{itemize}

\begin{Definition}[Semireduced heap]
If $H \in (\LP), (\RP)$ or $(\LRPZ)$ we will denote by $H_{sr}$, respectively, the alternating heap $H_{\{{\ts}_{j_\ell}\rightarrow \}}$, $H_{\{\leftarrow {\ts}_{j_r}\}}$ or  $H_{\{\ts_{j_\ell},\ldots,\ts_{j_r}\}}$ of Definition~\ref{def:famillesCtilde}, and we will refer to $H_{sr}$ as the {\em semireduced heap} of $H$. 
\smallskip

Note that $H_{sr}$ is exactly the heap obtained from $H$ by applying iteratively Fork elimination~\ref{forkelimination} to the forks (see Figure~\ref{Example-N5}):
\begin{itemize}
	\item $\ts_{j+1}\ts_j\ts_{j+1}$, for $j=1,\dots,j_\ell-1$  ;
	\item $\ts_{j}\ts_{j+1}\ts_{j}$, for $j=n,\dots,j_r$.
\end{itemize}
\end{Definition}

If $\tw \in (\LP), (\RP)$ or $(\LRPZ)$, and if $H=H(\tw)$, we define  $\tw_{sr}$ the unique alternating element such that  $H(\tw_{sr})=H_{sr}$. Observe that $d_{\tau(\tw)}=d_{\tau(\tw_{sr})}$, since $H_{sr}$ appears in an intermediate step of the reduction algorithm, and it is equal to $\mathtt{dec}_A(\tw_{sr})$ without decorations.

\begin{Definition}[Decoration algorithm for $(\LP), (\RP)$ or $(\LRPZ)$]\label{algdecpeak}
Let $\tw \in (\LP), (\RP)$ or $(\LRPZ)$.
\begin{enumerate}
	\item[(DP1)] Consider $\mathtt{dec}_A(\tw_{sr})$;
	\item[(DP2)] Add a $\tn$ decoration on the edge  $(1,1')$, if $\tw \in $ (LP) or $(\LRPZ)$;
	\item[(DP3)] Add a $\tb$ decoration on the edge  $(n+2, (n+2)')$, if $\tw \in $ (RP) or $(\LRPZ)$;
	\item[(DP4)] Denote by $\mathtt{dec}_P(\tw)$ the obtained decorated diagram. 
	\end{enumerate}
\end{Definition}

\begin{Theorem}
Let $\tw \in (\LP), (\RP)$ or $(\LRPZ)$. Then $\mathtt{dec}_P(\tw)=\tilde{\theta}(b_{\tw})$.
\end{Theorem}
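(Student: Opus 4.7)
The plan is to isolate the peak contribution via a carefully chosen reduced expression, apply Theorem~\ref{thm-ALT} to the alternating semireduced factor, and verify by a direct diagrammatic computation that the peak word has the effect of replacing the simple diagram $\td_{j_\ell}$ by itself decorated with a single $\tn$ on its vertical edge $(1,1')$ (and symmetrically with a $\tb$ on $(n+2,(n+2)')$ for a right peak). The case $\tw\in(\RP)$ will be entirely symmetric, and $(\LRPZ)$ will follow by combining the two, since the left and right peaks act on the disjoint column blocks $\{1,\ldots,j_\ell\}$ and $\{j_r,\ldots,n+2\}$ and therefore contribute their decorations independently.

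I would first treat $\tw\in(\LP)$ with parameter $j_\ell$. Condition (LP2) forbids any $\ts_{j_\ell+1}$-element from lying between the two $\ts_{j_\ell}$-elements of the peak, which combined with (LP1) is enough to guarantee that $\tw$ admits a reduced expression of the form
\[
\mathbf{\tw} \ = \ \mathbf{u}\cdot \ts_{j_\ell}\ts_{j_\ell-1}\cdots\ts_{2}\ts_{1}\ts_{2}\cdots\ts_{j_\ell-1}\ts_{j_\ell} \cdot\mathbf{v},
\]
where $\mathbf{u}\cdot\ts_{j_\ell}\cdot\mathbf{v}$ is a reduced expression for $\tw_{sr}$ (the peak is read top-down and inserted between the parts of $H(\tw)$ above and below it in a linear extension). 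Writing $U,V$ for the diagrams of $\mathbf{u},\mathbf{v}$ and setting $Q_k := \td_k\td_{k-1}\cdots\td_1\cdots\td_{k-1}\td_k$ for the full peak diagram at level $k$, we thus obtain $\tilde{\theta}(b_{\tw}) = U\cdot Q_{j_\ell}\cdot V$, while Theorem~\ref{thm-ALT} gives $\mathtt{dec}_A(\tw_{sr}) = U\cdot\td_{j_\ell}\cdot V$.

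The main obstacle and technical heart of the argument is the following claim, which I plan to prove by induction on $k$: the diagram $Q_k$ equals $\td_k$ with a single $\tn$-decoration added on its vertical edge $(1,1')$. For the base case $k=2$, a direct computation of $\td_2\td_1\td_2$ shows that the two $\bullet$-decorated non-propagating edges of $\td_1$ merge into the block $\bullet\bullet=\tn$ on a newly-created propagating edge $(1,1')$, while the remainder of $\td_2$ is unchanged. For the induction step the factorisation $Q_{k+1}=\td_{k+1}\cdot Q_k\cdot\td_{k+1}$ reduces the problem to tracing edges through a triple product with middle factor $\td_k$ decorated by $\tn$ on $(1,1')$; since $\td_{k+1}$ acts as the identity on column~$1$ and only reroutes columns $k+1$ and $k+2$ of the middle factor, conjugation by $\td_{k+1}$ turns $\td_k$ into $\td_{k+1}$ while leaving the $\tn$ on $(1,1')$ untouched.

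To conclude, condition (LP1) forces $\mathbf{u}$ and $\mathbf{v}$ to involve only generators in $\{\ts_{j_\ell+1},\ldots,\ts_{n+1}\}$, so $U$, $\td_{j_\ell}$ and $V$ all leave column~$1$ as a vertical undecorated edge; in particular $\mathtt{dec}_A(\tw_{sr})$ has a vertical undecorated edge $(1,1')$ (step (D4) of Definition~\ref{algorithmdecoration} places no $\bullet$ there because $(1,1')$ is present). Replacing $\td_{j_\ell}$ by $Q_{j_\ell}$ in the sandwich $U\cdot\td_{j_\ell}\cdot V$ therefore only affects column~$1$ by adding the $\tn$ carried by $Q_{j_\ell}$, which propagates unchanged through $U$ and $V$. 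Hence $\tilde{\theta}(b_\tw) = \mathtt{dec}_A(\tw_{sr})$ with a single $\tn$ added on $(1,1')$, which is exactly $\mathtt{dec}_P(\tw)$ by steps (DP1)-(DP2) of Definition~\ref{algdecpeak}.
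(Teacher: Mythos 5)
Your proof is correct and follows essentially the same route as the paper: isolate the peak factor $\ts_{j_\ell}\cdots\ts_1\cdots\ts_{j_\ell}$ in a reduced expression, observe that its diagram is $\td_{j_\ell}$ with a single $\tn$ on $(1,1')$, note that the surrounding factors avoid columns $1,\ldots,j_\ell$ and hence leave that decoration untouched, and invoke Theorem~\ref{thm-ALT} for the semireduced alternating part. The only difference is that you prove the identity $\td_{j_\ell}\cdots\td_1\cdots\td_{j_\ell}=\td_{j_\ell}$ with a $\tn$ by induction on $j_\ell$, where the paper simply asserts it as a direct computation.
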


\begin{proof}
Let us consider first a left peak $\tw$. In this case a reduced expression for $\tw$ can be written as 
\begin{equation}\label{red-expression-LP}
{\bf \tw}_1 \cdot \ts_{j_\ell} \cdots \ts_1 \cdots \ts_{j_\ell} \cdot {\bf \tw}_2, 
\end{equation}
where ${\bf \tw}_1$ and ${\bf \tw}_2$ are alternating expressions non containing any $\ts_i$ for $i\leq j_\ell$.
Now the product $\td_{j_\ell} \cdots \td_1 \cdots \td_{j_\ell}$ is equal to the simple diagram $\td_{j_\ell}$ with a $\tn$ decoration on its first vertical edge $(1,1')$. Since ${\bf \tilde{w}}_1$ and ${\bf \tilde{w}}_2$  do not contain any $\ts_{i}$ for $i\leq j_\ell$, then 
$$ \tilde{\theta}(b_{\tw})=\td({\bf \tw}_1) \cdot \td_{j_\ell} \cdots \td_1 \cdots \td_{j_\ell} \cdot \td({\bf \tw}_2),$$ 
can be obtained by simply adding a $\tn$ decoration on the edge $(1,1')$ of 
$$\td({\bf \tw}_1) \cdot \td_{j_\ell}  \cdot \td({\bf \tw}_2)=\tilde{\theta}(b_{\tw_{sr}}).$$
By Theorem~\ref{thm-ALT}, $\mathtt{dec}_A(\tw_{sr})=\tilde{\theta}(b_{{\tw}_{sr}})$, and (DP2) adds a $\tn$ decoration on its edge $(1,1')$. Hence  $\mathtt{dec}_P(\tw)=\tilde{\theta}(b_{\tw})$.
\smallskip

For (RP) the proof holds using a symmetric argument, while for $(\LRPZ)$ a similar argument can be applied considering first the left peak and then the right peak. 
\end{proof}

\begin{Corollary}\label{cor:immagineP}
Let $\tw \in  (\LP), (\RP)$ or $(\LRPZ)$ and $d=\mathtt{dec}_P(\tw)$. Then
\begin{itemize}
    \item[(a)] if $\tw\in (\LP)$, then $d$ starts with $j_{\ell}-1$ vertical edges and has a $\tn$ on the edge $(1,1')$; 
    \item[(b)] if $\tw\in (\RP)$, then $d$ ends with $j_{r}-1$ vertical edges and has a $\tb$ on the edge $(n+2,(n+2)')$.
    \item[(c)] if $\tw\in (\LRPZ)$, then $d$ starts and ends with $j_{\ell}-1$ and $j_{r}-1$ vertical edges, respectively; it has a $\tn$ on $(1,1')$, a $\tb$ on $(n+2,(n+2)')$, and ${\bf a}(d)>1$. 
\end{itemize}
\end{Corollary}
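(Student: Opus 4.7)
My plan is to unwind Definition~\ref{algdecpeak} and combine it with the explicit description of $H_{sr}$ given immediately after the semireduced heap definition, together with Theorem~\ref{thm-ALT} and Corollary~\ref{cor:immagineA} applied to $\tw_{sr}$.

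For part (a), since $\tw \in (\LP)$, the semireduced heap $H_{sr} = H_{\{\ts_{j_\ell}\rightarrow\}}$ uses only the generators $\ts_{j_\ell}, \ldots, \ts_{n+1}$. Consequently $d_{\tau(\tw_{sr})}$ is a product of simple diagrams $d_i$ with $i \geq j_\ell$, each of which acts as the identity on nodes $1, \ldots, j_\ell - 1$, so $d_{\tau(\tw_{sr})}$ starts with the $j_\ell - 1$ vertical edges $(i, i')$ for $i = 1, \ldots, j_\ell - 1$. The same remains true for $\mathtt{dec}_A(\tw_{sr})$, since the alternating decoration algorithm (D1)--(D5) only attaches decorations without altering the underlying edge set. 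Step (DP1) preserves these edges and (DP2) places a $\tn$ on the edge $(1, 1')$, which is the leftmost of these vertical edges, yielding exactly the structure claimed for $d$. Part (b) follows by the symmetric argument applied to $H_{sr} = H_{\{\leftarrow \ts_{j_r}\}}$, with (DP3) providing the $\tb$ on $(n+2, (n+2)')$.

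For part (c), applying the arguments of (a) and (b) simultaneously handles the initial and final vertical edges of $d$, together with the two decorations on $(1, 1')$ and $(n+2, (n+2)')$ coming from (DP2) and (DP3). The remaining and most delicate assertion is ${\bf a}(d) > 1$, which is where I expect the main obstacle. My plan is to invoke the bijection $E$ of Proposition~\ref{prop:E} applied to the alternating heap $H_{sr}$: the semireduction leaves at least one surviving $\ts_{j_\ell}$-element and one surviving $\ts_{j_r}$-element (the remnants of the two peak tips), with $j_\ell < j_r$. Starting from the topmost such elements, I would trace the two associated snake paths and use Definition~\ref{def-hk} together with the LP and RP conditions (which ensure that nothing in $H_{sr}$ sits in columns $j_\ell - 1$ or $j_r + 1$) to check that the defining node $h$ of each such snake path lies on the north face. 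Since the two snake paths are disjoint and distinct, Proposition~\ref{prop:E} produces two distinct non-propagating north-face edges in $d_{\tau(\tw_{sr})}$, and hence in $d$, giving ${\bf a}(d) \geq 2$. The main technical difficulty will be handling the corner case $j_r = j_\ell + 1$ and ruling out the possibility that the two snake paths coalesce into a single one; this is precisely where the hypothesis $\tw \notin (\PZZ)$ should be exploited, since a pseudo zigzag is exactly the configuration in which the entire heap reduces to a single snake path from one end to the other.
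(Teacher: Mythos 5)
Your handling of (a), (b) and of the vertical edges and $\tn$, $\tb$ decorations in (c) is correct and matches the paper, which treats all of that as immediate from Definition~\ref{algdecpeak}. The gap is in your argument for ${\bf a}(d)>1$, which is exactly the point the paper isolates as "the only non trivial fact". Your mechanism --- one north-face non-propagating edge from the snake path through the surviving $\ts_{j_\ell}$-element and a second, distinct one from the surviving $\ts_{j_r}$-element --- fails because those two remnants can lie on the \emph{same} snake path, and not only in the corner case $j_r=j_\ell+1$ that you propose to exclude via the non-$(\PZZ)$ hypothesis. Concretely, for $n=4$ take $\tw$ with reduced word $\ts_3\ts_2\ts_4\ts_1\ts_5\ts_2\ts_4\ts_3$: one checks LP(1)--(2), RP(1)--(2) and LRP(2) with $j_\ell=2$, $j_r=4$, and $\tw\notin(\PZZ)$ since the top $\ts_3$ covers both a $\ts_2$ and a $\ts_4$. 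Here $H_{sr}$ is the diamond $\ts_3\cdot\ts_2\ts_4\cdot\ts_3$, the unique north-going up-down path is $\ts_2\to\ts_3\to\ts_4$, it contains \emph{both} peak remnants, and it contributes the single north edge $(2,5)$ of $d_{\tau(\tw)}=d_3d_2d_4d_3$; the second north edge $(3,4)$ comes from the degenerate snake path at the maximal $\ts_3$, which your argument never examines. So "the two snake paths are disjoint and distinct, hence two distinct north edges" is false as stated, and non-$(\PZZ)$ does not repair it in the way you anticipate. A secondary issue: a remnant that is neither maximal nor minimal in $H_{sr}$ lies on two non-degenerate snake paths, one of whose defining node $h$ is on the \emph{south} face, so "the associated snake path" is not even well specified.

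The paper's route is different and avoids this: for $\tw\in(\LRPZ)$ the fully reduced heap $\cru(H(\tw))$ coincides with $H_{sr}$, its Hasse diagram is connected and is never a monotone up or down path, and a nonempty connected alternating heap of type $A_{n+1}$ produces a diagram with ${\bf a}=1$ precisely when it is a single monotone chain $s_is_{i\pm1}\cdots s_j$. If you want to keep a snake-path proof, the statement to establish is that $\Snakes(H_{sr})$ contains at least two elements whose images under the bijection $E$ of Proposition~\ref{prop:E} are north-face non-propagating edges, counting the degenerate paths $\gamma_x$ at maximal elements, rather than trying to attach one such edge to each peak remnant.
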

\begin{proof}
The only non trivial fact is that for $\tw\in (\LRPZ)$, ${\bf a}(d)>1$. In order to see this, we observe that the Hasse diagram of the reduction of the associated heap is connected and it is never an up or down path. 
\end{proof}

\subsection{Zigzag and pseudo-zigzag}

In this section, we analyze the remaining families of fully commutative heaps, namely the Zigzag $(\ZZ)$, and the Pseudo-zigzag $(\PZZ)$ of Definition~\ref{def:famillesCtilde}.
\smallskip

Observe that if $H=H(\tw)$ with $\tw \in (\ZZ)$ or $(\PZZ)$ and $d(\tw)$ is the corresponding diagram in $\GD(A_{n+1})$, then ${\bf a}(d(\tw))=1$ since $\tau({\tw})$ is of type $s_is_{i+1}\cdots s_j$ or $s_is_{i-1}\cdots s_j$, as can be seen by applying Fork elimination to all the forks appearing in $H$, (see Figure~\ref{Example-N4}).

\begin{Definition}[Decoration algorithm for $(\ZZ)$ and $(\PZZ)$]\label{algdeczz}
Let $\tw \in (\ZZ)$ or $(\PZZ)$.
\begin{enumerate}
	\item[(ZZ1)] Consider $d_{\tau(\tw)}$;
	\item[(ZZ2)] Add a $\tn$ (resp. $\tb$) decoration on the first (resp. last) propagating edge, for each occurrence of $\ts_1$ (resp. $\ts_{n+1}$) in $H(\tw)$;
	\item[(ZZ3)] If $\ts_i = \ts_1$ (resp. $=\ts_{n+1}$), then replace the highest $\tn$ (resp. $\tb$ ) in the first (resp. last) propagating edge  by a $\bullet$ (resp. $\circ$), and add a $\bullet$ (resp. $\circ$) to the edge $(1,2)$ (resp. $(n+1,n+2)$). 
	 \item[(ZZ4)] If $\ts_j = \ts_1$ (resp. $=\ts_{n+1}$), then replace the lower  $\tn$ (resp. $\tb$)  of the first (resp. last) propagating edge by a $\bullet$ (resp. $\circ$), and add a $\bullet$ (resp. $\circ$) to the edge $(1',2')$ (resp. $((n+1)',(n+2)')$). 
	 \item[(ZZ5)] Assign to any black and white decoration vertical positions compatible with the relative  order of the corresponding nodes labeled $\ts_1$ and $\ts_{n+1}$ in $H(\tw)$ (see Figure \ref{fig:ziges2});  
	\item[(ZZ6)] Denote by $\mathtt{dec}_Z(\tw)$ the obtained decorated diagram. 
	\end{enumerate}
\end{Definition}
 
\begin{figure}[h]
	\centering
	\includegraphics[width=0.8\linewidth]{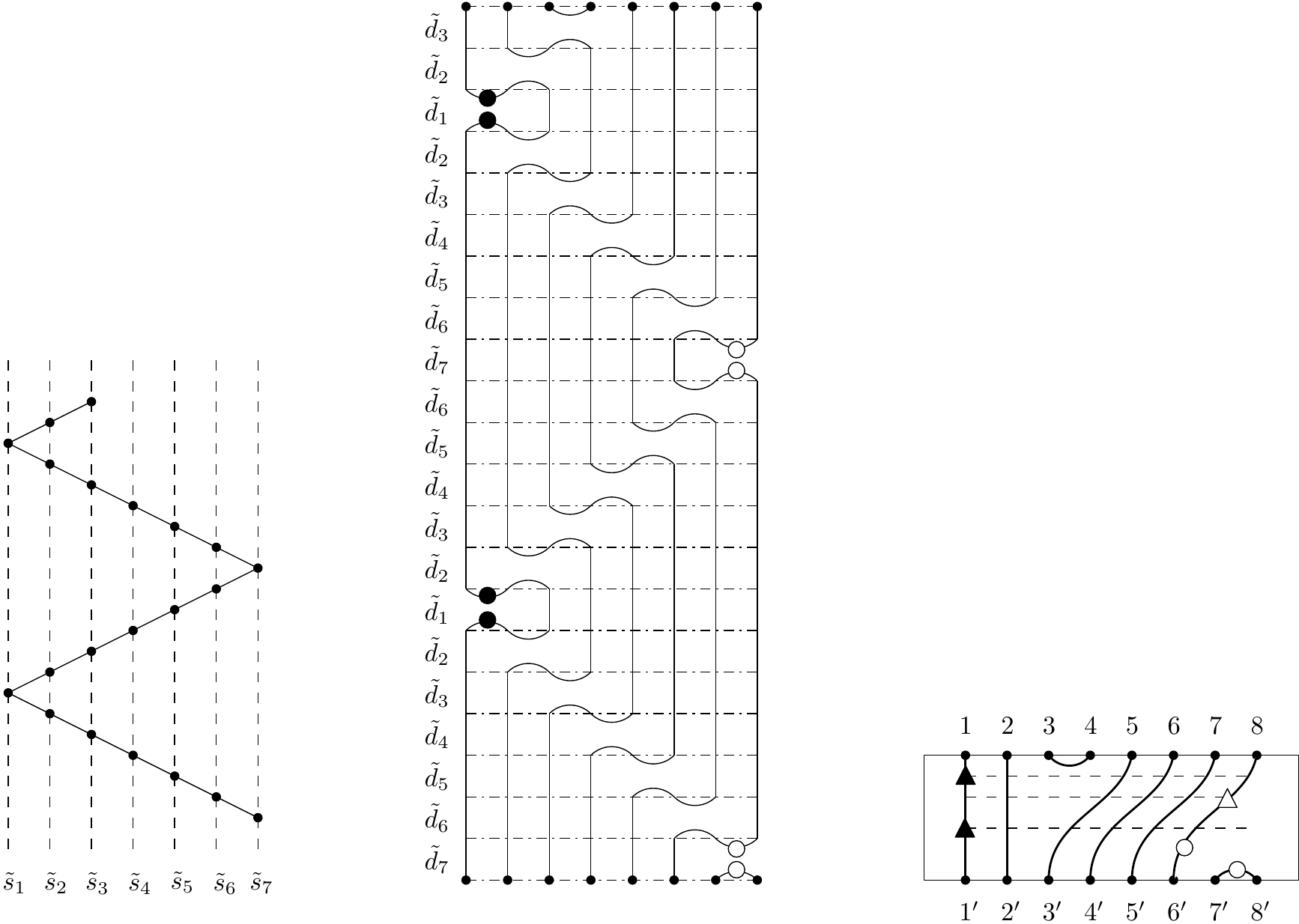}
	\caption{}
	\label{fig:ziges2}
\end{figure}

\begin{Theorem}
Let $\tw \in (\ZZ)$ or $(\PZZ)$. Then $\mathtt{dec}_Z(\tw)=\tilde{\theta}(b_{\tw})$.
\end{Theorem}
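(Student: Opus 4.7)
The plan is to compute $\tilde{\theta}(b_{\tw})=\td_{i_1}\cdots \td_{i_k}$ directly, for the reduced expression of $\tw$ obtained by reading linearly from the infinite zigzag word, and to verify that decoration by decoration it matches $\mathtt{dec}_Z(\tw)$. Since $H(\tw)$ is a chain and cannot contain a complete horizontal subheap, Theorem~\ref{heap-reduction} and Proposition~\ref{prop:E} give $\alpha(\tw)=0$ and $d(\tw)=d_{\tau(\tw)}$; no loops appear, and $\tau(\tw)$ has the staircase form $s_i s_{i\pm 1}\cdots s_j$. Because removing all decorations from $\tilde{\theta}(b_{\tw})$ returns $d(\tw)$ (as already observed in the proof of Theorem~\ref{thm-ALT}), the diagrams $\mathtt{dec}_Z(\tw)$ and $\tilde{\theta}(b_{\tw})$ share the same undecorated skeleton, so the problem reduces to matching decorations column by column.

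The next step is to separate the analysis by extreme column. Since $\td_i = d_i$ carries no decoration for $2\leq i\leq n$, every $\bullet$ in $\tilde{\theta}(b_{\tw})$ originates from a $\td_1$-factor and every $\circ$ from a $\td_{n+1}$-factor. Consider first an interior occurrence of $\ts_1$, one with a $\ts_2$ immediately above and immediately below it in the chain $H(\tw)$. In the product $\td_{i_1}\cdots\td_{i_k}$ this produces the contiguous subword $\td_2\td_1\td_2$, and a direct computation in $\widehat{\mathcal{P}}^{LR}_{n+2}(\Omega)$ shows that $\td_2\td_1\td_2$ equals $\td_2$ with a single $\tn$ placed on its vertical edge $(1,1')$, the two $\bullet$'s coming from $\td_1$ merging via the relation of Figure~\ref{C-Relations}. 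The surrounding factors $\td_j$ with $j\geq 3$ act trivially on columns $\{1,2\}$ and do not disturb this $\tn$; by induction on the number of interior $\ts_1$-occurrences, each one contributes exactly one $\tn$ to the first propagating edge of $\tilde{\theta}(b_{\tw})$, which is the content of step (ZZ2).

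Next I would treat the boundary occurrences of $\ts_1$, namely $\ts_{i_1}=\ts_1$ or $\ts_{i_k}=\ts_1$. If $\ts_{i_1}=\ts_1$, then $\td_1$ is the topmost factor: the $\bullet$ on its north non-propagating edge $(1,2)$ survives in the product (there is no factor above it to absorb it), while the $\bullet$ on the edge $(1',2')$ merges with the factors below and is deposited on the first propagating edge as an isolated $\bullet$ having no partner with which to form a $\tn$; this is precisely step (ZZ3). A symmetric argument at the bottom gives step (ZZ4). A completely analogous analysis using $\td_{n+1}$ handles each $\ts_{n+1}$-occurrence and produces the required $\tb$ and $\circ$ decorations on the last propagating edge.

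Finally, the vertical-position compatibility of step (ZZ5) is inherited from the multiplicative order of the factors in $\td_{i_1}\cdots \td_{i_k}$: decorations on the first propagating edge appear from top to bottom in the same order as the corresponding $\td_1$'s in the product, which coincides with the top-to-bottom order of the $\ts_1$-nodes in $H(\tw)$, and the same holds for the last propagating edge. I expect the main obstacle to be a rigorous verification that, between two consecutive $\td_1$-factors separated by a block of $\td_2,\ldots,\td_n$ tracing a round trip of the zigzag, no extraneous decoration appears on the first propagating edge and no intermediate $\td_j$ breaks the adjacency of the two $\bullet$'s that must merge into a $\tn$. This is a careful but routine diagrammatic bookkeeping that relies on the zigzag constraint that the interior of any such block does not return to column $1$ (and dually column $n+1$).
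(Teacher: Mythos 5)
Your proposal is correct in substance and follows the same basic strategy as the paper: both arguments compute the product $\td_{i_1}\cdots\td_{i_k}$ directly from the unique reduced expression of the zigzag and match it decoration by decoration against $\mathtt{dec}_Z(\tw)$. The difference is in the grouping. You localize to the minimal window $\td_2\td_1\td_2$ around each occurrence of $\ts_1$ (and dually $\td_n\td_{n+1}\td_n$), reduce it to $\td_2$ carrying a single $\tn$ on $(1,1')$, and then argue that the intervening factors, all of index at least $3$, do not interfere. The paper instead groups the word into full excursion factors $\td_i\cdots\td_{n+1}\cdots\td_i$ and $\td_{i-1}\cdots\td_1\cdots\td_{i-1}$, each of which collapses outright to a single simple diagram bearing one $\tb$ or one $\tn$, and then multiplies these collapsed factors in order; this makes the non-interference issue you flag as your ``main obstacle'' disappear, since after collapsing there are no leftover surrounding factors to worry about. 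Your version is workable (the factors strictly between consecutive windows have index $\geq 3$, hence vertical undecorated edges in columns $1$ and $2$, so the first propagating edge passes through them untouched), but the paper's grouping buys a cleaner bookkeeping.

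One point you omit that the paper makes explicitly and that is genuinely needed: after all the $\tn$'s accumulate on the first propagating edge (and the $\tb$'s on the last), you must check that consecutive $\tn$'s cannot be conjoined into a larger block. Since ${\bf a}(d)=1$, block formation is governed by the vertical-position rules (b) and (c) of Section~\ref{sec:diagrammi}, and if two $\tn$'s merged into one block a further relation from Figure~\ref{C-Relations} could fire and alter the diagram (compare relation (c4), which carries a coefficient $2$). The reason they do not merge is precisely the interleaving you establish in your last paragraph: between any two consecutive $\tn$'s on the first propagating edge there is a $\tb$ on the last propagating edge at an intermediate vertical position, so rule (b) forbids conjoining and each $\tn$ remains a singleton block. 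Adding this one sentence closes the argument; without it the identification of $\tilde{\theta}(b_{\tw})$ as the diagram produced by (ZZ1)--(ZZ6) is not complete.
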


\begin{proof}
As already observed, $\tau({\tw})$ is of type $s_is_{i+1}\cdots s_j$ or $s_is_{i-1}\cdots s_j$, hence ${\bf a}(d_{\tau(\tw)})=1$ (see Figure~\ref{fig:ziges2}, right without decorations). By applying the decoration algorithm above, we obtain exactly a decorated diagram of the form of Figure~\ref{fig:ziges2}, right, where the cardinality of the $\tn$ (resp. $\tb$) decorations equals the number of occurrences of $\ts_1$ (resp. $\ts_{n+1}$). The vertical positions of such decorations alternate and the highest one is a $\tn$ (resp. $\tb$) decoration if $\ts_1$ (resp. $\ts_{n+1}$) occurs first in the unique reduced expression of $\tw$. The alternating condition implies that the $\tn$ (resp. $\tb$) decorations in the first (resp. last) propagating edge can not be joined to form a larger block, so each one forms a single block. 
\smallskip

Since the proofs are analogous in the other cases, for the sake of simplicity, we suppose that the first generator of $\tw$ is $\ts_i$, the last is $\ts_j$ with $2<i < j < n+1$  and that the first occurrence of $\ts_{n+1}$ is before $\ts_1$ (see for example Figure~\ref{fig:zig1}, left). The product $\tilde{\theta}(b_{\tw})$ associated to the zigzag can be written as 
\begin{equation}\label{red-expression-ZZ}
(\td_i \cdots \td_{n+1} \cdots \td_i) \cdot (\td_{i-1} \cdots \td_1 \cdots \td_{i-1} ) \cdot {\bf \td} \cdot {\bf \td}_f,
\end{equation}
where ${\bf \td}$ is a finite alternating product of the first and the second term of \eqref{red-expression-ZZ}, and ${\bf \td}_f$ is equal to a product of consecutive simple diagrams ending with $\td_j$, corresponding to the remaining part of the zigzag. 

Now, we notice that the first factor in \eqref{red-expression-ZZ} is the simple diagram $\td_i$ with a unique $\tb$ decoration on the edge $(n+2, (n+2)')$, while the second factor is $\td_{i-1}$ with a $\tn$ decoration on its edge $(1,1')$. Their product is $\td_i \td_{i-1}$ with a $\tb$ decoration on $(n+2, (n+2)')$ and $\tn$ decoration on $(1,1')$ in a lower vertical position with respect to $\tb$. 

Now, by multiplying again by the first factor, we would obtain the simple diagram $\td_i$ with two  $\tb$ decorations on $(n+2, (n+2)')$ and one $\tn$ decoration on $(1,1')$ with a vertical position in between those of the two $\tb$. If we keep multiplying by the remaining factors in ${\bf \td}$ analogously, we obtain a decorated diagram where the first edge has only $\tn$ and the last only $\tb$, moreover the vertical positions of $\tn$ and $\tb$ alternate. Finally, the multiplication with the last factor ${\bf \td}_f$ does not add any decoration to the obtained diagram, but it replaces its non-propagating edge $((i-1)',i')$ with $(j',(j+1)')$, confirming the fact that the undecorated diagram associated to $\tilde{\theta}(b_{\tw})$ is $d_{\tau(\tw)}$. Thus the decorations coincide with those added by the Decoration algorithm~\ref{algdeczz} and $\mathtt{dec}_Z(\tw)=\tilde{\theta}(b_{\tw})$.
\end{proof}

\begin{Corollary}\label{cor:immagineZ}
If $\tw\in (\ZZ)$ or $(\PZZ)$, then $d=\mathtt{dec}_Z(\tw)$ is a decorated diagram with $a(d)=1$, and it has at least a $\tn$ on the first propagating edge and at least a $\tb$ on the last propagating edge. 
\end{Corollary}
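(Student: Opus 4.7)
My plan is to deduce the corollary essentially by reading off the output of the algorithm in Definition~\ref{algdeczz}, using the equality $\mathtt{dec}_Z(\tw)=\tilde{\theta}(b_{\tw})$ just established. The claim $\mathbf{a}(d)=1$ is almost immediate: as noted at the start of this subsection, for $\tw\in(\ZZ)\cup(\PZZ)$ the element $\tau(\tw)$ admits a reduced expression of the form $s_is_{i+1}\cdots s_j$ or $s_is_{i-1}\cdots s_j$, so $d_{\tau(\tw)}$ has exactly one non-propagating edge on its north face; steps (ZZ1)--(ZZ5) only decorate existing edges and never alter the underlying shape.

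For the decoration claims I would first verify that $\ts_1$ and $\ts_{n+1}$ each occur at least once in every reduced expression of $\tw$. In (PZZ) this is built into the definition through the required factors $\ts_2\ts_1\ts_2$ and $\ts_n\ts_{n+1}\ts_n$. In (ZZ) the hypothesis $|H_{\ts_k}|\ge 3$ for some interior $k$ forces at least two turning points in the associated factor of $(\ts_1\ts_2\cdots\ts_{n+1}\ts_n\cdots\ts_2\ts_1)^\infty$, and since consecutive turning points alternate between the two walls, both $\ts_1$ and $\ts_{n+1}$ must appear. Step (ZZ2) therefore places at least one $\tn$ on the first propagating edge and at least one $\tb$ on the last.

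The delicate point is then to show that (ZZ3) and (ZZ4) cannot erase all of these $\tn$'s (resp. $\tb$'s). In (PZZ), the factor $\ts_2\ts_1\ts_2$ forces $\ts_1$ to be strictly interior to $\mathbf{\tw}$, so neither $\ts_i=\ts_1$ nor $\ts_j=\ts_1$ can occur and (ZZ3)/(ZZ4) do not fire on $\ts_1$; the analogous argument handles $\ts_{n+1}$. In (ZZ) I would handle the endpoint case by a short counting argument on the periodic word: if $\ts_1$ sits at an endpoint and occurs only once, then a single bounce at that wall forces $|H_{\ts_k}|\le 2$ for every interior $k$, contradicting (ZZ); hence $\ts_1$ occurs at least twice whenever it is at a single endpoint, and at least three times when it is at both endpoints, so (ZZ3) and (ZZ4) together remove at most two $\tn$'s and leave at least one. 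The symmetric analysis yields at least one surviving $\tb$.

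The main obstacle is exactly this bookkeeping for (ZZ): one has to pin down the precise relationship between the multiplicity of $\ts_1$ (resp. $\ts_{n+1}$) at the endpoints of $\mathbf{\tw}$, the number of bounces in the factor of the infinite word, and the multiplicity $|H_{\ts_k}|$ of an interior generator. The cleanest way to organise this is to view $\mathbf{\tw}$ as a concatenation of maximal monotone up/down segments and observe that the number of such segments equals $|H_{\ts_k}|$ for every interior $k$, so that $|H_{\ts_k}|\ge 3$ is equivalent to at least three segments and hence to at least two bounces alternating between the two walls; once this dictionary is in place the rest of the corollary reduces to the case distinction sketched above.
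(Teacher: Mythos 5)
Your argument is correct, and it is worth noting that the paper offers no proof of this corollary at all: it is stated as an immediate by-product of the Decoration algorithm~\ref{algdeczz} and of the preceding theorem, whose proof is moreover only carried out ``for the sake of simplicity'' in the case $2<i<j<n+1$, where (ZZ3) and (ZZ4) never fire. What you supply, and what the paper glosses over, is exactly the non-obvious content: (i) that $\ts_1$ and $\ts_{n+1}$ each occur in $\mathbf{\tw}$ (for $(\PZZ)$ by definition; for $(\ZZ)$ because three occurrences of an interior $\ts_k$ force two bounces at alternating walls), and (ii) that the replacements in (ZZ3)--(ZZ4) cannot consume every $\tn$ (resp.\ $\tb$), via the count ``$\ts_1$ at $m$ endpoints implies at least $m+1$ occurrences of $\ts_1$''. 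Both points are argued correctly, and the claim $\mathbf{a}(d)=1$ is, as you say, just the observation already made before Definition~\ref{algdeczz}. One caveat on your closing ``dictionary'': the assertion that the number of maximal monotone segments equals $|H_{\ts_k}|$ for every interior $k$ is not literally true, since the two end segments need not reach level $k$; for instance $\ts_n\ts_{n+1}\ts_n\cdots\ts_2\ts_1\ts_2$ has three segments while every interior generator occurs at most twice (it lies in $(\PZZ)$, not $(\ZZ)$), so ``at least three segments'' is not equivalent to the $(\ZZ)$ condition. Only the implication from $(\ZZ)$ to two alternating bounces is needed, and that is precisely what your second paragraph establishes directly, so this imprecision does not affect the validity of the proof.
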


\section{Proof of  the injectivity of $\tilde{\theta}$ }\label{sec:inj}
In this section we prove that the map $\tilde{\theta}$ is injective. 

\begin{Theorem}\label{Teo-alternating}
	Let $\tu \neq \tw$ be two elements in $(\ALT)$. Then  $\mathtt{dec}_A(\tu)$ and $\mathtt{dec}_A(\tw)$ are distinct. 
\end{Theorem}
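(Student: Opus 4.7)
The plan is to show that the decoration algorithm on $(\ALT)$ is invertible: from $\mathtt{dec}_A(\tw)$ one can uniquely recover $H(\tw)$, and hence $\tw$. The argument proceeds in three stages, and the main difficulty will arise in the last.

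First, I would work at the undecorated level. From $\mathtt{dec}_A(\tu) = \mathtt{dec}_A(\tw)$, stripping every decoration from both diagrams yields the same undecorated diagram, and the numbers of decorated loops agree. Combined with Definition~\ref{w-primo}, Theorem~\ref{prop:thetaA}(c) and Theorem~\ref{heap-reduction}, this forces
\[
\tau(\tu) = \tau(\tw) =: v \qquad \text{and} \qquad \alpha(\tu) = \alpha(\tw) =: \alpha.
\]

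Second, I would translate the decorations into combinatorial data on snake paths. By Proposition~\ref{prop:E}, the edges of $d_v$ together with the $\alpha$ loops are in bijection with the elements of $\Snakes(H(\tw))$, and similarly of $\Snakes(H(\tu))$. Step~(D2) of Definition~\ref{algorithmdecoration} then shows that the ordered sequence of $\tn$'s and $\tb$'s read along each edge, going from $h$ to $k$, is exactly the sequence of left and right forks of $H(\tw)$ crossed by the corresponding snake path, while each $\lott$ loop encodes a complete horizontal subheap of $H(\tw)$. Equality of the decorated diagrams therefore forces $H(\tu)$ and $H(\tw)$ to carry the same snake paths labelled with the same fork-crossing data.

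Third, I would reconstruct the heap from this data. Starting from $H(v)$ viewed as an alternating heap in $\tC_n$, I would insert along each snake path the prescribed number of left forks $\ts_1\ts_2\ts_1$ and right forks $\ts_{n+1}\ts_n\ts_{n+1}$ in the prescribed order, and restore a complete horizontal subheap for each $\lott$ loop. Since $H(\tu)$ and $H(\tw)$ are both alternating, the alternation conditions on $H_{\{\ts_i,\ts_{i+1}\}}$ for $i \in \{1,2,n-1,n\}$ rule out all competing placements of the inserted forks, so the extension is forced and $H(\tu) = H(\tw)$.

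The hard part will be the subcase ${\bf a}(d_v) = 1$, where several propagating edges share the same vertical range so that it is the \emph{relative vertical positions} of their decorations, not just the multisets, that distinguish different alternating heaps. To deal with this I would invoke the vertical-ordering Lemmas~\ref{Lemmahk_nodeleft} and~\ref{Lemmahk_noderight}, together with Remark~\ref{ordine-decorazioni}(b), to convert the relative vertical order of $\tn$'s (respectively, $\tb$'s) on different propagating edges into the relative poset order, in $H(\tw)$, of the corresponding left (respectively, right) forks. Once this translation is in place, the reconstruction becomes unique and the theorem follows.
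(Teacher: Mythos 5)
Your first two stages are sound and coincide with the paper's opening moves: reducing to $\tau(\tu)=\tau(\tw)$ and $\alpha(\tu)=\alpha(\tw)$, and translating the decorations into fork-crossing sequences of snake paths via Proposition~\ref{prop:E} and step (D2). The gap is in stage three, where you assert that the heap can be uniquely rebuilt from this data because ``the alternation conditions rule out all competing placements.'' That uniqueness claim is essentially the whole theorem, and it is not forced by alternation in columns $1,2,n,n+1$ alone. Two things are missing. First, the fork-insertion picture says nothing about the parts of $H(\tw)$ carried by snake paths that do \emph{not} come from forks (the paths $\gamma_{\vec{x,y}}$ and the degenerate ones): two alternating heaps could carry identical fork data and still differ there. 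The paper closes exactly this hole by observing that the reduction $\cru$ leaves those subheaps untouched, so a difference there would already force $\cru(H(\tu))\neq\cru(H(\tw))$, contradicting $d_{\tau(\tu)}=d_{\tau(\tw)}$ --- an argument you would need but do not supply. Second, even for the fork-carrying paths, what must be pinned down is the global assembly: which fork is crossed by which path, and how the paths sit relative to one another in the poset. The paper's own inverse construction (Section~\ref{surjectivity}, Definitions~\ref{def-He} and~\ref{def:H'H}) requires the entire splitting-node analysis precisely to make such an assembly well defined, so ``the extension is forced'' cannot be taken for granted.

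By contrast, the paper avoids reconstruction altogether: it orders the left (resp.\ right) forks of each heap from top to bottom, takes the smallest index $i$ at which the associated snake paths differ, and uses Lemma~\ref{Lemmahk_nodeleft} together with Remark~\ref{ordine-decorazioni}(b) to exhibit a single edge that receives a $\tn$ in $\mathtt{dec}_A(\tu)$ but cannot receive one in $\mathtt{dec}_A(\tw)$ (or that is decorated differently in the two diagrams). You correctly identified Lemma~\ref{Lemmahk_nodeleft} and Remark~\ref{ordine-decorazioni}(b) as the relevant tools, but in the paper they serve to produce this one distinguishing edge, not to control relative vertical positions of blocks. If you want to keep the reconstruction strategy, you would in effect have to reprove Theorem~\ref{teosuralt} together with a uniqueness statement for it, which is considerably more work than the direct argument.
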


\begin{proof}
If $d_{\tau(\tu)} \neq d_{\tau(\tw)}$ then $\mathtt{dec}_A(\tu) \neq \mathtt{dec}_A(\tw)$.
If $d_{\tau(\tu)}=d_{\tau(\tw)}$ and $\alpha(\tu)\not=\alpha(\tw)$, we are also done, since $\mathtt{dec}_A(\tu)$ and $\mathtt{dec}_A(\tw)$ have a different number of decorated loops.
\smallskip

	So suppose $d_{\tau(\tu)}=d_{\tau(\tw)}$ and $\alpha(\tu)=\alpha(\tw)$. If the two heaps $H(\tu)$ and $H(\tw)$ have different numbers either of left forks or right forks, then  $\mathtt{dec}_A(\tu)$  and $\mathtt{dec}_A(\tw)$ have a different numbers either of $\tn$ or  $\tb$ decorations and so they are distinct.
\smallskip
	
Therefore suppose that they have the same numbers of left forks say $f^{\tu}_1,\ldots, f^{\tu}_p$, resp. $f^{\tw}_1,\ldots, f^{\tw}_p$ and right forks $f^{\tu}_{p+1},\ldots, f^{\tu}_q$ resp. $f^{\tw}_{p+1},\ldots, f^{\tw}_q$ labeled from top to bottom. 
Since $\alpha(\tu)=\alpha(\tw)$ and decorated loops are in one-to-one correspondence with cycles $\gamma_\infty$ inside the heap, we might assume that all the forks define snake paths which are not cycles. 

Since $H(\tu)$ and $H(\tw)$ are distinct, there exists $1\leq i \leq q$ such that  $\gamma(H(u),f_i^{\tu}) \neq \gamma(H(w),f_i^{\tw})$.
In fact, suppose by contradiction that they coincide for all $i$. Being $H(\tu)$ and $H(\tw)$ distinct, they can only differ in the subheaps which does not involve the nodes  contained in the support of  $\gamma(H(u),f_i^{\tu}) = \gamma(H(w),f_i^{\tw})$, for all $i=1,\dots, q$, namely those giving rise to snakes paths of the form $\gamma_{\vec{x,y}}$ (defined after Example~\ref{rem:singlepath0}). Since the  reduction does not involve such subheaps and does not disconnect them from the supports of the reduced heaps, after applying the  Reduction algorithm~\ref{reductionalgorithm} to both heaps we necessarily have  $\mathfrak{R}(H(\tu)) \neq \mathfrak{R}(H(\tw))$, which is in contradiction with $d_{\tau(\tu)}=d_{\tau(\tw)}$.
\smallskip

So, fix the smallest $1\leq i \leq q$ such that  $\gamma_{\tilde{u}}=\gamma(H(\tu),f_i^{\tu}) \neq \gamma(H(\tw),f_i^{\tw})=\gamma_{\tilde{w}}$, and set $(h^{\tu},k^{\tu})_{\gamma_{\tilde{u}}}$ and $(h^{\tw},k^{\tw})_{\gamma_{\tilde{w}}}$ the corresponding edges in $d_{\tau(\tu)}=d_{\tau(\tw)}$ (see Definition~\ref{def-hk}). 
Therefore, either $(h^{\tu},k^{\tu})_{\gamma_{\tilde{u}}}$ and $(h^{\tw},k^{\tw})_{\gamma_{\tilde{w}}}$ are different edges of $d_{\tau(u)}$, or they coincide but they are decorated in a different way in $\mathtt{dec}_A(\tu)$ and $\mathtt{dec}_A(\tw)$. 
In the latter case we are done. 

So suppose $(h^{\tu},k^{\tu})_{\gamma_{\tilde{u}}} \neq (h^{\tw},k^{\tw})_{\gamma_{\tilde{w}}}$. Then 
$h^{\tu} \neq h^{\tw}$, let us say $h^{\tu} \succ_\ell h^{\tw}$ and assume that $f_i^{\tu}$ and $f_i^{\tw}$ are left forks. In $\mathtt{dec}_A(\tu)$ there is at least a $\tn$ decoration in the edge $(h^{\tu},k^{\tu})_{\gamma_{\tilde{u}}}$, while in $\mathtt{dec}_A(\tw)$, by the minimality of $i$, by $h^{\tu} \succ_\ell h^{\tw}$ and Remark~\ref{ordine-decorazioni} (b), there can not be black decorations $\tn$ in the edge $(h^{\tu},k^{\tu})_{\gamma_{\tilde{u}}}$, hence once again $\mathtt{dec}_A(\tu) \neq \mathtt{dec}_A(\tw)$. The same argument works for right forks.
\end{proof}

\begin{Theorem}\label{Teo-peaks}
Let $\tu \neq\tw$ be two elements in $(\LP), (\RP)$, or $(\LRPZ)$. Then  $\mathtt{dec}_P(\tu)$ and $\mathtt{dec}_P(\tw)$ are distinct. 
\end{Theorem}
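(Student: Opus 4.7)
The plan is to reduce the statement to the alternating case of Theorem~\ref{Teo-alternating} by peeling off the decorations produced by the peak-specific steps (DP2)--(DP3) of Decoration algorithm~\ref{algdecpeak}. The argument proceeds in three stages: distinguishing the family of $\tw$ from $\mathtt{dec}_P(\tw)$, then the indices $j_\ell, j_r$, and finally the semireduced alternating heap $\tw_{sr}$.

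First I would use Corollary~\ref{cor:immagineP}, together with Corollary~\ref{cor:immagineA} (which guarantees that any alternating diagram carries no $\tn$ on the vertical edge $(1,1')$ and no $\tb$ on $(n+2,(n+2)')$, so such decorations in the peak case can only arise from (DP2)--(DP3)), to read off the family of $\tw$ directly from $\mathtt{dec}_P(\tw)$: the presence of $\tn$ on $(1,1')$ signals membership in $(\LP)\cup(\LRPZ)$; the presence of $\tb$ on $(n+2,(n+2)')$ signals $(\RP)\cup(\LRPZ)$; and the conjunction, together with ${\bf a}(d)>1$, characterises $(\LRPZ)$. Whenever $\tu$ and $\tw$ belong to different families, their diagrams are therefore distinct.

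Next, assuming that $\tu$ and $\tw$ lie in the same family, I would extract the indices $j_\ell$ and $j_r$ from the diagram. Since $\tw_{sr}$ uses only the generators in $\{\ts_{j_\ell},\ldots,\ts_{n+1}\}$, $\{\ts_1,\ldots,\ts_{j_r}\}$ or $\{\ts_{j_\ell},\ldots,\ts_{j_r}\}$ according to the family, the underlying undecorated diagram $d_{\tau(\tw)}=d_{\tau(\tw_{sr})}$ has exactly $j_\ell-1$ initial and/or $j_r-1$ final vertical edges, with the first non-vertical edge on the left (resp.\ right) appearing at column $j_\ell$ (resp.\ $j_r$) because $\ts_{j_\ell}$ (resp.\ $\ts_{j_r}$) genuinely appears in $\tw_{sr}$. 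Hence $(j_\ell,j_r)$ is determined by the diagram, and we may further assume that these indices coincide for $\tu$ and $\tw$.

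Finally, under these assumptions, the classification conditions LP(2), RP(2) and LRP(1) in Definition~\ref{def:famillesCtilde} identify unambiguously the $\ts_{j_\ell}$- and $\ts_{j_r}$-nodes of $H_{sr}$ to which the peaks $P_\leftarrow(\ts_{j_\ell})$ and $P_\rightarrow(\ts_{j_r})$ reattach, so the map $\tw\mapsto\tw_{sr}$ is injective on the fibre determined by a fixed (family, $j_\ell, j_r$); thus $\tu\neq\tw$ forces $\tu_{sr}\neq\tw_{sr}$, with $\tu_{sr},\tw_{sr}\in(\ALT)$. Since $\mathtt{dec}_P(\tu)$ and $\mathtt{dec}_P(\tw)$ are obtained from $\mathtt{dec}_A(\tu_{sr})$ and $\mathtt{dec}_A(\tw_{sr})$ by adding exactly the same decorations via (DP2)--(DP3), Theorem~\ref{Teo-alternating} completes the argument. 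The main obstacle is precisely this last injectivity: once $(j_\ell,j_r)$ and the family are fixed, one needs to check carefully that the peak attachment is forced by the classification, which amounts to tracking where the ``extra'' $\ts_{j_\ell}$- (resp.\ $\ts_{j_r}$-) node sits inside $H_{sr}$ relative to its sibling.
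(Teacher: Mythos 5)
Your proposal is correct and follows essentially the same route as the paper: the paper's proof also first uses the counts of initial/final vertical edges of $d_{\tau(\cdot)}$ to pin down $j_\ell$ and $j_r$, then passes to the semireduced alternating heaps and invokes Theorem~\ref{Teo-alternating} together with the fact that (DP2)--(DP3) add identical decorations. You are somewhat more explicit than the paper on two points it leaves implicit — distinguishing the subfamilies $(\LP),(\RP),(\LRPZ)$ from the decorations on $(1,1')$ and $(n+2,(n+2)')$ before comparing semireduced heaps, and verifying that $\tw\mapsto\tw_{sr}$ is injective once the family and $j_\ell,j_r$ are fixed — but these are refinements of the same argument, not a different one.
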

\begin{proof}
If $d_{\tau(\tu)}$ and $d_{\tau(\tw)}$ have a different number of vertical edges either on the left side or on the right side we are done. So suppose that $d_{\tau(\tu)}$ and $d_{\tau(\tw)}$  have the same number of vertical edges in both sides.
Since $H(\tu)$ and $H(\tw)$ are distinct, the alternating  semireduced  heap $H_{sr}(\tu)$ and $H_{sr}(\tw)$ are distinct. Therefore, by Theorem~\ref{Teo-alternating} and the Decoration algorithm~\ref{algdecpeak},  $\mathtt{dec}_P(\tu) \neq \mathtt{dec}_P(\tw)$.
\end{proof}

\begin{Theorem}\label{Teo-ZZ}
Let $\tu \neq \tw$ be two elements in $(\ZZ)$ or $(\PZZ)$. Then  $\mathtt{dec}_Z(\tu)$ and $\mathtt{dec}_Z(\tw)$ are distinct. \end{Theorem}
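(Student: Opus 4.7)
The plan is to prove the contrapositive: from the decorated diagram $\mathtt{dec}_Z(\tw)$ one can uniquely reconstruct $\tw$. This suffices because both $(\ZZ)$ and $(\PZZ)$ consist of finite factors of the infinite word $(\ts_1\ts_2\cdots \ts_{n+1}\ts_n\cdots \ts_2\ts_1)^\infty$, so recovering $\tw$ amounts to identifying its first generator, its initial direction, and the turning positions at $\ts_1$ or $\ts_{n+1}$.

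First I would strip the decorations from $\mathtt{dec}_Z(\tw)$, obtaining $d_{\tau(\tw)}$ by step (ZZ1). As illustrated in Figure~\ref{Example-N4}, the reduction of a zigzag (or pseudo-zigzag) heap collapses it to a straight segment, so $\tau(\tw) = s_is_{i\pm 1}\cdots s_j \in \FC(A_{n+1})$. The unique north cup and south cap of $d_{\tau(\tw)}$ immediately identify the indices $i$ and $j$, that is, the first and last generators of $\tw$.

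Next, I would harvest the remaining data from the decorations. By (ZZ2)--(ZZ4), counting the decorations in $\{\tn,\bullet\}$ on the first propagating edge yields $|H(\tw)_{\ts_1}|$, and similarly for $\{\tb,\circ\}$ on the last propagating edge and $|H(\tw)_{\ts_{n+1}}|$. The presence of a $\bullet$ on $(1,2)$ (resp.\ $(1',2')$) detects whether the first (resp.\ last) letter of $\tw$ is $\ts_1$, and symmetrically $\circ$ on $(n+1,n+2)$ or $((n+1)',(n+2)')$ detects $\ts_{n+1}$ at either endpoint. Finally, by (ZZ5) the vertical positions of these decorations faithfully reproduce the top-to-bottom order of the $\ts_1$- and $\ts_{n+1}$-nodes in $H(\tw)$, so merging the two decoration sequences along the two propagating edges by vertical position recovers the full interleaving pattern.

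With this data in hand, $\tw$ is completely determined: knowing $\ts_i$ and the initial direction (read off from whether the topmost $\ts_{n+1}$-decoration lies above or below the topmost $\ts_1$-decoration, and forced uniquely when $i\in\{1,n+1\}$), the zigzag unfolds as a factor of the infinite word above, reversing at each $\ts_1$ or $\ts_{n+1}$ in the order dictated by the interleaving pattern and terminating at $\ts_j$. The main obstacle I anticipate is handling the degenerate configurations --- when $|H(\tw)_{\ts_1}|=0$ or $|H(\tw)_{\ts_{n+1}}|=0$ (possible for $(\ZZ)$ but excluded for $(\PZZ)$ by the defining factors $\ts_2\ts_1\ts_2$ and $\ts_n\ts_{n+1}\ts_n$), and when $i$ or $j$ lies in $\{1,n+1\}$, so that some $\tn/\tb$ has been promoted to $\bullet/\circ$ by (ZZ3)--(ZZ4) --- where one must verify via Corollary~\ref{cor:immagineZ} that the decoration rules still permit unique decoding; this reduces to a short case analysis on which of (ZZ3) and (ZZ4) has fired.
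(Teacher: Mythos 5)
Your proposal is correct and follows essentially the same route as the paper: both arguments rest on the observations that the endpoints $(i,j)$ are visible in the undecorated diagram $d_{\tau(\tw)}$, that the counts of black and white decorations on the first and last propagating edges recover $|H(\tw)_{\ts_1}|$ and $|H(\tw)_{\ts_{n+1}}|$, and that the vertical positions (in particular whether the topmost decoration is $\tn$ or $\tb$) recover the initial direction and interleaving. The paper phrases this as a direct three-way case split on which of these invariants must differ for $\tu\neq\tw$, whereas you package it as an explicit decoding of $\tw$ from $\mathtt{dec}_Z(\tw)$; your version is slightly more explicit about why these data determine the element and about the degenerate cases $i,j\in\{1,n+1\}$, but the substance is identical.
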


\begin{proof} Suppose that $\ts_{i_1} \cdots\ts_{j_1}$ and $\ts_{i_2} \cdots\ts_{j_2}$ are reduced expression for $\tu$ and $\tw$.
	If $(\ts_{i_1},\ts_{j_1})\not=(\ts_{i_2},\ts_{j_2})$, then $d_{\tau(\tu)} \neq d_{\tau(\tw)}$ and we are done.	
	If $(\ts_{i_1},\ts_{j_1})=(\ts_{i_2},\ts_{j_2})$, then $d_{\tau(\tu)} = d_{\tau(\tw)}$, since $\tu \neq \tw$  either they have a different number of black or white decorations, or these numbers are the same, but the highest decorations is a $\tn$ in one diagram and a $\tb$ in the other. 
\end{proof}

The proof and the importance of the vertical positions of the decorations can be easily understood by looking at the picture below. The two heaps in Figure~\ref{fig:zig1} are associated to the decorated diagrams in Figure~\ref{decorated-Ex2}, respectively. The vertical positions in Figure~\ref{decorated-Ex2} are needed to distinguish them, and allows the map $\mathtt{dec}_Z$ to be injective.

\begin{figure}[h]
	\centering
	\includegraphics[width=0.5\linewidth]{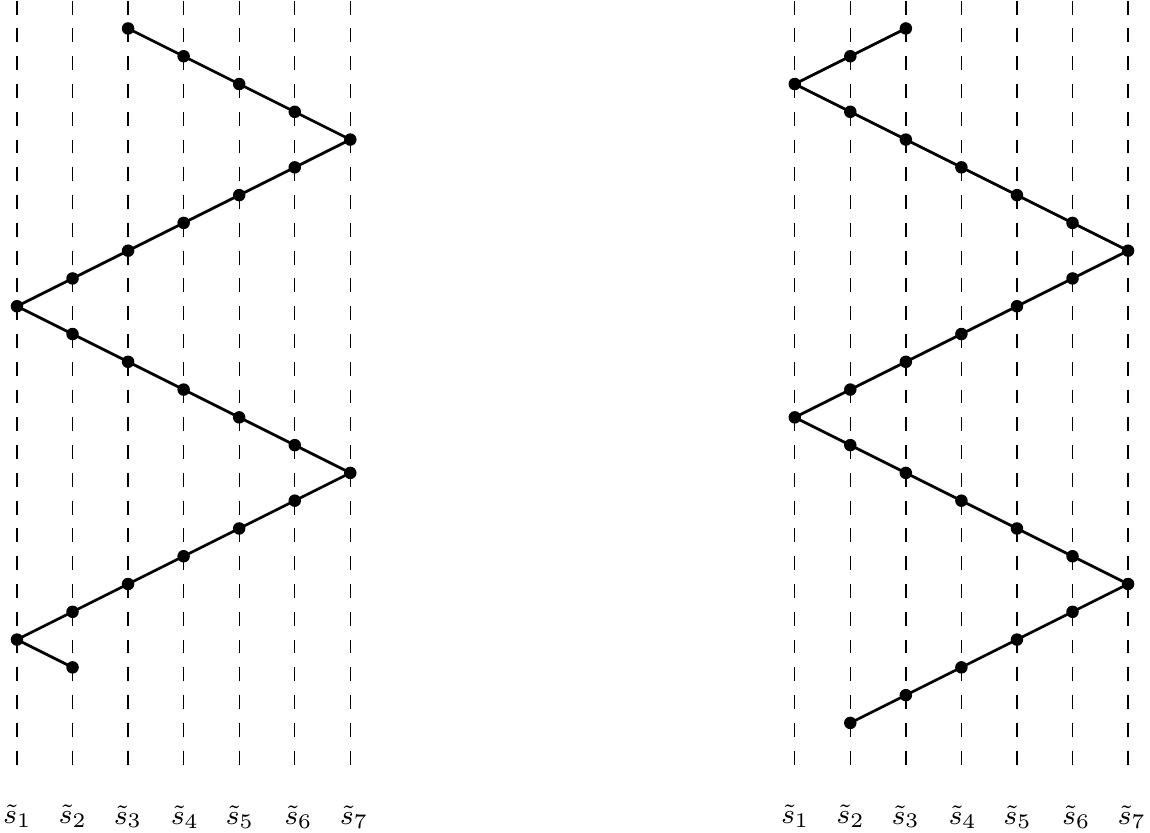}
	\caption{The heaps associated with the decorated diagrams in Figure~\ref{decorated-Ex2}.}
	\label{fig:zig1}
\end{figure}

\begin{Definition}
    Let $\mathtt{dec}: \FC(\tC_{n+1}) \longrightarrow \GD(\tC_{n+1})$ the map defined by 
    $$\mathtt{dec}(\tw):=\left \{ \begin{array}{ll}
         \mathtt{dec}_A(\tw)& \mbox{if} \quad  \tw \in (\ALT) \\
         \mathtt{dec}_P(\tw)& \mbox{if} \quad \tw \in (\LP), (\RP), (\LRPZ) \\ 
         \mathtt{dec}_Z(\tw)& \mbox{if} \quad \tw \in (\ZZ), (\PZZ).
    \end{array}\right.$$
\end{Definition}

\begin{Theorem}
    The map $\mathtt{dec}$ is injective.
\end{Theorem}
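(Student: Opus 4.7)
The plan is direct: the three preceding theorems have already handled the hard cases individually, so only a short cross--family comparison is left. By Theorems~\ref{Teo-alternating}, \ref{Teo-peaks}, and \ref{Teo-ZZ}, the restrictions $\mathtt{dec}_A$, $\mathtt{dec}_P$, $\mathtt{dec}_Z$ are injective on the three groups of families $(\ALT)$, $(\LP)\cup(\RP)\cup(\LRPZ)$, and $(\ZZ)\cup(\PZZ)$, respectively. It therefore suffices to prove that the images of these three groups are pairwise disjoint in $\GD(\tC_n)$, and this can be done by a case analysis using Corollaries~\ref{cor:immagineA}, \ref{cor:immagineP}, and \ref{cor:immagineZ}, which isolate characteristic decorations on each type of image.

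First I would tackle $(\ALT)$ versus the peak group: by Corollary~\ref{cor:immagineP} a peak image carries a $\tn$ on the vertical edge $(1,1')$ (if $\tw\in (\LP)\cup(\LRPZ)$) and/or a $\tb$ on $(n+2,(n+2)')$ (if $\tw\in (\RP)\cup(\LRPZ)$), while by Corollary~\ref{cor:immagineA} an alternating image has the edge $(1,1')$ undecorated when present, and otherwise only a $\bullet$ as first decoration on the edges issuing from $1$ and $1'$; the symmetric statement rules out the right end. Next, for the comparisons involving $(\ZZ)\cup(\PZZ)$, Corollary~\ref{cor:immagineZ} forces $\mathbf{a}(d)=1$ together with a $\tn$ on the first propagating edge and a $\tb$ on the last one. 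This immediately separates zigzag images from $(\LRPZ)$--images (which satisfy $\mathbf{a}(d)>1$ by Corollary~\ref{cor:immagineP}(c)) and from $(\ALT)$--images, since in the latter the first propagating edge, being incident to node $1$, is either the undecorated vertical edge $(1,1')$ or begins with $\bullet$, ruling out a leading $\tn$. The remaining comparison, between $(\LP)$ (resp.\ $(\RP)$) and $(\ZZ)\cup(\PZZ)$, follows from the specific shape of the zigzag image: since $\tau(\tw)=s_i s_{i\pm 1}\cdots s_j$, the $\tn$ added by Definition~\ref{algdeczz} lies on a non--vertical propagating edge, whereas in a $(\LP)$--image the $\tn$ sits on the vertical edge $(1,1')$, the leftmost of $j_\ell-1\geq 1$ initial vertical edges.

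The main obstacle I anticipate is this last comparison: one has to verify carefully, by unwinding Definition~\ref{algdeczz}, that the zigzag algorithm never produces an isolated single $\tn$ on a lone vertical edge at the extreme left of the diagram, so that the position of the $\tn$ genuinely distinguishes a left--peak image from a zigzag image (and symmetrically on the right). Once that bookkeeping is done, the conclusion that $\mathtt{dec}$ is injective is an immediate consequence of combining the three within--group injectivity results with the pairwise disjointness of the images.
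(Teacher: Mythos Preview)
Your overall plan coincides with the paper's: use Theorems~\ref{Teo-alternating}, \ref{Teo-peaks}, \ref{Teo-ZZ} for within-family injectivity, and Corollaries~\ref{cor:immagineA}, \ref{cor:immagineP}, \ref{cor:immagineZ} to see that the three image families are pairwise disjoint. The paper's proof is in fact shorter than yours: it simply invokes the three corollaries without spelling out the case analysis.

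There is, however, a genuine slip in the one case you flagged as delicate. Your claim that for $\tw\in(\ZZ)\cup(\PZZ)$ the $\tn$ produced by Definition~\ref{algdeczz} lies on a \emph{non-vertical} propagating edge is false. If the zigzag begins at $\ts_i$ with $i>1$, then $\tau(\tw)=s_i s_{i\pm 1}\cdots s_j$ and the leftmost propagating edge of $d_{\tau(\tw)}$ is the vertical edge $(1,1')$; step~(ZZ2) places all the $\tn$'s there. In particular a zigzag with a single occurrence of $\ts_1$ and $i>1$ produces exactly one $\tn$ on the vertical edge $(1,1')$, so the verification you propose (``the zigzag algorithm never produces an isolated single $\tn$ on a lone vertical edge at the extreme left'') fails.

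The correct separation between $(\LP)$ and $(\ZZ)\cup(\PZZ)$ comes from the \emph{right} side of the diagram, not the left. By Corollary~\ref{cor:immagineZ} every zigzag image carries at least one $\tb$ on the last propagating edge. For $\tw\in(\LP)$, the diagram $\mathtt{dec}_P(\tw)$ equals $\mathtt{dec}_A(\tw_{sr})$ with a $\tn$ added on $(1,1')$; since $\tw_{sr}\in(\ALT)$, Corollary~\ref{cor:immagineA} applies to its right end: if $(n+2,(n+2)')$ is present it is undecorated, and otherwise the edges issuing from $n+2$ and $(n+2)'$ begin with~$\circ$. Either way this is incompatible with the $\tb$ required on the last propagating edge of a zigzag image. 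The $(\RP)$ versus $(\ZZ)\cup(\PZZ)$ case is symmetric, looking at the left end instead. With this correction, your argument goes through and matches the paper's.
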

\begin{proof}
Let $\tu$, $\tw \in \FC(\tC_{n+1})$. If both $\tu, \tw$ are in $(\ALT), (\LP), (\RP), (\ZZ), (\LRPZ)$ or $(\PZZ)$, then we already proved that $\mathtt{dec}(\tu) \neq \mathtt{dec}(\tw)$. If they belong to different classes then by Corollaries~\ref{cor:immagineA}, \ref{cor:immagineP}, and \ref{cor:immagineZ}, $\mathtt{dec}(\tu)$ and $\mathtt{dec}(\tw)$ belongs to disjoint families of decorated diagrams and so they are distinct.
\end{proof}

From the previous theorem, it follows that the image through $\tilde{\theta}$  of the monomial basis $\{b_{\tw} \mid \tw \in \FC(\tC_{n+1})\}$ is an independent set in $\GD(\tC_{n+1})$, and so $\tilde{\theta}$ is an injective map.

\section{A constructive approach to  the surjectivity of $\tilde{\theta}$}\label{surjectivity}


Even though the  map $\tilde{\theta}\colon \TL(\tC_n)\longrightarrow \GD(\tC_n) $ is surjective by definition, starting from an admissible diagram $d\in\GD(\tC_n)$, it is not easy to recover  the unique element $\tw \in \FC(\tC_{n})$ such that $ \tilde{\theta}(b_{\tw})=d$. In this section, using our map  $\mathtt{dec}$, we solve this problem giving an algorithmic construction of such an element.  In order to do this we first need to introduce some definitions and technical lemmas.

\begin{Definition}
Given an admissibile diagram $d \in \GD(\tC_n)$ we say that:
\begin{itemize}
\item [(a)]  $d$ is a $\mathcal{Z}$-diagram if ${\bf a}(d)=1$, it has at least a $\tn$ on the first propagating edge  and at least a $\tb$ on the last propagating edge. 
\item [(b)] $d$ is a $\mathcal{P}$-diagram  if it is not a $\mathcal{Z}$-diagram and it has either  the edge $(1,1')$ with a $\tn$ or the edge $(n+2,(n+2)')$ with a $\tb$ or both. If $d$ has a $\tn$ (resp. $\tb$) on the edge $(1,1')$ (resp. $(n+2,(n+2)')$ then we set $j_{\ell}-1$ (resp. $j_{r}-1$) the number of its first (resp. last) vertical edges.
\item [(c)]$d$ is an $\mathcal{A}$-diagram  if it is  neither $\mathcal{Z}$-diagram nor a $\mathcal{P}$-diagram.
\end{itemize}
\end{Definition}

\begin{Lemma}\label{lem:casobase}
If $d$ is an $\mathcal{A}$-diagram with no $\tn$ and $\tb$ decorations, then there exists an alternating heap $H\in (\ALT)$ such that $\mathtt{dec}(H)=d$.
\end{Lemma}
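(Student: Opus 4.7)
The plan is to exploit the crucial observation that admissibility condition (A1) restricts the loops of an admissible diagram to the form $\lott$, which carries both a $\tn$ and a $\tb$ decoration; under the hypothesis ``no $\tn$ and no $\tb$'' this forces $d$ to be loop-free. Consequently the only decorations appearing on $d$ are $\bullet$ and $\circ$, located on the edges incident to the corner nodes $1,1',n+2,(n+2)'$ as prescribed by (A2)--(A4).

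First I would erase all $\bullet$ and $\circ$ decorations from $d$ to obtain a loop-free undecorated diagram $d_0\in\GD(A_{n+1})$. By Theorem~\ref{prop:thetaA}(a),(c) there is a unique $w\in\FC(A_{n+1})$ with $d_0=d_w$. A direct inspection of conditions (A2)--(A4), combined with the hypothesis that no $\tn$ (resp.\ $\tb$) occurs, shows that the decorations near the western boundary of $d$ take one of exactly two forms: either $(1,1')$ is an edge of $d$ and is undecorated, or $(1,1')$ is absent from $d$ and the two edges of $d$ incident to the nodes $1$ and $1'$ each carry precisely one $\bullet$. The eastern boundary obeys the analogous dichotomy with $\circ$ replacing $\bullet$. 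Since $(1,1')$ is an edge of $d_w$ if and only if $w$ contains no $s_1$, these two cases correspond respectively to $w$ having no $s_1$ or exactly one $s_1$, and similarly for $s_{n+1}$.

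Next I would construct the alternating lift of $w$. Fix any reduced expression $s_{i_1}\cdots s_{i_k}$ of $w$ and set $\tw:=\ts_{i_1}\cdots\ts_{i_k}\in\FC(\tC_n)$. By Theorem~\ref{heaps-typeA}(a), $H(w)$ contains at most one occurrence of $s_1$ and at most one of $s_{n+1}$, so Remark~\ref{step0}(2) applies: $\tw$ belongs to $(\ALT)$, $\tau(\tw)=w$, $\alpha(\tw)=0$, and the labelled heaps $H(\tw)$ and $H(w)$ are isomorphic. In particular $H(\tw)$ is fork-free.

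To conclude I would run the Decoration algorithm~\ref{algorithmdecoration} on $\tw$: since $H(\tw)$ contains no fork, only steps (D1) and (D4) are executed. Step (D4) places a $\bullet$ on each of the edges issuing from nodes $1$ and $1'$ precisely when $(1,1')$ is not an edge of $d_{\tau(\tw)}=d_0$, and symmetrically places a $\circ$ at the eastern end. Comparing this with the description of the decorations of $d$ obtained in the first step yields $\mathtt{dec}_A(\tw)=d$, and since $\tw\in(\ALT)$ we conclude $\mathtt{dec}(\tw)=d$ by definition. The only nontrivial point is the initial observation that ``no $\tn$, no $\tb$'' forbids every loop via (A1); everything else is a mechanical translation between $\FC(A_{n+1})$ and $(\ALT)\subset\FC(\tC_n)$ afforded by Remark~\ref{step0}(2).
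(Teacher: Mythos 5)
Your proposal is correct and follows essentially the same route as the paper's proof: erase the $\bullet$ and $\circ$ decorations to obtain a loop-free diagram in $\GD(A_{n+1})$, recover the unique $w\in\FC(A_{n+1})$ via Remark~\ref{rem:TLA}, lift it to an alternating $\tw\in\FC(\tC_n)$ using Remark~\ref{step0}(2), and check that step (D4) of the Decoration algorithm restores exactly the decorations of $d$. Your explicit justification via (A1) that the hypothesis forces $d$ to be loop-free is a detail the paper leaves implicit, but the argument is the same.
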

\begin{proof}
The undecorated diagram $d'$ associated with $d$, obtained by erasing only $\bullet$ and $\circ$ decorations on the edges starting from nodes $1,1'$, $n+2$ or $(n+2)'$ is a loop-free diagram in $\GD(A_{n+1})$. By Remark~\ref{rem:TLA} there exists a unique $w \in \FC(A_{n+1})$ such that $\theta(b_w)=d'$. We know that any reduced expression $s_{i_1}\cdots s_{i_r}$ of $w$ is alternating,  contains at most one generator labeled by $s_1$ and at most one labeled by $s_{n+1}$. Hence the element $\tw:=\ts_{i_1}\cdots \ts_{i_r}$ is in $(\ALT)$, $d(\tw)$ is equal to $d'$ and since in $H(\tw)$ there are no forks, step (D4) of the Decoration algorithm~\ref{algorithmdecoration} adds $\bullet$ and $\circ$ in a way that $\mathtt{dec}(H(\tw))=d$. 
\end{proof}

The following algorithm associates to any edge $e$ of an admissible $\mathcal{A}$-diagram $d$, decorated with at least a $\tn$ or a $\tb$, a heap $H_e \in (\ALT)$, containing a fork $f$ such that:
\begin{itemize}
    \item[(a)] $\mathtt{dec}(H_e)$ has the edge $e$ decorated as in $d$, and all the remaining edges are non-propagating of type $(i,i+1)$, $(i',(i+1)')$ or vertical edges $(i,i')$. 
    \item[(b)] if $\gamma= \gamma(H_e,f)$ then $(h,k)_\gamma=e$;
        
\end{itemize}

To construct such a heap, we consider a plane grid $\mathcal{G}$ with $n+1$ columns and infinite rows, where each vertex in column $i$ is labeled $\ts_i$.

\begin{Definition} \label{def-He}
Consider an admissible $(\ALT)$-diagram $d\in \GD(\tC_n)$ with an edge $e$ decorated with at least a $\tn$ or a $\tb$. 
\begin{itemize}
    \item[(H1)] If $e$ is a decorated loop, then set $H_e=H_{\infty}$ (see Definition~\ref{def:complete}).
    \item[(H2)] Let $e=(h,k)$ be a non-propagating edge in the north face with $h\succ k$ and a $\tn$ decoration. Observe that necessarily $h$ is odd and $k$ is even, and $h\geq 3$. 
    \begin{itemize}
        \item[1.] Define an up-down path (see Section~\ref{sec:snakes_paths}) in the grid $\mathcal{G}$ starting in a node $B_x$ labeled $\ts_x$, $x=h-1$, having a left-up first step and going left for $(h-2)$-steps until the node $\ts_1$. Being $h-1$ even the step reaching the node $\ts_1$ is left-up.
         \item[2.] Consider a left fork $f_l$ having as top node the $\ts_1$ of (H2.1).
         \item[3.] Construct an up-down path starting from the bottom node of the fork $f_l$ with first step right-up and ending in a node $B_y$ labeled by $\ts_y$ with $y=k-1$ if there is only the $\tn$-decoration, and reaching  a node $\ts_{n+1}$ if there is also a $\tb$-decoration. In both cases the last step of the path is right-down. 
         \item[4.] In the latter case consider a right fork $f_r$ with bottom node the $\ts_{n+1}$ of (H2.3).
         \item[5.] Construct an up-down path starting from the top node of the right fork $f_r$,  with first step left-down and ending in a node $B_y$ labeled by $\ts_y$ with $y=k$.
         \item[6.] Set $H_e$ the alternating heap associated to the snake path from $B_x$ to $B_y$. See $H_{(5,12)}$ and $H_{(10',7')}$ in Figure~\ref{Surj-1}.
         \end{itemize}
    \item[(H3)] Let $e=(h,k)$ with $h$ in the north face and $k$ in the south face and with $p$ $\tn$ decorations and $q$ $\tb$ decorations. All white and black decorations alternate hence $q=p$ or $q=p\pm 1$. Moreover $h$ and $k$ are odd, and $h\geq 3$.
\smallskip

    Suppose first that $p=1$ and $q=0$.  
  \begin{itemize}
        \item[1.] Define an up-down path $\gamma_{start}$ starting from a node $B_x$ labeled $\ts_x$, $x=h-1$, having a left-up first step and going left for $(h-2)$-steps until the node $\ts_1$ inside a half-horizontal region. Being $h-1$ even the step reaching the node $\ts_1$ is left-up.  
        \item[2.] Define an up-down path $\gamma_{end}$ starting from a node $B_y$ labeled $\ts_y$, $y=k-1$, lying in the same horizontal line of $B_x$, having a left-down first step and going left for $(k-2)$-steps until it reaches the node $\ts_{1}$ with a left-down step. This is actually the node labeled $\ts_1$ right below the node $\ts_1$ of step (H3.1). These two nodes give rise to a left fork. 
        \item[3.] Set $H_e$ the alternating heap associated to the snake path from $B_x$ to $B_y$.
    \end{itemize}
   \smallskip
   
    Suppose now that $p,q\geq 1$, the highest decoration is $\tn$ and that the lowest is $\tb$, hence $q=p$. 
    \begin{itemize}
        \item[$1'$.] Define an up-down path $\gamma_{start}$ starting from a node $B_x$ labeled $\ts_x$, $x=h-1$, having a left-up first step and going left for $(h-2)$-steps until the node $\ts_1$ inside the same half-horizontal region. Being $h-1$ even the step reaching the node $\ts_1$ is left-up.
        \item[$2'$.] Define an up-down path $\gamma_{end}$ starting from a node $B_y$ labeld $\ts_y$, $y=k$, lying $2p-1$ horizontal lines below the horizontal line through $B_x$, having a right-down first step and going right for $(n+1-k)$-steps until the node $\ts_{n+1}$ inside the same half-horizontal region. Being $n+1$ even the step reaching the node $\ts_{n+1}$ is right-up.
        \item[$3'$.] Join $\gamma_{start}$ with $\gamma_{end}$ and with a sequence of $2p-1$ complete up-down paths from $\ts_1$ to $\ts_{n+1}$ and viceversa by obtaining a snake-path crossing $p$ left-forks and $q$ right forks. The complete up-down paths going from $\ts_1$ to $\ts_{n+1}$ start and end with a right-up step while those going backward from $\ts_{n+1}$ to $\ts_{1}$ start and end with a left-up step. 
        \item[$4'$.] Set $H_e$ the alternating heap associated to the snake path from $B_x$ to $B_y$.
         \end{itemize}         
\end{itemize} 
 
It is easy to verify that properties (a) and (b) listed above holds for the heap $H_e$.

There are other cases to be considered: $e$ is non-propagating in the north face and it has only a $\tb$ decoration or it is non-propagating in the south-face with a $\tn$, a $\tb$ or both; $e$ is propagating with only a $\tb$ decoration or it starts or ends with a different decoration with respect to the treated case. In all these situations, similar algorithms can be defined to obtain heaps $H_e$ with the properties (a) and (b) listed above. Some of these examples are represented in Figures~\ref{Partial-Heaps-H} and \ref{Surj-2}.
\end{Definition}

\begin{figure}[h]
	\centering
	\includegraphics[width=0.8\linewidth]{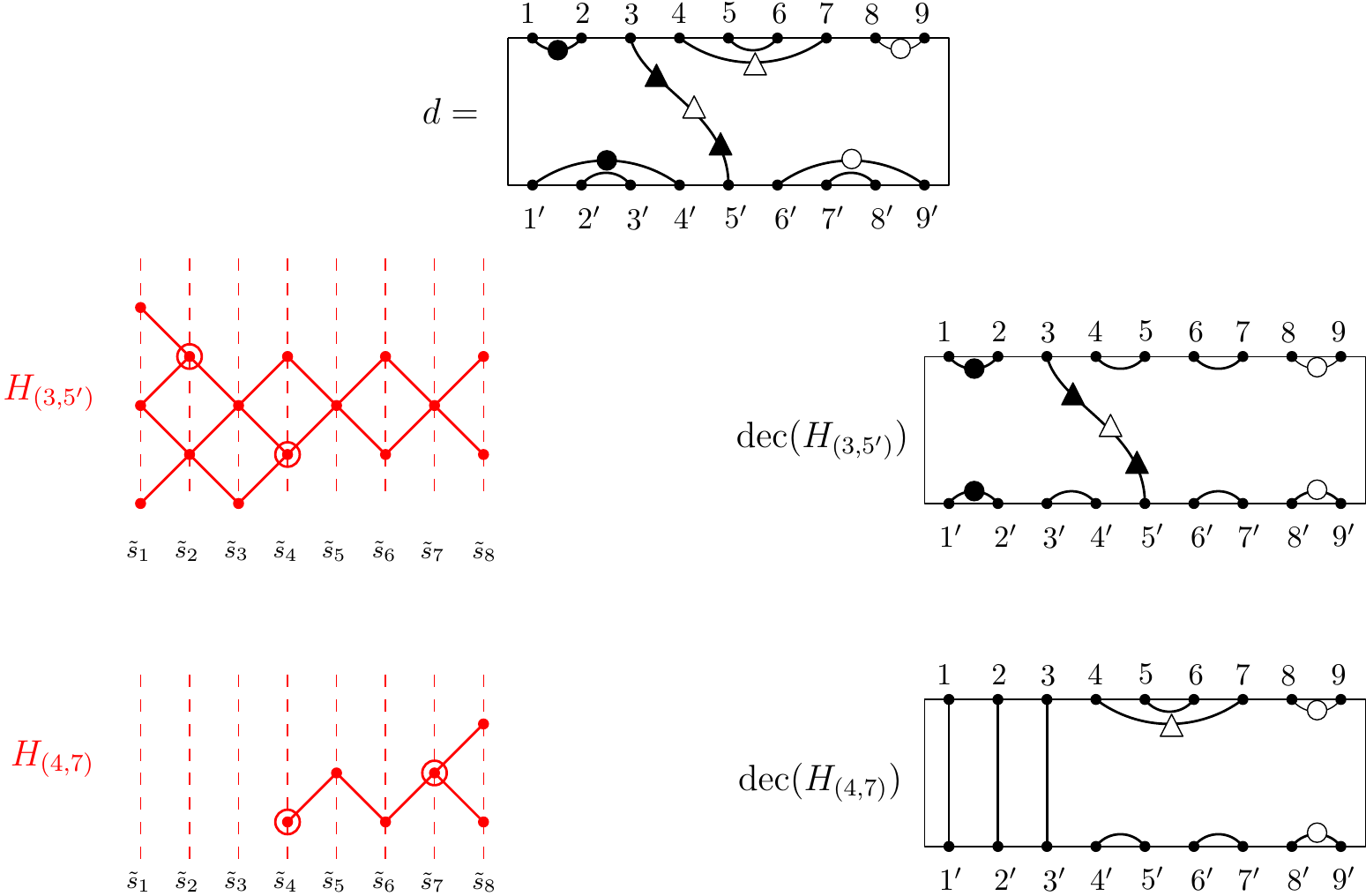}
	\caption{The alternating heaps $H_{(3,5')}$ and $H_{(4,7)}$.}
	\label{Partial-Heaps-H}
\end{figure}

Consider a $\mathcal{A}$-diagram $d$ with an edge $e$ decorated with at least a $\tn$ or a $\tb$. We recall that the only possible loops in $d$ are of the form $\lott$.
\begin{Definition}
We denote by $d_{\check{e}}$ the diagram obtained from $d$ by erasing $e$ if $e=\lott$ or by removing all the decorations on $e$ otherwise.
\end{Definition}
Suppose there exists an alternating heap $\check{H}\in (\ALT)$ such that $\mathtt{dec}(\check{H})=d_{\check{e}}$. By Proposition~\ref{prop:E}, we can find $\gamma \in \Snakes(\check{H})$ such that $E(\gamma)=e$.  

 Since the edge $e$ of $d$ has at least either a $\tn$ or a $\tb$ then it can  be deformed so as to take the black decorations  to the left wall of the diagram and the white decorations to the right wall simultaneously without crossing any other edge. All these  observations allow us to give the following definition.

\begin{Definition} Let $d$ be a $\mathcal{A}$-diagram with an edge $e$ decorated with at least a $\tn$ or a $\tb$. 
\begin{itemize}
\item[(a)]
A non-propagating edge $(h,k)$ (resp. $(h',k')$) of $d_{\check{e}}$ is \emph{above} (resp. \emph{below}) $e$ if at least one of the vertical lines $hh'$, $kk'$ crosses the deformation of $e$ in $d$. If the  edge $e$ has only black (resp. white) decorations then a non-propagating edge can  be neither above nor below $e$.
\item[(b)] If $e\neq \lott$ is  non-propagating on the north (resp. south) face, we say that a propagating edge is \emph{below} (resp. \emph{above}) $e$ if one of its  nodes is on the left of $e$ and the other one is on the  right of $e$.
\item[(c)]  We will denote by $SP_N(e)$ (resp. $SP_S(e)$)  the set of nodes of all $\gamma \in \Snakes(\Hc)$ such that $E(\gamma)$ is an edge above (resp. below) $e$.
 \item[(d)] The set $SP(e)=SP_N(e) \cap SP_S(e)$ is the set of the {\em  splitting nodes} of $e$. 
 \end{itemize}
\end{Definition}

\begin{figure}[h]
	\centering
	\includegraphics[width=0.9\linewidth]{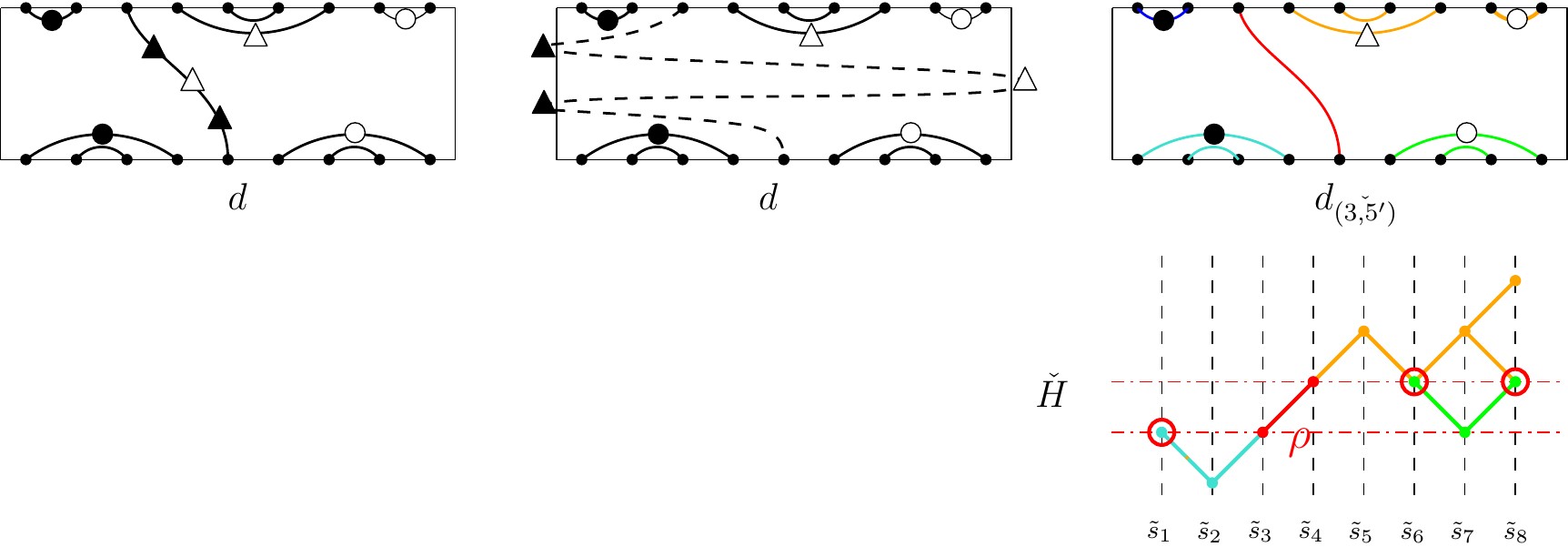}
	\caption{The splitting nodes in $SP((3,5'))$ are circled. The diagram $d_{\check{(3,5')}}$ is $\mathtt{dec}(\check{H})$.}
	\label{Splitting-nodes}
\end{figure}

\begin{Lemma} Let $d$ be an $\mathcal{A}$-diagram with an edge $e$ decorated with at least  a $\tn$ or a $\tb$. Let $\check{H} \in (ALT)$ such that $\mathtt{dec}(\check{H})=d_{\check{e}}$. Finally, let $\rho \in \Snakes(\check{H})$ such that $E(\rho)=e$. Then
\begin{itemize} 
\item[(a)]  
\begin{enumerate}
    \item[1.]   If  $e$ is the unique decorated loop in  $d$, then $SP(e)=\{\ts_1, \ts_3,\ldots, \ts_{n+1}\}$ and these nodes can be depicted in the same horizontal line.  
    \item[2.] If $e$ is neither a loop nor a vertical edge, then the splitting nodes on the left (resp. on the right) of $\rho$ can be depicted in the horizontal line passing through the leftmost (resp. rightmost) node of $\rho$.
    \item[3.] If $e$ is a vertical edge, then the splitting nodes can be depicted in the same horizontal line. 
\end{enumerate}
\item[(b)] The north and the south border of the heap $H_e$, in Definition~\ref{def-He}, contain nodes labeled as the splitting nodes and nodes labeled as those of the snake path $\rho$. 
\end{itemize}
\end{Lemma}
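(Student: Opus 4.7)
The plan is to prove parts (a) and (b) by case analysis following the trichotomy of part (a), using Proposition~\ref{prop:E} and Theorem~\ref{prop:cycle} to translate every edge of $d$ back to a snake path in $\check{H}$. Throughout, the key principle is that an edge of $\mathtt{dec}(\check{H})$ is ``above'' (resp.\ ``below'') another edge of $d$ exactly when, in $\check{H}$, the support of the associated snake path sits above (resp.\ below) the support of $\rho$ in the top-to-bottom graphical representation.

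For (a)(1), when $e=\lott$ is the unique decorated loop, Proposition~\ref{prop:E} identifies $\rho$ with a cycle $\gamma_\infty\in\Snakes(\check{H})$, and by Theorem~\ref{prop:cycle} this cycle comes from a complete horizontal subheap $H_\infty$ inside $\check{H}$. The loop lives at the vertical level of the middle row of $H_\infty$, so each non-propagating edge on the north face of $d$ whose vertical lines cross the loop corresponds to a snake path supported above that middle row, and symmetrically on the south face. The odd-indexed nodes $\ts_1,\ts_3,\dots,\ts_{n+1}$ on the middle row are the top nodes of the lower forks of $H_\infty$ and the bottom nodes of the upper forks, so they lie on both upper and lower snake paths. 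They therefore constitute $SP(e)$, and by construction of $H_\infty$ they share a common horizontal line.

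For (a)(2), $\rho$ is non-degenerate, say $\gamma_{x,y}$ or $\gamma_{\vec{x,y}}$, with leftmost node $B_L$ and rightmost node $B_R$. A splitting node $v$ must be shared by at least one snake path above $\rho$ and one below $\rho$; since $\check{H}$ is alternating and the snake paths partition the Hasse edges, $v$ has to lie on the boundary of $\rho$ from both sides, which forces $v$ to occupy the same horizontal row as one of the extremal turning points of $\rho$. Using Lemma~\ref{successivo}, Lemma~\ref{Lemmahk_nodeleft} and Lemma~\ref{Lemmahk_noderight} to control how the $(h,k)$-pairs of snake paths on either side of $\rho$ are ordered under $\succ_\ell$ and $\succ_r$, I would show that every left splitting node aligns horizontally with $B_L$ and every right splitting node aligns with $B_R$. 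For (a)(3), if $e=(h,h')$ is vertical, then $\rho=\gamma_{\check{h}}$ is degenerate and no nodes labeled $\ts_{h-1},\ts_h$ appear in $\check{H}$; any snake path whose edge crosses column $h$ in $d$ has boundary support at the horizontal row where $\rho$ would have lived, so $SP(e)$ sits on this single row.

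Part (b) is then a direct comparison between the output of (a) and the explicit construction of Definition~\ref{def-He}. In each sub-case (H1)--(H3), the up-down paths constituting $H_e$ are designed so that the north and south borders consist exactly of the labels of the endpoint and turning nodes of $\rho$ together with the labels of the forks and the surrounding alternating chain, which by (a) is precisely the set of splitting labels. The main obstacle will be to make the alignment argument of case (a)(2) fully rigorous when $\rho$ oscillates several times between columns $\ts_1$ and $\ts_{n+1}$ via multiple forks: a careful induction on the number of direction changes of $\rho$, combined with the ordering lemmas of Section~\ref{sec:snakes_paths}, is expected to handle this, but one has to check that no accidental ``inner'' splitting node arises inside a horizontal region strictly between $B_L$ and $B_R$.
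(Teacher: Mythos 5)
Your argument for (a)(1) rests on a false premise. Since $e$ is the \emph{unique} decorated loop of $d$ and $d_{\check e}$ is obtained by \emph{erasing} $e$, the diagram $d_{\check e}$ has no loops at all; hence, by Proposition~\ref{prop:E}, the heap $\check H$ contains \emph{no} complete horizontal subheap and no cycle $\gamma_\infty$, and there is no $\rho$ with $E(\rho)=e$ in this case. Your identification of $SP(e)$ with "the top nodes of the lower forks and the bottom nodes of the upper forks of $H_\infty$ inside $\check H$" therefore refers to a structure that does not exist in $\check H$. The actual task is to locate, inside a loop-free $\check H$, the odd-labeled nodes shared by north-face and south-face snake paths, and to show that the highest and lowest such occurrences coincide — which the paper does precisely by using the \emph{uniqueness} of the loop (a gap between them would force either a second complete horizontal subheap, hence a second loop, or a propagating edge, both impossible). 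That uniqueness hypothesis plays no role in your argument, which is a sign it cannot be right.

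In (a)(2) the assertion that a splitting node "has to lie on the boundary of $\rho$ from both sides" is also incorrect: by definition $SP(e)=SP_N(e)\cap SP_S(e)$ consists of nodes lying on a snake path whose edge is above $e$ and on one whose edge is below $e$, and such nodes typically sit in connected components of $\check H$ other than $K_\rho$, or well to the left/right of $\rho$ within $K_\rho$ (this is exactly why Definition~\ref{def:H'H} must attach separate pieces of $\check H$ at these nodes). The substantive content of (a)(2) — that consecutive splitting labels differ by $2$, that the property propagates along a horizontal line to give the explicit sets $\{\ts_1,\ts_3,\ldots,\ts_*\}$ and $\{\ts_{n+1},\ts_{n-1},\ldots,\ts_+\}$, and that these align with the extremal nodes of $\rho$ — is precisely what you defer to "a careful induction," and part (b) (which needs the traversal of $\gamma(H_e,f)$ through both borders of $H_e$ to produce the two copies of $\rho$) inherits that gap. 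As it stands, the proposal does not constitute a proof of either part.
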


\begin{proof} 
\begin{itemize}
\item[(a.1)] Suppose that $e$ is the unique decorated loop in $d$. Hence in $d$ and so in $d_{\check{e}}$ there are no propagating edges, $n$ is even, and in both $SP_N(e)$ and $SP_S(e)$ there is at least an occurrence of a node labeled $\ts_i$ for  $i=1,3,\ldots, n-1, n+1$. This is immediate since all the edges in the north (resp. south) face are above (resp. below) the edge $e$. 

If one of these $\ts_i$, $i$ odd, is an isolated node, then $\ts_i\in \gamma_{i} \cap \gamma_{i'} \in \Snakes(\check{H})$, hence $\ts_i\in SP(e)$. 

Thus, consider the lowest and the highest occurrences of a non isolated node $\ts_i$ with $i$ odd in $SP_N(e)$ and  $SP_S(e)$, respectively. This condition yields that no other nodes $\ts_i$ in $\check{H}$ are  between them. If they coincide, then $\ts_i \in SP(e)$ and we are done. If not, since $\check{H}$ is alternating and $SP_N(e)$ and $SP_S(e)$ contain at least an occurrence of any odd generators, one can prove that in $\check{H}$ there is either a complete horizontal subheap that would produce another loop in $d_{\check{e}}$ against the uniqueness, or a snake path $\gamma$ with $E(\gamma)$ propagating, which is not possible. 

Now, we show that no node $\ts_i$ with $i$ even is in $SP(e)$. In fact, if this happens, since $SP(e)$ contains all odd nodes, then there would exist two snake paths corresponding to non-propagating edges above and  below $e$, respectively, with a common edge $\ts_i\ts_{i+1}$ or $\ts_{i-1}\ts_i$, which is not possible. 

Moreover, all the splitting nodes can be depicted  in the same horizontal line in each connected component. If not, there would exist in $\check{H}$ an increasing or decreasing  chain of type $\ts_i -\ts_{i+1}-\ts_{i+2}$, with $\ts_i, \ts_{i+2} \in SP(e)$ in $\check{H}$ giving rise to a propagating edge in $d_{\check{e}}$, which is not possible.

\item[(a.2)] Let $e=(h,k) \in d$, $k\neq h'$ decorated with $\tn$ (resp. $\tb$). 
If there is  a connected component $K$ in $\check{H}$ on the left (resp. right) of the connected component $K_{\rho}$ containing $\rho$, then  the leftmost and rightmost nodes of $K$ are labeled by $\ts_{x}$ and $\ts_{y}$ with $x$ and $y$ of the same parity: otherwise there would be a propagating edge with starting and ending  node  on the left (resp. right) of $e$ and so it would not be possible to deform $e$ to the left (resp. right) side of $d$.
 
Suppose that there exist two consecutive splitting nodes on the left of $\rho$ labeled by $\ts_p$ and $\ts_{q}.$ 
 By definition they are common nodes of paths corresponding to non-propagating edges on the left (resp. right) of $e$. If $\ts_p$ and $\ts_q$ are in two  distinct connected components, then necessarily $q=p+2$, otherwise if  $q=p+2l$  with $l>1$ then at least  a vertical edge occurs in $d_{\check{e}}$ on the left (resp. right) of $e$ which is impossible. Furthermore,   being  $\ts_p$ and $\ts_{q} $ in distinct connected components they  can be depicted in the same horizontal line.

Now consider a splitting node $\ts_p$ in a connected component $K$ on the left (resp. on the right) of $K_\rho$ or in $K_\rho \setminus \rho$. Then we show that all the nodes in $K$ or $K_\rho \setminus \rho$ of the form $\ts_{p\pm 2l}$ and in the same horizontal line of $\ts_p$ in our representation of $\check{H}$ are also splitting nodes. We first prove the result for two of such consecutive nodes  $\ts_p $ and  $\ts_{p+2}$ in the same connected component $K$ and in the same horizontal line. Since $\check{H}$ is alternating,  $K$ is connected and $\ts_p$ is a splitting node, we may suppose that there exists a snake path in $\check{H}$ passing through  $\ts_p $, $\ts_{p+1}$ and  $\ts_{p+2}$ corresponding to a non-propagating edge on the south face  of the diagram $d_{\check{e}}$.  Then either there is a snake path in $\check{H} $ passing through  $\ts_p $, $\ts_{p+1}$ and  $\ts_{p+2}$ corresponding to a non-propagating edge in the north face  of $d_{\check{e}} $ and so $\ts_{p+2}$ is a splitting node  or the snake path corresponding to the non-propagating edge in the north  face of $d_{\check{e}}$ stops in $\ts_p$, therefore the non-propagating  edge of $d_{\check{e}}$ starting in $\ts_{p+2}$ in the north face corresponds to a snake path in $\check{H}$ through $\ts_{p+2}$ and so $\ts_{p+2}$ is a splitting node.

Assume now that the splitting node $\ts_p $ is in $K_\rho$ on the left (resp. right) of $\rho$. Observe that only one of the two snake paths defining $\ts_p $ can correspond to a propagating edge so first suppose that both of such edges are non-propagating. Let us show that the node $\ts_{p+2}$  (resp.  $\ts_{p-2}$) in the horizontal line through  $\ts_p $ is in $\check{H}$. Since $\check{H}$ is alternating one of the two  nodes adjacent to $\ts_p$ labeled by $\ts_{p+1}$  (resp.  $\ts_{p-1}$)  is in $\check{H}$.  Being $\ts_p$  a splitting node, one of the two snake paths corresponding to the two non-propagating edges that identify $\ts_p$ has to pass through one of these two nodes $\ts_{p+1}$  (resp.  $\ts_{p-1}$). If the node $\ts_{p+2}$ (resp. $\ts_{p-2}$) in the horizontal line through  $\ts_p $ is missing then  either we have a fork against the fact that $\check{H}$ is alternating or the two non-propagating edges that identify $\ts_p$ are on the same face of $d_{\check{e}}$ against the fact that $\ts_p$ is a splitting node. Furthermore, either $\ts_{p+2}$ (resp. $\ts_{p-2}$) is in $\rho$ or it is a splitting node collinear with $\ts_p$ and in the same connected component and so we can apply the argumentation given for this case above. On the other hand, it is easy to see that if $\ts_p$ is a splitting node then $\ts_{p+1}$ can not.

In the other case, when one of the two edges through the splitting node $\ts_p$ is propagating, then $e$ is necessarily non-propagating, without loss of generality suppose it is on the south face. By definition, the propagating edge has one node after (resp. before) $e$ so the corresponding snake path must contain all the nodes $\ts_{p+2l}$  (resp. $\ts_{p-2}$)  on the left (resp. on the right) of $\rho$ and in $\rho$ and in the same horizontal line, so all these nodes are in $SP_N(e)$.  On the other hand, all such nodes are in $SP_S(e)$ because all the edges on the south face of $d_{\check{e}}$ on the left of $e$ are non- propagating and below $e$.

Being $\ts_p$ a splitting node in $K_\rho$ with $\check{H}$ alternating, the leftmost node of $\rho $ is labeled by $\ts_{p+2l}$ and it can be depicted collinear with $\ts_p$, so they are all in the same horizontal line. Moreover the set of splitting nodes is :
\begin{eqnarray}
\mbox{$\{\ts_{1}, \ts_{3},\ldots, \ts_{*}\}$, where $\ts_*\in \{\ts_{h-1}, \ts_{h-2}\}$} \label{eq:splitting}  \\
\mbox{(resp. $\{\ts_{n+1}, \ts_{n-1}, \ldots, \ts_{+}\}$, where $\ts_{+}\in \{\ts_{k}, \ts_{k+1}\}$).} \label{eq:splitting1}
\end{eqnarray}

\item[(a.3)] If $e=(k,k')$ is a vertical edge, then $\rho=\gamma_{\check{k}}$ is the degenerate empty path and in $\check{H}$ there are no nodes labeled $k-1,k$. So in $\check{H}$ the elements labeled from $1$ to $k-2$ and from $k+1$ to $n+1$ are in different connected components. As we proved above, the splitting nodes inside the same connected can be depicted in the same horizontal line. 

\item[(b)] If $e$ is a decorated loop, then $H_{e}$ is a complete horizontal region and the result is obvious. Suppose that $e=(h,k)$ is decorated with at least a $\tn$ (resp. $\tb$). The set of splitting nodes is described in \eqref{eq:splitting} (resp. \eqref{eq:splitting1}). By (H2) and (H3) of Definition~\ref{def-He}, $H_{e}$ contains pairs of nodes labeled by $\ts_1, \ts_3,\ldots, \ts_{*}$, where $\ts_*\in \{\ts_{h-1}, \ts_{h-2}\}$ (resp. $\ts_{n+1}, \ts_{n-1},\ldots, \ts_{+}$, where $\ts_{+}\in \{\ts_{k}, \ts_{k+1}\}$), in its north and south border, which prove the first part of the statement.   

The heap $H_{e}$ contains at least a fork $f$ and $\gamma(H_{e},f)$ starts and ends in nodes  labeled $\ts_x$ or $\ts_{y}$, according to Definition~\ref{def-hk}. Depending on the positions of $h$ and $k$ and on the highest decoration in $e$, the path $\gamma(H_{e},f)$ first reaches the left or the right side of $H_e$, crosses the highest fork of $H_{e}$ and continues on the other direction. In doing so, $\gamma(H_{e},f)$ necessary passes through both nodes $\ts_x$ and $\ts_{y}$, hence crosses entirely $\rho$ inside the top horizontal region of $H_e$. This provides a first copy of $\rho$ in the north border of $H_{e}$, (see the red paths inside $H_{(5,12)}$ and $H_{(10',7')}$ in Figure~\ref{Surj-1}). 
Furthermore, the path $\gamma(H_{e},f)$ continues going back and forth between  the first to the last column of the $H_e$ several times depending on the number of decorations, and ends in a node in the south border. Once again, $\gamma(H_{e},f)$ passes from a node labeled $\ts_x$ to one $\ts_{y}$ in the lowest horizontal region of $H_e$, and this provides a copy of $\rho$ in the south boarder of $H_e$. A non trivial example is depicted in Figure~\ref{Surj-2}. The two red paths in $H_{(1,9')}$ represents the two occurrences of $\rho$ in the north and south border of $H_e$. This proves the second part of the statement.
\end{itemize}
\end{proof}

\begin{Remark} In the hypotheses of the previous lemma, if there is not a unique decorated loop in $d$, then $\dc$ contains a decorated loop too, and so from Proposition~\ref{prop:E}, it follows that $\Hc$ contains at least a complete horizontal subheap. In particular, this yields that in $\Hc$ there are two distinct sets of nodes labeled $\{\ts_1, \ts_3,\ldots, \ts_{n+1}\}$ depicted in the same horizontal line. For the purpose of the next definition we will call call these two sets of nodes splitting nodes as well.
\end{Remark}

The previous results allow the following definition.

\begin{Definition}\label{def:H'H}
 Let $d$ be a $\mathcal{A}$-diagram with an edge $e=(h,k)$ decorated with at least either a $\tn$ or a $\tb$ and let $\Hc \in (\ALT)$ such that $\mathtt{dec}(\Hc)=\dc$. 
 We define $H(d,e)$ to be the heap obtained by $H_{e}$ and $\Hc$, by attaching:
\begin{itemize}
\item to the nodes on the north border of $H_{e}$ labeled as the splitting nodes, the subheap of $\Hc$ made of the nodes above the splitting nodes;
\item to the nodes on the south border of $H_{e}$ labeled as the splitting nodes, the subheap of $\Hc$ made of the nodes below the splitting nodes;
\item to the nodes on the north border of $H_{e}$ labeled as those of $\rho$, the subheap of $\Hc$ made of the nodes above or connected to $\rho$ if $\rho$ is non-degenerate;
\item to the nodes on the south border of $H_{e}$ labeled as those of $\rho$, the subheap of $\Hc$ made of the nodes below or connected to $\rho$ if $\rho$ is non-degenerate;
\end{itemize}
and by copying the connected components of $\Hc$ not containing splitting nodes.
\end{Definition}

In Figure~\ref{Construction-H-1}, we give an example of the construction of $H(d,e)$ with $e=(3,5')$, based on the diagrams in Figure~\ref{Splitting-nodes}. More complete examples are shown in Figures~\ref{Surj-1} and \ref{Surj-2}.
\begin{figure}[h]
	\centering
	\includegraphics[width=0.9\linewidth]{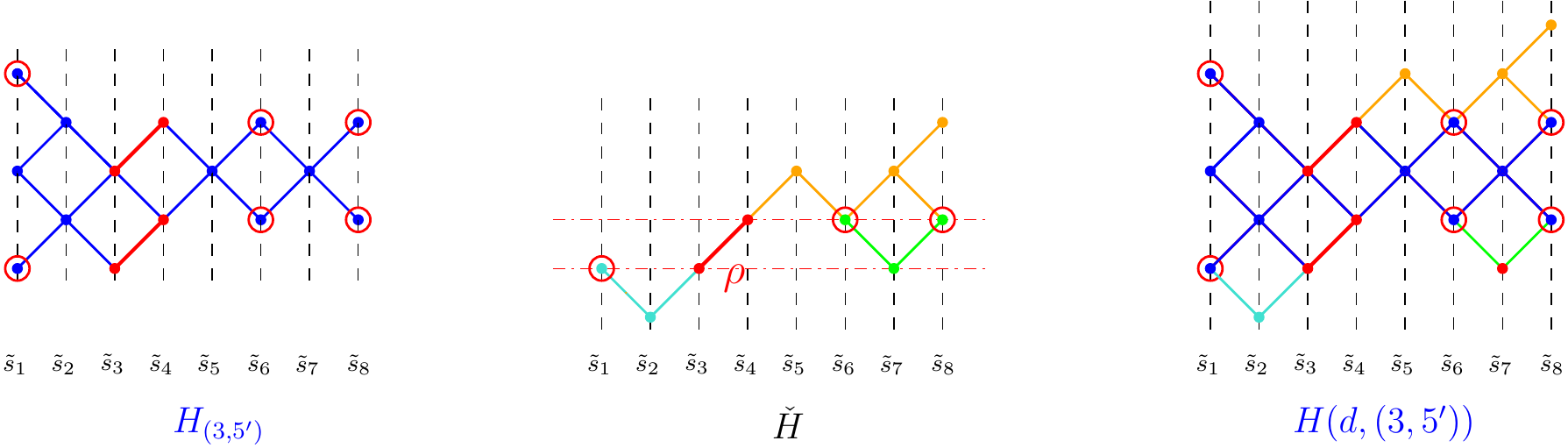}
	\caption{How obtain $H(d,e)$ from $H_e$ and $\Hc$.}
	\label{Construction-H-1}
\end{figure}

\begin{Theorem}\label{teosuralt}
The map $\mathtt{dec}_A:(\ALT) \longrightarrow \mathcal{A}$-diagrams is surjective.
\end{Theorem}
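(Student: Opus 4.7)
The plan is to prove surjectivity by strong induction on the quantity $N(d)$ defined as the total number of edges of $d$ carrying at least one $\tn$ or $\tb$ decoration, plus the number of decorated loops $\lott$ occurring in $d$. In the base case $N(d)=0$, the diagram $d$ has neither loops nor triangle decorations, and Lemma~\ref{lem:casobase} directly supplies an alternating heap $H\in(\ALT)$ with $\mathtt{dec}_A(H)=d$.

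For the inductive step, assume $N(d)\geq 1$ and choose a decorated edge or loop $e$ of $d$. I would consider $d_{\check e}$, the diagram obtained by stripping the decorations on $e$ (or deleting $e$ if it is a loop). A short check confirms that $d_{\check e}$ remains an $\mathcal{A}$-diagram: being an $\mathcal{A}$-diagram is defined by the absence of a $\tn$ on $(1,1')$, the absence of a $\tb$ on $(n+2,(n+2)')$, and the absence of the specific $\mathcal{Z}$-pattern on propagating edges, and none of these features can be created by erasing decorations on some other edge. Since $N(d_{\check e})=N(d)-1$, the induction hypothesis provides $\Hc\in(\ALT)$ with $\mathtt{dec}_A(\Hc)=d_{\check e}$, and Proposition~\ref{prop:E} yields a unique $\rho\in\Snakes(\Hc)$ with $E(\rho)=e$. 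I would then build the auxiliary heap $H_e$ according to Definition~\ref{def-He} and glue $\Hc$ to $H_e$ as prescribed by Definition~\ref{def:H'H} to obtain $H(d,e)$. The splitting-node analysis preceding Definition~\ref{def:H'H} ensures that the labels on the north and south borders of $H_e$ align with the required nodes of $\Hc$, so that the gluing is well defined.

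The remaining verifications would be: (a) that $H(d,e)$ is still alternating, meaning that no gluing creates two consecutive identically-labeled nodes in any column; (b) that $\Snakes(H(d,e))$ consists of the snake paths of $\Hc$ which avoid $\rho$, unchanged by the local surgery, together with one new snake path $\gamma_e$ produced by the forks of $H_e$ and satisfying $E(\gamma_e)=e$; and (c) that applying Decoration algorithm~\ref{algorithmdecoration} to $H(d,e)$ reproduces all decorations of $d_{\check e}$ on their original edges and, through the crossings of $\gamma_e$ with the left and right forks of $H_e$, adds precisely the decorations of $d$ on $e$ in the correct alternating order. Step (D4) of the algorithm is undisturbed because, in an $\mathcal{A}$-diagram, $e$ is never the edge $(1,1')$ decorated by $\tn$ nor $(n+2,(n+2)')$ decorated by $\tb$.

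The main obstacle will be (c) together with vertical-position bookkeeping: one has to verify that the specific sequence of left and right forks traversed by $\gamma_e$ inside $H_e$, inherited from the shape of $e$ in the grid $\mathcal{G}$, produces exactly the recorded alternation and cardinality of $\tn$ and $\tb$ decorations on $e$, and that the gluing introduces no spurious complete horizontal subheap in $H(d,e)$ that would generate an unwanted decorated loop. These checks proceed case-by-case, according to whether $e$ is a loop, a non-propagating edge on either face, or a propagating edge, and according to which of $\tn$ or $\tb$ is the highest decoration; Definition~\ref{def-He} has been designed so that, in each case, $H_e$ is forced to carry precisely the configuration of forks required, so once these bookkeeping matters are handled the inductive step closes and surjectivity onto $\mathcal{A}$-diagrams follows.
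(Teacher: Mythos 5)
Your proposal follows essentially the same route as the paper: induction on the number of triangle-decorated edges, with Lemma~\ref{lem:casobase} as the base case, and the inductive step carried out by stripping the decorations of a chosen edge $e$ to form $d_{\check e}$, invoking the inductive hypothesis to get $\Hc$, and gluing $\Hc$ to the auxiliary heap $H_e$ of Definition~\ref{def-He} via Definition~\ref{def:H'H}; the verifications you list as (a)--(c) are exactly what the paper delegates to the splitting-node lemma and to properties (a) and (b) stated before Definition~\ref{def-He}. The argument is correct and matches the paper's proof.
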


\begin{proof} We will proceed by induction on the number $t$ of decorated edges with at least a $\tn$ or $\tb$ of a $\mathcal{A}$-diagram.
If $t=0$ the result is proved in Lemma~\ref{lem:casobase}.
So, suppose that $d$ is a $\mathcal{A}$-diagram with $t>0$ decorated edges  with at least a $\tn$ or $\tb$, and $e$ be one of such edges. The diagram $d_{\check{e}}$ is a $\mathcal{A}$-diagram with $t-1$ decorated edges. By induction, there exists a heap $\check{H} \in (\ALT)$ such that ${\mathtt{dec}}(\check{H})=d_{\check{e}}$.  Consider the heap $H(d,e)$ constructed by using  Definition \ref{def:H'H}. The heap $H(d,e)$ is in $(\ALT)$ since it is obtained by attaching to the alternating heap $H_{e}$ alternating subheaps in the splitting nodes. From the inductive hypothesis and properties (a) and (b) listed before Definition~\ref{def-He} it follows that $\mathtt{dec}(H(d,e))=d$.
\end{proof}

\begin{figure}[h]
	\centering
	\includegraphics[width=0.9\linewidth]{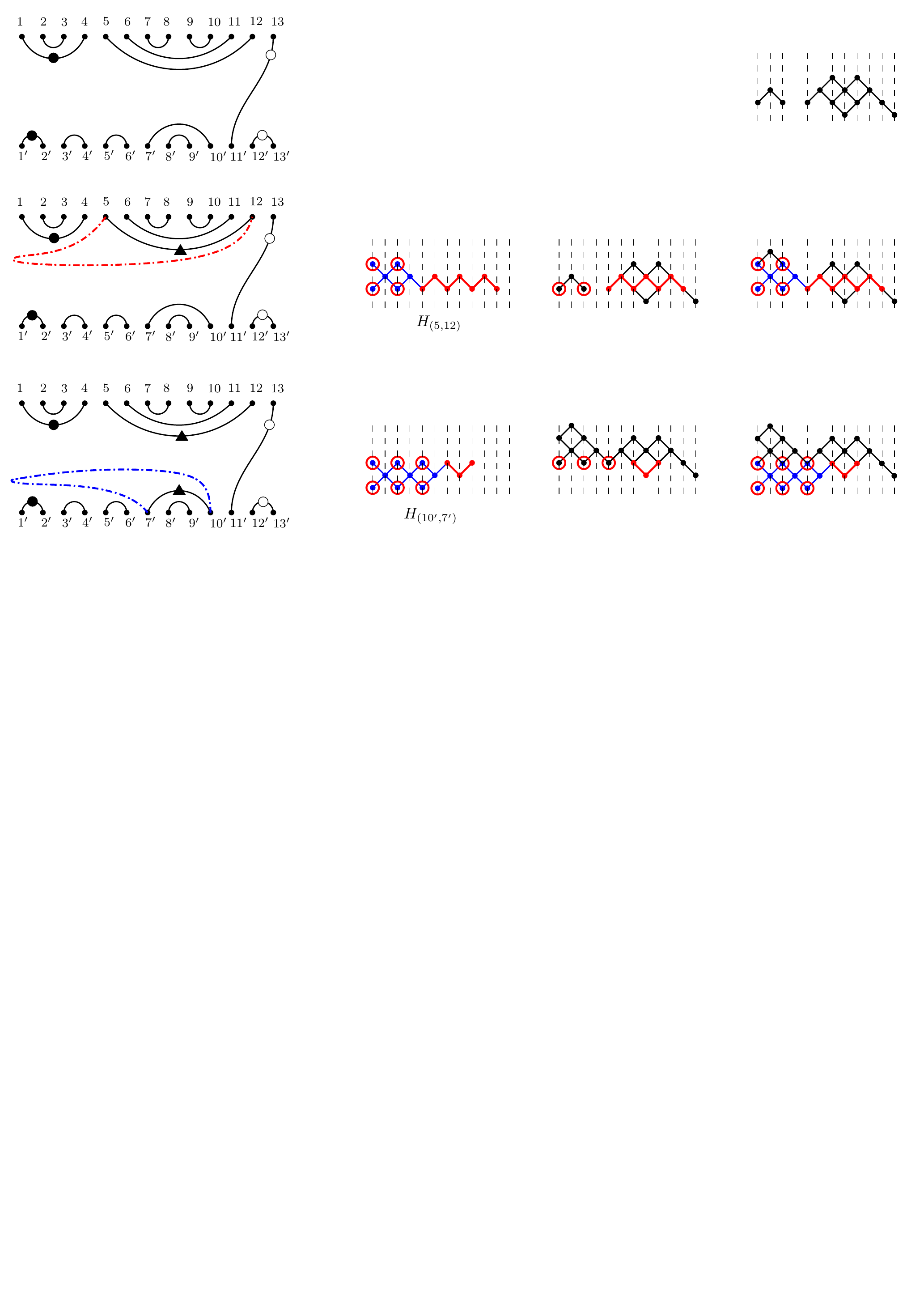}
	\caption{}
	\label{Surj-1}
\end{figure}

\begin{Theorem}\label{teosurpeak}
The map $\mathtt{dec}_P: (\LP)\cup (\RP) \cup (\LRPZ) \longrightarrow \mathcal{P}$-diagrams is surjective.
\end{Theorem}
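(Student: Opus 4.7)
The plan is to invert $\mathtt{dec}_P$ by reducing to the just-established surjectivity of $\mathtt{dec}_A$ on alternating heaps, peeling off the characteristic peak structure that distinguishes $(\LP)$, $(\RP)$, and $(\LRPZ)$ from $(\ALT)$. Let $d$ be a $\mathcal{P}$-diagram. Its definition splits into three natural subcases according to where the characteristic decorations lie: (a) $d$ has a $\tn$ on $(1,1')$ but no $\tb$ on $(n+2,(n+2)')$; (b) the symmetric situation; (c) $d$ has both, with $\mathbf{a}(d)>1$. These three subcases should produce preimages in $(\LP)$, $(\RP)$, and $(\LRPZ)$ respectively, as suggested by Corollary~\ref{cor:immagineP}. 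I will treat case (a) in detail, since (b) is symmetric and (c) combines the two constructions.

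For (a), let $j_\ell - 1$ be the number of initial vertical edges of $d$, and define $d'$ to be the diagram obtained from $d$ by deleting the $\tn$ on $(1,1')$. First I would verify that $d'$ is an $\mathcal{A}$-diagram: it has neither a $\tn$ on $(1,1')$ nor a $\tb$ on $(n+2,(n+2)')$ (we removed the former, the latter is absent in case (a)), and I must exclude the possibility that $d'$ is a $\mathcal{Z}$-diagram. This is the delicate step: a $\mathcal{Z}$-diagram has $\mathbf{a}=1$ and a $\tn$ on the first propagating edge, so I must use the hypothesis that $d$ itself was \emph{not} a $\mathcal{Z}$-diagram together with the fact that erasing a $\tn$ from a vertical edge cannot create a $\tb$ on the last propagating edge. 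By Theorem~\ref{teosuralt}, there exists $\tw_{sr}\in(\ALT)$ with $\mathtt{dec}_A(\tw_{sr})=d'$.

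Next I would show that $H(\tw_{sr})$ has no $\ts_i$-node for $i<j_\ell$ and exactly one $\ts_{j_\ell}$-node. The first part follows from Proposition~\ref{prop:E}: the initial vertical edges $(i,i')$, $i=1,\dots,j_\ell-1$, are the images of degenerate empty snake paths $\gamma_{\check i}$, and by definition of $\gamma_{\check x}$ this forces those columns of $H(\tw_{sr})$ to be empty. The presence of a single $\ts_{j_\ell}$-node is forced by the maximality of $j_\ell$, for otherwise the edge $(j_\ell,j_\ell')$ would also be vertical, contradicting the definition of $j_\ell$. Define $\tw$ to be the element whose heap $H(\tw)$ is obtained from $H(\tw_{sr})$ by attaching the peak $P_{\leftarrow}(\ts_{j_\ell})$ above, identifying its bottom $\ts_{j_\ell}$-node with the unique $\ts_{j_\ell}$-node of $H(\tw_{sr})$. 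I then need to verify the three conditions of $(\LP)$ from Definition~\ref{def:famillesCtilde}: that $H(\tw)_{\{\leftarrow\ts_{j_\ell}\}}$ is exactly $P_{\leftarrow}(\ts_{j_\ell})$ (by construction); that no $\ts_{j_\ell+1}$-node lies between the two $\ts_{j_\ell}$-nodes (the only $\ts_{j_\ell+1}$-nodes live inside $H(\tw_{sr})$, below the bottom $\ts_{j_\ell}$); and that $H(\tw)_{\{\ts_{j_\ell}\rightarrow\}}$ becomes alternating after deleting one $\ts_{j_\ell}$-element, which reduces it precisely to $H(\tw_{sr})$, already alternating by construction. A short tracking argument then shows $H(\tw)_{sr}=H(\tw_{sr})$, so that by Definition~\ref{algdecpeak} we have $\mathtt{dec}_P(\tw)=\mathtt{dec}_A(\tw_{sr})$ with a $\tn$ added on $(1,1')$, which is exactly $d$.

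Case (b) is handled by the mirror-image construction attaching $P_{\rightarrow}(\ts_{j_r})$ on the right. For case (c), I would remove both the $\tn$ on $(1,1')$ and the $\tb$ on $(n+2,(n+2)')$, apply Theorem~\ref{teosuralt} to obtain an alternating preimage, verify as above that its heap has empty columns $1,\dots,j_\ell-1$ and $j_r+1,\dots,n+1$ with unique $\ts_{j_\ell}$- and $\ts_{j_r}$-nodes, attach both peaks, and check the conditions $\LP(1)$–$(2)$, $\RP(1)$–$(2)$, and the alternating condition on $H_{\{\ts_{j_\ell},\dots,\ts_{j_r}\}}$. The hypothesis $\mathbf{a}(d)>1$ from Corollary~\ref{cor:immagineP}(c) is crucial here: it guarantees that after attaching the two peaks the reduced heap is not reducible to a single ascending/descending chain, so the resulting element lies in $(\LRPZ)$ and not in $(\PZZ)$. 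The main obstacle throughout will be the bookkeeping needed to verify that $d'$ is genuinely an $\mathcal{A}$-diagram in each subcase; once this is in hand, the construction is dictated by the fork-elimination definition of the semireduced heap and everything else follows mechanically.
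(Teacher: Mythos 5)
Your proposal follows essentially the same route as the paper: erase the $\tn$ on $(1,1')$ (and/or the $\tb$ on $(n+2,(n+2)')$), check the result is an $\mathcal{A}$-diagram, invoke Theorem~\ref{teosuralt} to get an alternating preimage, and reattach the peak(s). There is, however, one incorrect claim in your verification of $\LP(2)$: you assert that after attaching $P_{\leftarrow}(\ts_{j_\ell})$ above the unique $\ts_{j_\ell}$-node of $H(\tw_{sr})$, ``the only $\ts_{j_\ell+1}$-nodes live inside $H(\tw_{sr})$, below the bottom $\ts_{j_\ell}$.'' That is false in general: since $H(\tw_{sr})$ is alternating with a single $\ts_{j_\ell}$-node, the chain in columns $j_\ell,\,j_\ell+1$ may be $\ts_{j_\ell+1}\ts_{j_\ell}\ts_{j_\ell+1}$, so a $\ts_{j_\ell+1}$-node can sit \emph{above} the $\ts_{j_\ell}$-node. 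Because $\ts_{j_\ell}$ and $\ts_{j_\ell+1}$ do not commute, the new (top) $\ts_{j_\ell}$-node of the peak must be placed in that chain, and if it goes above the upper $\ts_{j_\ell+1}$-node you get a $\ts_{j_\ell+1}$ between the two $\ts_{j_\ell}$-nodes, violating $\LP(2)$ (and also breaking the computation of the semireduced heap). The paper handles exactly this: it duplicates the $\ts_{j_\ell}$-node so that the two copies are adjacent — one attached to the upper $\ts_{j_\ell+1}$-node, the other to the lower one — and inserts the rest of the peak between them. With that correction (and the same care in the $(\RP)$ and $(\LRPZ)$ cases), your argument goes through and coincides with the paper's.
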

\begin{proof}
Consider a decorated $\mathcal{P}$-diagram $d$ starting  with $j_\ell-1$ vertical edges and no $\tb$ decoration on the edge $(n+2,(n+2)')$ if it exists.
The diagram $d_{\check{(1,1')}}$, obtained by $d$ by erasing the $\tn$ on the edge $(1,1')$, is a decorated $\mathcal{A}$-diagram, so by Theorem \ref{teosuralt} there exists an alternating heap $\Hc$ such that $\mathtt{dec}(\Hc)=d_{\check{(1,1')}}$. Note that $\Hc$ starts with a single node $s_{j_\ell}$. 
There are two possibilities: if there is at most a node $\ts_{\ell+1}$ in $\Hc$, then duplicate the node $\ts_{\ell}$ and place it in the vertical opposite position with respect to $\ts_{\ell+1}$; if there are two nodes $\ts_{\ell+1}$, then duplicate $\ts_{\ell}$ and attach the original $\ts_{\ell}$ to the top node $\ts_{\ell+1}$ and the new node $\ts_{\ell}$ to the bottom node $\ts_{\ell+1}$. In both cases define $H$ by attaching a left peak starting from $\ts_1$ ending with the two $\ts_{j_\ell}$ to the two duplicated nodes of $\Hc$.

Clearly the so obtained heap $H$ is in $(\LP)$ and $\mathtt{dec}_P(H)=d$. A symmetric argument works for $\mathcal{P}$-diagrams finishing with $j_r-1$ vertical edges and $(n+2,(n+2)')$ decorated with a unique $\tb$, in this case $H$ will be in $(\RP)$. Finally, combining the two previous arguments we settle the remaining ``left-right'' $\mathcal{P}$-diagrams.
\end{proof}

\begin{Theorem}\label{teosurzz}
The map $\mathtt{dec}_Z: (\ZZ)\cup (\PZZ) \longrightarrow \mathcal{Z}$-diagrams is surjective.
\end{Theorem}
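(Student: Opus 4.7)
The plan is to construct, for every $\mathcal{Z}$-diagram $d$, the unique element $\tw \in (\ZZ) \cup (\PZZ)$ such that $\mathtt{dec}_Z(\tw)=d$. We read off from $d$ four pieces of data: the first letter $\ts_i$ and the last letter $\ts_j$ of $\tw$, the numbers $p$ and $q$ of occurrences of $\ts_1$ and $\ts_{n+1}$ in $H(\tw)$, and the linear order in which these $\ts_1$'s and $\ts_{n+1}$'s interleave.

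First we recover $i$ and $j$: since ${\bf a}(d)=1$, the diagram $d$ contains a unique non-propagating edge on its north face, of the form $(i,i+1)$, except when an initial $\bullet$ on the edge $(1,2)$ (coming from step (ZZ3)) forces $i=1$, or an initial $\circ$ on $(n+1,n+2)$ forces $i=n+1$; a symmetric inspection of the south face and of the possible $\bullet$ on $(1',2')$ or $\circ$ on $((n+1)',(n+2)')$ (coming from step (ZZ4)) produces $j$. Next we set $p$ equal to the total number of $\tn$ and $\bullet$ decorations appearing on the first propagating edge of $d$, and $q$ equal to the total number of $\tb$ and $\circ$ decorations appearing on the last propagating edge; by steps (ZZ2)--(ZZ4) these are exactly the sought cardinalities. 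Finally, the vertical positions of these decorations, which alternate between the black and white families, record the relative order of the occurrences of $\ts_1$ and $\ts_{n+1}$ in $H(\tw)$; in particular the colour of the top-most decoration tells us whether, starting from $\ts_i$, the zigzag $\tw$ first moves toward $\ts_1$ or toward $\ts_{n+1}$.

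These four data single out a unique finite factor $\tw$ of the infinite word $(\ts_1\ts_2\cdots \ts_{n+1}\ts_n\cdots \ts_2\ts_1)^\infty$: starting from $\ts_i$ and moving in the prescribed direction, we bounce at $\ts_1$ and $\ts_{n+1}$ each time we reach them until exactly $p$ copies of $\ts_1$ and $q$ copies of $\ts_{n+1}$ have been used, and we stop at the first occurrence of $\ts_j$ afterwards. Since $d$ contains at least one $\tn$ and at least one $\tb$, we have $p,q\ge 1$, so $\tw$ contains both factors $\ts_2\ts_1\ts_2$ and $\ts_n\ts_{n+1}\ts_n$ and therefore lies in $(\ZZ)\cup (\PZZ)$. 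Running Definition~\ref{algdeczz} on this $\tw$ now reproduces $d$: the undecorated diagram is $d_{\tau(\tw)}=d_{s_i s_{i\pm 1}\cdots s_j}$ by fork elimination and matches the undecorated part of $d$; steps (ZZ2)--(ZZ4) produce exactly $p$ black and $q$ white markers on the correct edges; and step (ZZ5) assigns them the prescribed vertical positions. The main technical obstacle is the boundary case analysis in the preceding paragraph, where an extremal $\bullet$ or $\circ$ decoration plays the double role of identifying $i$ or $j$ and of contributing to the count of $\ts_1$ or $\ts_{n+1}$; once this bookkeeping is carried out, the construction is manifestly an inverse of $\mathtt{dec}_Z$ on the set of $\mathcal{Z}$-diagrams, proving surjectivity.
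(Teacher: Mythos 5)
Your construction is essentially the paper's own proof: both read off from the $\mathcal{Z}$-diagram the endpoints $i,j$ of the reduced word, the counts of $\ts_1$- and $\ts_{n+1}$-occurrences from the black and white decorations on the extreme propagating edges, and the colour of the top-most and bottom-most decorations to fix the bouncing pattern, then reassemble the unique zigzag or pseudo-zigzag heap with these data and check that the decoration algorithm returns $d$. Your extra bookkeeping for the boundary cases $i\in\{1,n+1\}$ (where a $\bullet$ or $\circ$ replaces an extremal $\tn$ or $\tb$) is a welcome clarification of a point the paper leaves implicit, but it does not change the argument.
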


\begin{proof}
Let $d$ be a $\mathcal{Z}$-diagram with a non-propagating edge $(i,i+1)$ on the north face, $(j',(j+1)')$ on the south face and with $p$ $\tn$ decorations on the edge $(1,1')$ and $q$ $\tb$ decorations on $(n+2,(n+2)')$. We know that $q=p$ or $q=p\pm 1$ and that their vertical positions alternate. If the highest decoration is: 
\begin{itemize}
    \item $\tn$ then set $H_{start}:=H(\ts_i \ts_{i-1}\cdots \ts_1)$;
    \item $\tb$ then set $H_{start}:=H(\ts_i \ts_{i+1}\cdots \ts_{n+1})$.
\end{itemize}
If the lowest decoration is:
\begin{itemize}
    \item $\tn$ then set $H_{end}:=H(\ts_1 \ts_{2}\cdots \ts_j)$;
    \item $\tb$ then set $H_{end}:=H(\ts_{n+1} \ts_{n}\cdots \ts_{j})$.
\end{itemize}
Now connect $H_{start}$ with $H_{end}$ by the unique down chain made of descending segments going back and forth from column $1$ to column $n+1$ or viceversa in a way that the final heap $H$ is a zigzag or a pseudo-zigzag  having $p$ nodes labeled by $\ts_1$ and $q$ nodes labeled  by $\ts_{n+1}$. 
It is easy to see that $\mathtt{dec}_Z$ is $d$. 
\end{proof}

\begin{figure}[h]
	\centering
	\includegraphics[width=0.9\linewidth]{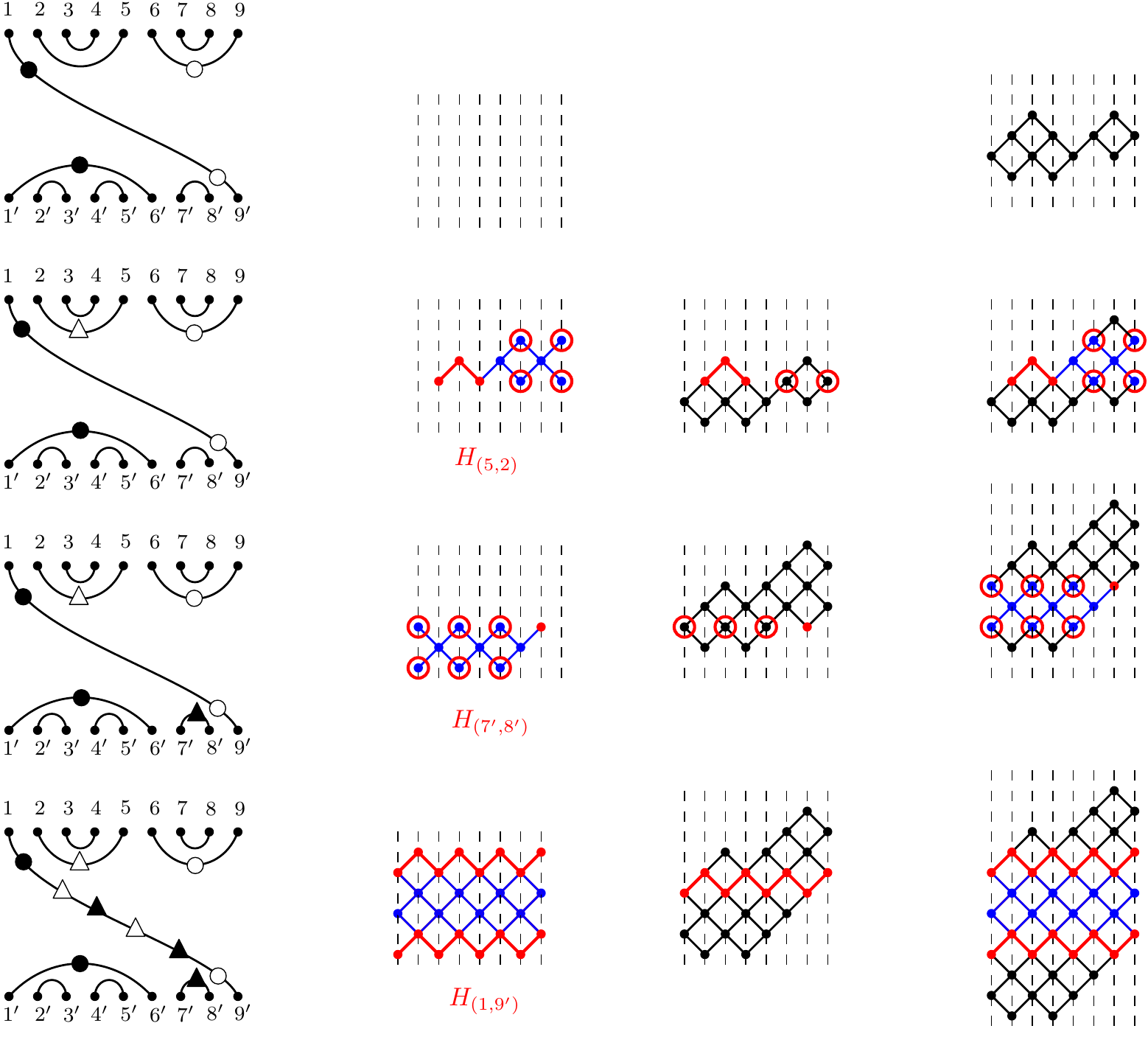}
	\caption{}
	\label{Surj-2}
\end{figure}


\end{document}